\newcolumntype{P}[1]{>{\centering\arraybackslash}p{#1}}
\newtheorem{theorem}{Theorem}
\newtheorem{lemma}{Lemma}
\newtheorem{definition}{Definition}
\newtheorem{remark}{Remark}
\newcommand{\R}{\mathds{R}}
\newcommand{\0}{\mathbf{0}}
\newcommand{\1}{\mathbf{1}}
\newcommand{\ee}{\mathbf{e}}
\newcommand{\sve}{\mathbf{s}}
\newcommand{\xx}{\mathbf{x}}
\newcommand{\yy}{\mathbf{y}}
\newcommand{\uu}{\mathbf{u}}
\newcommand{\vv}{\mathbf{v}}
\newcommand{\ww}{\mathbf{w}}
\newcommand{\zz}{\mathbf{z}}
\newcommand{\Tr}{\mathrm{Tr}}
\newcommand{\kappaE}{\ensuremath{\kappa_{\rm E}}}
\newcommand{\kappaT}{\ensuremath{\kappa_{\rm T}}}
\newcommand{\E}{\ensuremath{\mathbb{E}}}
\newcommand{\cov}{\mathop{\rm cov}}
\newcommand{\argmin}{\mathop{\rm arg\,min}}
\newcommand{\Sym}{\mathop{\rm SYM}}
\newcommand{\Pd}{\mathop{\rm PD}}
\newcommand{\ls}{\ensuremath{\texttt{\textsc{LS}}}}
\newcommand{\als}{\ensuremath{\texttt{\textsc{ALS}}}}
\providecommand{\abs}[1]{\left\lvert#1\right\rvert}
\providecommand{\norm}[1]{\left\lVert#1\right\rVert}
\newcommand{\lin}[1]{\ensuremath{\left\langle #1 \right\rangle}}
\newcommand{\normal}{\mathcal{N}}
\DeclareMathOperator{\expect}{\mathbb{E}}
\newcommand{\reals}{\mathds{R}}
\begin{document}
\title{Variable Metric Random Pursuit\footnotemark[1]}  
\author{Sebastian U. Stich\footnotemark[2] \and C.~L. M\"{u}ller\footnotemark[4] \and Bernd G\"{a}rtner\footnotemark[3]}

\footnotetext[1]{The project CG Learning acknowledges the financial support of the Future and Emerging Technologies (FET) programme within the Seventh Framework Programme for Research of the European Commission, under FET-Open grant number: 255827.}
\footnotetext[2]{Institute of Theoretical Computer Science, ETH Z\"urich, \texttt{stich@inf.ethz.ch}}
\footnotetext[4]{Institute of Theoretical Computer Science, ETH Z\"urich, \texttt{cm192@nyu.edu}}
\footnotetext[3]{Institute of Theoretical Computer Science, ETH Z\"urich, \texttt{gaertner@inf.ethz.ch}}

\date{September 30, 2014}

\maketitle



\begin{abstract}
We consider unconstrained randomized optimization of smooth convex objective functions in the gradient-free setting.
We analyze Random Pursuit (RP) algorithms with fixed (F-RP) and variable metric (V-RP). The algorithms only use zeroth-order information about the objective function and compute an approximate solution by repeated optimization over randomly chosen one-dimensional subspaces. The distribution of search directions is dictated by the chosen metric. 
Variable Metric RP uses novel variants of a randomized zeroth-order Hessian approximation scheme recently introduced by Leventhal and Lewis (D. Leventhal and A. S. Lewis., Optimization 60(3), 329--245, 2011). We here present (i) a refined analysis of the expected single step progress of RP algorithms and their global convergence on (strictly) convex functions and (ii) novel convergence bounds for V-RP on strongly convex functions. We also quantify how well the employed metric needs to match the local geometry of the function in order for the RP algorithms to converge with the best possible rate. 
Our theoretical results are accompanied by numerical experiments, comparing V-RP with the derivative-free schemes CMA-ES, Implicit Filtering, Nelder-Mead, NEWUOA, Pattern-Search and Nesterov's gradient-free algorithms.

\end{abstract}

\section{Introduction}
Since its inception by Davidon in the late 1950's \cite{Davidon:1991} variable metric methods have become a cornerstone in first-order (non-)convex continuous optimization. Among the many instances of variable metric schemes Quasi-Newton methods such as the BFGS scheme \cite{Broyden:1970,Fletcher:1970,Goldfarb:1970,Shanno:1970} are ubiquitous in all areas of science and engineering. In zeroth-order (or gradient-free) optimization, the idea of using a variable metric guiding the search for local or global optima has surprisingly been used to a far less extent. Although ``directional adaptation" has been conjectured to be useful for randomized gradient-free schemes in the late 1960's~\cite{Schumer:1968} the literature on this topic is scarce and scattered across different communities ranging from electrical engineering, optimal control, bio-inspired optimization to mathematical programming. Important examples include the Gaussian Adaptation algorithm developed by Kjellstr\"om and Taxen~\cite{Kjellstrom:1981,Mueller:2010a} in the context of analog circuit design, Marti's controlled random search schemes using concepts from optimal control \cite{Marti:1986}, and the arguably most popular scheme, Hansen's Evolution Strategy with Covariance Matrix Adaptation (CMA-ES)~\cite{Hansen:2001} that emerged in the bio-inspired optimization community. 

Despite their great appeal in practice many randomized gradient-free variable metric schemes lack a thorough theoretical convergence analysis. A marked exception is Leventhal and Lewis' recent work on Randomized Hessian approximation (RHE)~\cite{Leventhal:2011}. We here adopt some of their ideas and extend our framework of Random Pursuit (RP)~\cite{Stich:2011}, eventually leading to Variable Metric Random Pursuit (V-RP) schemes. We solely consider optimization problems of the kind:
\begin{align}
 \label{eq:problem}
 \min f(\xx) \quad \text{subject to} \quad \xx \in \R^n \, , 
\end{align}
where $f$ is a smooth convex function. We assume that there is a
global minimum and that the curvature of the function $f$ is bounded from above. Moreover, we assume 
that we have only access to function values of $f$. No analytic gradient or higher order information 
about $f$ is available.

To motivate Variable Metric Random Pursuit, let us first sketch the working mechanism 
of standard Random Pursuit on an illustrative example. Each iteration of standard 
Random Pursuit consists of two steps: (i) a random direction is sampled from an 
isotropic probability distribution; (ii) the next iterate is chosen such as to (approximately) 
minimize the objective function along this direction.
In~\cite{Stich:2011} we have shown that the expected error in function
value decreases by a factor of
$\left(1-\frac{m}{n\ell}\right)$
in every step, if $m>0$ and $\ell>0$ are parameters of quadratic
functions that bound the difference between $f$ and any of its linear
approximations from below and above\footnote{The left inequality is usually referred to as strong-convexity; the right one follows from Lipschitz continuity of the gradient. See Section~\ref{sec:notation}.}; more precisely,
\begin{equation}
\label{eq:standard_rp}
\frac{m}{2}\norm{\yy-\xx}^2 \leq \ell_{\xx}(\yy):= f(\yy)-f(\xx)-\lin{\nabla f(\xx),\yy-\xx}\leq
\frac{\ell}{2}\norm{\yy-\xx}^2
\end{equation}
is assumed to hold for all $\xx,\yy$. For twice differentiable functions this condition is equivalent to an uniform lower and upper bound on the Hessian $H(\xx)$: $m \preceq H(\xx) \preceq \ell$.
As an example, let us consider the function 
\[
f_0(x_1,x_2) = 100x_1^2 + x_2^2,
\]
for which $\ell_{\xx}(\yy) = 100(x_1-y_1)^2 + (x_2-y_2)^2$. This means
that $m=2$ and $\ell=200$ are the best possible parameters in
(\ref{eq:standard_rp}), and the progress rate  in every step is no
better than $(1-1/200)$. This also matches our intuition: every
level set of $f_0$ is a long and skinny ellipse, stretching out along
the $x_2$-axis; if we start from a point close to the $x_2$ axis, the
progress in a step will be small, unless we almost sample in $x_2$-direction.

For this particular function $f_0$, it would be better to sample from an 
anisotropic distribution that favors the $x_2$-direction. Once we fix such 
an anisotropic sampling distribution, however, other functions become ``bad''; 
in fact, without prior knowledge about $f$,
anisotropic sampling makes no sense at all. Here is where 
the ``variable metric'' approach comes in. The idea is
to gradually \emph{adapt} the sampling distribution to the function
$f$ while we run the algorithm. Suppose that we can somehow estimate
the Hessians at the various iterates. Under the assumption that $f$ is
wedged between two quadratic functions---whose Hessians are not
necessarily multiples of the identity, as in~(\ref{eq:standard_rp})---these estimates will allow us to learn a
suitable metric that guides the sampling distribution. 
In case of $f_0$, we would start with the isotropic one and then converge 
to a distribution that indeed favors the $x_2$-direction with the right proportion.

In this contribution we present a framework for analyzing 
the convergence behavior of Random Pursuit algorithms on convex functions.
In a first step we analyze the Fixed Metric Random Pursuit (F-RP)
algorithm
for fixed (anisotropic) 
sampling distributions. 
%
In a second step we equip Random Pursuit with a randomized
scheme to update the metric that defines the sampling distribution in every step: the Variable Metric 
Random Pursuit. We present precise theoretical analysis of an update scheme recently proposed 
by Leventhal and Lewis \cite{Leventhal:2011} as well as three novel implementations.. These learning schemes are generic in the 
sense that they work for all convex functions and do not require any prior
knowledge of the function's shape. We prove that the 
sampling distribution converges to a distribution that yields 
asymptotically optimal (and function-independent) progress rates.
The proposed schemes are easily parallelizable, thus allowing a computational speed-up of the update schemes on multi-core machines. 

The remainder of the paper is structured as follows. In Section \ref{sec:rp} we give a generic description
of the different Random Pursuit algorithms and their essential building blocks. We introduce all relevant 
mathematical definitions such as matrix upper and lower bounds of convex functions and expressions 
for certain scalar and matrix expectations in Section \ref{sec:notation}. We derive the expected 
single-step progress and global convergence of F-RP in Section \ref{sec:convergence}. 
Section \ref{sec:metriclearning} is dedicated to Variable Metric Random Pursuit. 
We discuss the key results of  
the paper and outline future research goals in Section \ref{sec:discussion}.

\section{Fixed and Variable Metric Random Pursuit}
\label{sec:rp}
All Random Pursuit algorithms are designed for problems 
as in~\eqref{eq:problem}.
Before stating the formal definition of the considered RP algorithms we need to define 
one indispensable primitive.
\begin{definition}[Line search oracle]
\label{def:lineseach}
For $\xx \in \R^n$, a direction $\uu \in \R^n \setminus \{\0\}$ and a convex function $f$, 
a function 
$\ls_f \colon \reals^n \times \R^n \to \reals$ with
%
\begin{align}
\ls_f(\xx,\uu) = \displaystyle \argmin_{h \in \reals} f(\xx + h\uu)   \label{def:exactlinesearch}
\end{align}
is called an \emph{exact line search oracle}. 
\end{definition}

The two RP schemes considered here are summarized in Fig.~\ref{fig:algos}. 
\def \sfactor{0.9}
\begin{figure}[!b]
\centering
\null\hfill %
\scalebox{\sfactor}{
	\begin{minipage}[t]{.51\linewidth}
	     \vspace{0pt} 
	      \begin{algorithm}[H]
	     %
	     %
	      \NoCaptionOfAlgo
	      \SetKwInOut{Output}{Output}
	      \Output{Approximate solution $x_N$ to \eqref{eq:problem}}
	      \SetAlgoVlined
	      \DontPrintSemicolon
	      \SetInd{0.3em}{0.5em}
	      \nl \For{$k=1$ to $N$}{
	      \nl $\uu_{k} \sim \mathcal{N}(\0, \Sigma)$\;
	      \nl $\xx_{k} \leftarrow \ls_f(\xx_{k-1}, \uu_{k})$\;
	      }%
	      \nl \Return{$\xx_N$}\;
	      \caption{{$\texttt{F-RP}(f, \xx_0, \Sigma, N)$}}%
	     \end{algorithm}%
	\end{minipage}  
}%
\null\hfill %
\scalebox{\sfactor}{
	\begin{minipage}[t]{.51\linewidth}
		     \vspace{0pt} 
	      \begin{algorithm}[H]
	     %
	     %
	      \NoCaptionOfAlgo
	      \SetKwInOut{Output}{Output}
	      \Output{Approximate solution $x_N$ to \eqref{eq:problem}}
	      \SetAlgoVlined
	      \DontPrintSemicolon
	      \SetInd{0.3em}{0.5em}
	      \nl \For{$k=1$ to $N$}{
	      \nl $B_k \leftarrow \texttt{updateHess}(f, \xx, B_{k-1})$
	      \nl $\uu_{k} \sim \mathcal{N}(\0, B_k^{-1})$\;
	      \nl $\xx_{k} \leftarrow \ls_f(\xx_{k-1}, \uu_{k})$\;
	      }%
	      \nl \Return{$\xx_N$}\;
	      \caption{{$\texttt{V-RP}(f, \xx_0, B_0, N)$}}%
	     \end{algorithm}%
	\end{minipage}
}%
\hfill\null %
\caption{Fixed Metric Random Pursuit (left panel) and the Variable Metric version (right panel). 
The generic sub-routine $\texttt{updateHess}$ on line 2 exemplifies any function that 
generates the metric $B_k$ in step $k$. Three specific instantiations are discussed 
in Sec.~\ref{sec:metriclearning} (cf. Fig.~\ref{fig:algonew}). \label{fig:algos}}
\end{figure}
In Fixed Metric Random Pursuit (F-RP) a direction $\uu \in \R^n \setminus \{\0\}$ is sampled 
from a multivariate normal distribution with fixed covariance $\Sigma$ at iteration $k$ of 
the algorithm. The next iterate $\xx_{k}$ is calculated from the current iterate $\xx_{k-1}$ as
\begin{align}
\xx_{k} := \xx_{k-1} + \ls_f(\xx_{k-1},\uu) \cdot \uu \,. \label{eq:rponestep}
\end{align}
This algorithm only requires function evaluations in addition to the
line search oracle. No  first or second-order information about the objective is needed. We emphasize that besides the starting point no further input parameters describing function properties (such as curvature constant etc.) are necessary.  
The actual run time will, however, depend on the specific properties of the objective function and on the choice of the covariance matrix $\Sigma$, as detailed in Section~\ref{sec:convergence}. 
%
%
Variable Metric Random Pursuit (V-RP) comprises an independent process that gives 
an approximation of the Hessian at each iteration. The inverse of the Hessian is then 
used as covariance matrix in the multivariate normal distribution to generate the current 
search direction. In principle, any deterministic or randomized gradient-free estimator can 
be used for this purpose. 
In Section \ref{sec:metriclearning} we will use a Randomized Hessian approximation scheme recently proposed in \cite{Leventhal:2011} for this task.


For simplicity we assumed here access to an exact line search oracle. However, approximate line search schemes are sufficient to establish convergence of the Random Pursuit algorithms. We introduce such oracles in Section~\ref{sec:commentls}.

%

%
%
\section{Definitions and Notations}
\label{sec:notation}
We now introduce the  notation and some inequalities that will be useful for the subsequent analysis. Most importantly, we define two classes of convex functions with respect to so-called quadratic norms. This extends the standard model 
and allows us to derive convergence rates that take the eigenvalue spectrum of the Hessian into account.
\subsection{Quadratic norms}
\label{sec:quadnorms}
Let $\Pd_n$ denote the set of symmetric positive definite $n \times n$
matrices. With respect to $A \in \Pd_n$, we can define an 'anisotropic' norm by
 $\norm{\xx}_A^2 := \lin{\xx,\xx}_A$
for $\xx, \yy \in \R^n$. 
In statistics this metric is also known as the Mahalanobis metric. We observe that
\begin{align}
\label{eq:normbound}
\lambda_{\rm min}(A) \norm{\xx}^2 
\leq \norm{\xx}_A^2 \leq \lambda_{\rm max}(A) \norm{\xx}^2\,,
\end{align}
due to $\lambda_{\rm min}(A)=\min\{\xx^TA\xx: \|x\|=1\}$ and
$\lambda_{\rm max}(A)=\max\{\xx^TA\xx: \|x\|=1\}$.
This statement can be generalized, as shown in the following lemma.
\begin{lemma}
\label{lem:AvsBnorm}
Let $A,B\in\Pd_n$ and $\xx\in\R^n$. Then
\begin{align}
\lambda_{\rm min}(B^{-1}A) \norm{\xx}_B^2 \leq \norm{\xx}_{A}^2\leq \lambda_{\rm max}(B^{-1}A) \norm{\xx}_B^2\,. \label{eq:normbound2}
\end{align}
\end{lemma}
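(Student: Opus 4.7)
The plan is to reduce the anisotropic inequality to the standard Rayleigh quotient bound in~\eqref{eq:normbound} via a change of variables. Concretely, I would introduce $\yy := B^{1/2} \xx$, where $B^{1/2}$ is the unique symmetric positive definite square root of $B$. Under this substitution, $\norm{\xx}_B^2 = \xx^T B \xx = \yy^T \yy = \norm{\yy}^2$, while $\norm{\xx}_A^2 = \xx^T A \xx = \yy^T (B^{-1/2} A B^{-1/2}) \yy$. Applying~\eqref{eq:normbound} to the symmetric positive definite matrix $M := B^{-1/2} A B^{-1/2}$ yields
\begin{equation*}
\lambda_{\rm min}(M)\,\norm{\yy}^2 \;\leq\; \yy^T M \yy \;\leq\; \lambda_{\rm max}(M)\,\norm{\yy}^2,
\end{equation*}
which translated back through the substitution gives exactly the desired inequality, provided we can identify the eigenvalues of $M$ with those of $B^{-1}A$.

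The key observation for this identification is that $M = B^{-1/2} A B^{-1/2}$ and $B^{-1}A$ are similar matrices: one checks that $B^{1/2} M B^{-1/2} = B^{-1}A$, so they have the same spectrum. This is the step that makes the statement meaningful, since $B^{-1}A$ is in general not symmetric and one might worry whether $\lambda_{\rm min}(B^{-1}A)$ and $\lambda_{\rm max}(B^{-1}A)$ are even real; similarity with the symmetric positive definite $M$ shows that all eigenvalues of $B^{-1}A$ are in fact real and positive.

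The only mildly delicate point is just this spectral identification and being careful that $B^{1/2}$ is well-defined and invertible (which holds because $B \in \Pd_n$). Once the similarity is noted, the rest is a one-line computation. I would structure the proof as: (i) define $B^{1/2}$ and $\yy = B^{1/2}\xx$; (ii) compute $\norm{\xx}_B^2$ and $\norm{\xx}_A^2$ in terms of $\yy$ and $M$; (iii) apply~\eqref{eq:normbound} to $M$; (iv) replace $\lambda_{\rm min}(M), \lambda_{\rm max}(M)$ by $\lambda_{\rm min}(B^{-1}A), \lambda_{\rm max}(B^{-1}A)$ using similarity.
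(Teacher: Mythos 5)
Your argument is correct and is essentially the paper's own proof: the paper likewise reduces \eqref{eq:normbound2} to \eqref{eq:normbound} via the substitution $\xx=(B^{1/2})^{-1}\yy$ and then identifies the eigenvalues of $(B^{1/2})^{-1}A(B^{1/2})^{-1}$ with those of $B^{-1}A$, merely citing Puntanen et al.\ for both steps that you spell out. One tiny slip: $B^{1/2}MB^{-1/2}=AB^{-1}$ rather than $B^{-1}A$, but since $AB^{-1}$ and $B^{-1}A$ are themselves similar (conjugate by $B$), the spectral identification and the conclusion are unaffected.
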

A proof can be found in~\cite[Prop.~18.3]{Puntanen:2011}.
The substitution $\xx = (B^{1/2})^{-1} \yy$, where $B^{1/2} \in \Pd_n$ denotes the positive semidefinite root of $B$ and $\yy \in \R^n$, allows to reduce~\eqref{eq:normbound2} to~\eqref{eq:normbound}. 
It remains to note that $(B^{1/2})^{-1}A(B^{1/2})^{-1}$ and
$AB^{-1}$ have the same eigenvalues, for this see e.g. again~\cite[Prop.~13.2]{Puntanen:2011}. 
%
%
%
\subsection{Quadratic bounds}
\label{sec:qbounds}
We now define two function classes. We assume that the objective function $f$ in~\eqref{eq:problem} is differentiable and convex. 
The latter property is equivalent to
\begin{align}
\label{def:convex1}
f(\yy) \geq f(\xx) + \lin{\nabla f(\xx), \yy-\xx}, \quad \xx, \yy \in \R^n \, .
\end{align}
We also require that the curvature of $f$ is bounded. However, we allow for 
different curvatures depending on the direction. By this we mean that 
for some fixed symmetric and positive definite matrix $L \in 
\Pd_n$, 
\begin{align}
\label{def:lipschitz}
f(\yy)-f(\xx) - \lin{\nabla f(\xx), \yy-\xx} \leq \frac{1}{2} \norm{\xx-\yy}_{L}^2,%
 \quad \xx,\yy \in \R^n \, .
\end{align}
We will also refer to this inequality as the \emph{(matrix) quadratic 
upper bound}. 
We denote by $C_{L}^1$ the class of (once) differentiable convex functions for which~\eqref{def:lipschitz}
holds with parameter $L$.
%
%
A differentiable function is \emph{strongly convex} with parameter $M 
\in \Pd_n$ if the \emph{(matrix)  quadratic lower bound} 
\begin{align}
\label{def:stronconvex}
f(\yy) - f(\xx) - \lin{\nabla f(\xx), \yy-\xx} \geq \frac{1}{2} \norm{\yy-\xx}_M^2 \,%
 , \quad \xx,\yy \in \R^n \, ,
\end{align}
holds. 
Let $\xx^*$ be the unique minimizer of a strongly convex function $f$ with 
parameter $M$. Then equation~\eqref{def:stronconvex} implies this 
useful relation: 
\begin{align}
\label{eq:quadraticlower_matrix}
\frac{1}{2} \norm{\xx-\xx^*}_M^2 \leq f(\xx) - f(\xx^*) %
 \leq \frac{1}{2}\|\nabla f(\xx)\|_{M^{-1}}^2 \, , %
 \quad \forall \xx \in \R^n \, .
\end{align}
The former inequality uses $\nabla f(\xx^*)=0$, and the latter one follows
from (\ref{def:stronconvex}) via
\begin{align*}
f(\xx^*) %
 &\geq f(\xx) + \lin{\nabla f(\xx), \xx^* \! -\xx} + \frac{1}{2} \norm{\xx^* \! -\xx}_M^2 \\
 &\geq f(\xx)+ \! \min_{\yy\in\R^n} \! \left( \! \lin{\nabla f(\xx), \yy-\xx} %
                        + \frac{1}{2} \norm{\yy -\xx}_M^2 \! \right) %
 \! = f(\xx) -\frac{1}{2}\|\nabla f(\xx)\|_{M^{-1}}^2
\end{align*}
by standard calculus.
\subsection{Sampling distribution}
\label{sec:normal}
Both RP algorithms from Figure~\ref{fig:algos} rely on multivariate normal distributed search directions $\uu \in \R^n$. We write  $\uu \sim \normal(\boldsymbol{\mu}, \Sigma)$ to denote that $\uu \in \R^n$ is multivariate normally distributed with mean $\boldsymbol{\mu} \in \R^n$ and covariance $\Sigma \in \Pd_n$.
As the step sizes in~\eqref{eq:rponestep} are determined by a line search, the actual scaling of $\uu$, i.e. $\norm{\uu}$, is not relevant for the behavior of the algorithm. 
We therefore restrict ourselves to \emph{normalized} search directions.
%
%
%
\begin{definition}[Normalized distribution]
\label{def:nmvn}
Let $\Sigma \in \Pd_n$. We denote by $\bar{\normal}(\0, \Sigma)$ the distribution arising from the image of the normal distribution $\normal(\0, \Sigma)$ under the mapping $T(\mathbf{x}) = \mathbf{x}/{\norm{\mathbf{x}}_{\Sigma^{-1}}}$.
\end{definition}
For example, $\uu \sim \bar{\normal}(\0, I_n)$ denotes the uniform distribution over all unit length vectors subject to the standard Euclidean norm (the uniform distribution on the unit $(n-1)$-sphere). The following lemma summarizes some facts for the normalized distribution.
\begin{lemma}
\label{lemma:factsnormalized}
Let $\vv \sim \bar{\normal}(\0, \Sigma)$ normalized with $\Sigma \in \Pd_n$ and let $A \in \Sym_n$. Then
\begin{align*}
 \E \left[\vv \vv^T \right] &= \frac{\Sigma}{n} \,, &%
 \E \left[\vv^T A \vv \right] &= \frac{\Tr[A\Sigma]}{n}\,, &%
 \E \left[(\vv^T A \vv)^2 \right] &= \frac{\Tr[A \Sigma]^2+ 2 \Tr[(A\Sigma)^2]}{n(n+2)} 
\end{align*}
and for $\xx \in \R^n$,
\begin{align*}
\expect \left[ \lin{\xx,\vv} \vv \right] = \frac{\Sigma \xx}{n} \, , %
 \quad &\text{and} \quad %
 \expect \left[ \norm{\lin{\xx,\vv} \vv}_A^2 \right] = %
         \frac{\Tr[A \Sigma] \norm{\xx}_{\Sigma}^2 %
         + 2 \norm{\xx}_{\Sigma A \Sigma}^2}{n(n+2)} \, .
\end{align*}
\end{lemma}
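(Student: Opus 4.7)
My plan is to reduce everything to moments of the uniform distribution on the Euclidean unit sphere via the change of variables $\vv = \Sigma^{1/2}\ww$. The key observation is that if $\uu \sim \normal(\0, I_n)$, then $\Sigma^{1/2}\uu \sim \normal(\0,\Sigma)$, and
\[
\norm{\Sigma^{1/2}\uu}_{\Sigma^{-1}}^2 = \uu^T \Sigma^{1/2}\Sigma^{-1}\Sigma^{1/2} \uu = \norm{\uu}^2.
\]
Hence the $\Sigma^{-1}$-normalization in Definition~\ref{def:nmvn} is just the standard Euclidean normalization applied before the $\Sigma^{1/2}$-rotation, and we may write $\vv = \Sigma^{1/2}\ww$, where $\ww$ is uniform on the unit $(n-1)$-sphere.

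For $\ww$ uniform on the sphere, I would invoke the two classical moment identities
\[
\E[\ww\ww^T] = \frac{I_n}{n}, \qquad \E[w_i w_j w_k w_l] = \frac{\delta_{ij}\delta_{kl} + \delta_{ik}\delta_{jl} + \delta_{il}\delta_{jk}}{n(n+2)}.
\]
The first one follows immediately from symmetry (the matrix $\E[\ww\ww^T]$ commutes with all orthogonal transformations, so it is a scalar multiple of $I_n$, and the constant is fixed by taking traces). The second one follows similarly by symmetry: the fourth-order tensor $\E[\ww^{\otimes 4}]$ is invariant under orthogonal transformations, hence a linear combination of the three symmetric pairings of indices, and the coefficient is pinned down by taking $i=j$, $k=l$ and using $\Tr(\E[\ww\ww^T])=1$.

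With these in hand, the first two identities are immediate: $\E[\vv\vv^T] = \Sigma^{1/2}\E[\ww\ww^T]\Sigma^{1/2} = \Sigma/n$, and the second follows by $\E[\vv^T A \vv] = \Tr(A\,\E[\vv\vv^T]) = \Tr[A\Sigma]/n$. For the third identity I would compute
\[
\E[(\vv^T A \vv)^2] = \E[(\ww^T M \ww)^2], \qquad M := \Sigma^{1/2} A \Sigma^{1/2},
\]
expand the product using the fourth-moment formula, and collect the three terms as $\Tr[M]^2 + 2\Tr[M^2]$; then I use $\Tr[M]=\Tr[A\Sigma]$ and $\Tr[M^2]=\Tr[(A\Sigma)^2]$ by the cyclic property of the trace. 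The fourth identity, $\E[\lin{\xx,\vv}\vv] = \E[\vv\vv^T]\xx = \Sigma\xx/n$, reduces to the first. For the last identity I would write
\[
\E\bigl[\norm{\lin{\xx,\vv}\vv}_A^2\bigr] = \E\bigl[(\xx^T\vv)^2(\vv^T A \vv)\bigr] = \E\bigl[(\yy^T\ww)^2(\ww^T M \ww)\bigr],
\]
with $\yy = \Sigma^{1/2}\xx$, and expand using the fourth-moment formula. The three pairings give precisely $\norm{\yy}^2 \Tr[M] + 2\yy^T M \yy$, which translates back to $\Tr[A\Sigma]\norm{\xx}_\Sigma^2 + 2\norm{\xx}_{\Sigma A\Sigma}^2$.

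The main obstacle is clerical rather than conceptual: careful bookkeeping in the fourth-moment computation for the third and fifth identities, specifically ensuring that the three index pairings collapse to the two visible terms because $M=\Sigma^{1/2}A\Sigma^{1/2}$ is symmetric (so two of the pairings coincide). Once the reduction to the sphere is in place, everything else is a clean application of the cyclic trace property.
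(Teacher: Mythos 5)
Your proposal is correct, and all five identities check out (including the contraction bookkeeping: the three pairings of the isotropic fourth-moment tensor collapse to $\Tr[M]^2+2\Tr[M^2]$ and to $\norm{\yy}^2\Tr[M]+2\yy^TM\yy$ precisely because $M=\Sigma^{1/2}A\Sigma^{1/2}$ is symmetric, as you note). Your opening reduction is the same as the paper's: both proofs observe that $\norm{C\uu}_{\Sigma^{-1}}=\norm{\uu}_2$ for $C^2=\Sigma$ and $\uu\sim\normal(\0,I_n)$, so that $\vv$ has the law of $\Sigma^{1/2}$ applied to a uniformly distributed unit vector. Where you diverge is in how the resulting moments are evaluated. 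The paper never writes down the moment tensors of the sphere; instead it keeps everything as a ratio $S/T$ of a Gaussian polynomial over a power of $\norm{\uu}_2$, invokes a lemma of Heijmans to justify $\E[S/T]=\E[S]/\E[T]$ (valid here because the ratio depends only on the direction of $\uu$ and is therefore independent of, hence uncorrelated with, $\norm{\uu}_2$), and then plugs in the Gaussian quadratic-form moments from Mathai--Provost for the numerators and the $\chi^2$ moments $\E[\norm{\uu}_2^2]=n$, $\E[\norm{\uu}_2^4]=n(n+2)$ for the denominators. You instead compute the second and fourth moments of the uniform distribution on the sphere directly from orthogonal invariance (the isotropic-tensor classification, with the constant pinned by $\norm{\ww}=1$) and carry out the index contraction by hand. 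Your route is more self-contained --- it needs neither the ratio-expectation lemma nor the external quadratic-form formulas --- at the cost of the explicit tensor bookkeeping; the paper's route outsources exactly that bookkeeping to the cited Gaussian moment identities. Both are complete and correct.
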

The proof can be found on page~\pageref{proof:lemma:normalized} in the appendix.
\subsection{Approximate line search oracles}
\label{sec:commentls}
Access to an exact line search oracle~\eqref{def:exactlinesearch} is typically not required to establish convergence of the RP algorithms. This is of importance in practical applications.
%
Commonly used line search oracles often aim at satisfying the well-known Armijo-Goldstein~\cite{Goldstein:1965,Armijo:1966}, and Wolfe~\cite{Wolfe:1969a,Wolfe:1969b} conditions. These condition measure the quality of a single search step in terms of the squared norm of the gradient. Thus, we also provide an analogous quality criterion in the full quadratic model---in addition to a slightly stronger relative accuracy measure.
\begin{definition}[Approximate line search oracles]
\label{def:rells3}
For $0 \leq \mu \leq 1$, $\xx \in \R^n$, $\uu \in \R^n \setminus \{\0\}$  and a convex function $f$, 
a function 
$\als_f \colon \reals^n \times \R^n \to \reals$ with
\begin{align}
f(\xx + \als_f(\xx,\uu)\cdot \uu) \leq f(\xx) - \mu (f(\xx) - f(\xx + \ls_f(\xx,\uu)\cdot\uu))   \tag{L1}\label{def:rells}
\end{align}
is called an approximate line search oracle with  \emph{relative accuracy} $\mu$.

For a differentiable convex function $f \in C^1_L$, 
a function 
$\als_f \colon \reals^n \times \R^n \to \reals$ with
\begin{align}
f(\xx + \als_f(\xx,\uu)\cdot \uu) \leq f(\xx) - \frac{\mu \lin{\nabla f(\xx,\uu}^2}{2 \norm{\uu}_L^2} \tag{L2}\label{def:rells2}
\end{align}
is called an approximate line search oracle with \emph{sufficient decrease} $\mu$.
\end{definition}
%
As we measure deviations only on a relative and not on an absolute scale, such an inexact line search oracle can efficiently be implemented with binary search (dichotomy), using  function evaluations only. It can easily be seen that the first condition~\eqref{def:rells} is stronger than~\eqref{def:rells2}.
\begin{lemma}[\ref{def:rells}) $\Rightarrow$ (\ref{def:rells2}]
\label{lemma:l1tol2}
Let $\xx \in \R^n$, function $f \in C^1_L$, search direction $\uu \in \R^n \setminus \{\0\}$ and $\als_f$ a line search oracle with relative accuracy $\mu > 0$. Then $\als_f$ satisfies the sufficient decrease condition~\eqref{def:rells2}.
\end{lemma}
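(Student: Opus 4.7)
The plan is to show that the best achievable decrease along the line through $\xx$ in direction $\uu$ is at least $\frac{\lin{\nabla f(\xx),\uu}^2}{2\norm{\uu}_L^2}$, and then to chain this lower bound on the optimal decrease with the relative accuracy guarantee of \eqref{def:rells}.

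Concretely, since $f\in C^1_L$, the matrix quadratic upper bound \eqref{def:lipschitz} gives, for every $h\in\R$,
\begin{equation*}
f(\xx+h\uu) \;\leq\; f(\xx) + h\lin{\nabla f(\xx),\uu} + \frac{h^2}{2}\norm{\uu}_L^2.
\end{equation*}
The right-hand side is a one-dimensional quadratic in $h$ whose minimizer is $h^\star = -\lin{\nabla f(\xx),\uu}/\norm{\uu}_L^2$ (here we may assume $\lin{\nabla f(\xx),\uu}\neq 0$, for otherwise both sides of \eqref{def:rells2} are trivially satisfied by the convention that the decrease is nonnegative). Plugging $h^\star$ in yields
\begin{equation*}
f(\xx+h^\star\uu) \;\leq\; f(\xx) - \frac{\lin{\nabla f(\xx),\uu}^2}{2\norm{\uu}_L^2}.
\end{equation*}
Since $\ls_f(\xx,\uu)$ is by definition the exact minimizer of $h\mapsto f(\xx+h\uu)$, in particular $f(\xx+\ls_f(\xx,\uu)\uu)\le f(\xx+h^\star\uu)$, so the optimal line-search decrease is lower-bounded by $\lin{\nabla f(\xx),\uu}^2/(2\norm{\uu}_L^2)$.

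Finally, using this lower bound inside the hypothesis \eqref{def:rells} gives
\begin{equation*}
f(\xx+\als_f(\xx,\uu)\uu) \;\leq\; f(\xx) - \mu\bigl(f(\xx)-f(\xx+\ls_f(\xx,\uu)\uu)\bigr) \;\leq\; f(\xx) - \frac{\mu\lin{\nabla f(\xx),\uu}^2}{2\norm{\uu}_L^2},
\end{equation*}
which is exactly \eqref{def:rells2}. There is really no obstacle here: the whole content is the textbook observation that smoothness turns a ``multiplicative'' progress guarantee into a ``gradient-squared'' progress guarantee; the only care needed is to use the anisotropic quadratic upper bound with respect to $L$ rather than the isotropic one, so that the $\norm{\uu}_L^2$ appearing in \eqref{def:rells2} matches exactly.
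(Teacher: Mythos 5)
Your proposal is correct and follows essentially the same route as the paper's proof: apply the matrix quadratic upper bound~\eqref{def:lipschitz} along the line, minimize the resulting quadratic in the step size to lower-bound the exact line-search decrease by $\lin{\nabla f(\xx),\uu}^2/(2\norm{\uu}_L^2)$, and then chain this with the relative-accuracy guarantee~\eqref{def:rells}. If anything, your write-up is slightly cleaner, since you state the correct minimizer $h^\star=-\lin{\nabla f(\xx),\uu}/\norm{\uu}_L^2$ and carry the factor $\mu$ through explicitly rather than treating $\mu=1$ first.
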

\begin{proof}
As a simple consequence of~\eqref{def:exactlinesearch} we have 
$f(\xx + \ls_f(\xx, \uu)\cdot \uu) \leq f(\xx + t \uu)$ 
for every $t \in \R$. 
We use the quadratic upper bound~\eqref{def:lipschitz} to derive an upper bound on $f(\xx + t \uu)$. Assume $\mu = 1$. Therefore
\begin{align}
 f(\xx + \ls_f(\xx,\uu) \uu) &\leq f(\xx) + \min_{t \in \R} \left(t \lin{\nabla f(\xx), \uu} + t^2 \frac{1}{2} \norm{\uu}_L^2 \right) \label{eq:quad:continue} 
\end{align}
And the lemma follows by the (now optimal) choice $t = -\frac{\lin{\nabla f(\xx), \uu}}{2 \norm{\uu}_L^2}$. The  general case $\mu < 1$ follows straightforwardly from definition~\eqref{def:rells2}. \qed
\end{proof}
Most of our convergence results hold for both approximate line search oracles, but Theorem~\ref{thm:3} will rely on the stronger oracle~\eqref{def:rells}. We would like to remark that our convergence results to more general settings. For instance, we can vary the accuracy parameter $\mu$ in every iteration as long as $\mu$ stays positive, or the distribution of $\mu$ is independent of $\xx$ and $\uu$ (see also the concrete implementations in Section~\ref{sec:imple}).

So far, we did not discuss line search oracles with \emph{absolute} errors. We will comment on such oracles in Section~\ref{sec:lsabsolute} below.
%
%
%
\subsection{Convergence factors}
\label{sec:factors}
The following notation will be useful to formulate the convergence results form Section~\ref{sec:convergence} below.
The condition number $\kappa(A)$ of a positive definite matrix $A \in \Pd_n$ is defined as the ratio $\kappa(A):= \frac{\lambda_{\rm max}(A)}{\lambda_{\rm min}(A)}$ of the two most extreme eigenvalues. The quantities that we introduce now, can be viewed as a generalization of this concept. 
For $A,B,C,D \in \Pd_n$, and $\yy \in \R^n$ let
\begin{align}
\label{def:kappaE}
\kappaE(A,B,C,\yy) &:= \frac{ \Tr[AB] \sigma_{A,B}(\yy) + 2}{\lambda_{\rm \min}(C)(n+2)}\,, 
& \sigma_{A,B}(\yy):= \frac{\norm{\yy}_{(ABA)^{-1}}^2}{\norm{\yy}^2_{A^{-1}}} \,,
\end{align}
and
\begin{align}
\label{def:kappaT}
\kappaT(D,C) &:= \frac{\Tr[D] \lambda_{\rm min}^{-1}(D)  + 2}{\lambda_{\rm \min}(C) (n+2)}\,.
\end{align}
For brevity, we abbreviate $\kappaT(D):= \kappaT(D,I_n)$.
\begin{lemma}
\label{rem:compare}
Let $A,B,C \in \Pd_n$, and $\yy \in \R^n$. Then
\begin{align*}
0 < \kappaE(A,B,C,\yy) \leq \kappaT(AB,C) \leq \frac{\frac{1}{n}\Tr[AB] \lambda_{\rm min}^{-1}(AB)}{\lambda_{\rm \min}(C)} \leq \frac{\kappa(AB)}{\lambda_{\rm \min}(C)}\,.
\end{align*}
\end{lemma}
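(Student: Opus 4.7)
\textbf{Proof plan for Lemma~\ref{rem:compare}.} The claim is a chain of four inequalities; I would verify them from left to right, each reducing to a standard fact about the eigenvalues of products of positive definite matrices. The central observation I would use throughout is that although $AB$ is in general not symmetric, it is similar to the positive definite matrix $A^{1/2}BA^{1/2}$, so its eigenvalues are real, positive, and equal to those of $A^{1/2}BA^{1/2}$. In particular $\lambda_{\min}(AB)>0$, $\Tr[AB]=\Tr[A^{1/2}BA^{1/2}]>0$, and $\lambda_{\max}((AB)^{-1})=1/\lambda_{\min}(AB)$.

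First, $\kappaE(A,B,C,\yy)>0$ is immediate: every factor appearing in the numerator and denominator of \eqref{def:kappaE} is non-negative, the constant $2$ in the numerator and $\lambda_{\min}(C)(n+2)$ in the denominator are strictly positive. The second inequality $\kappaE(A,B,C,\yy)\leq \kappaT(AB,C)$ reduces, after clearing the common denominator $\lambda_{\min}(C)(n+2)$, to the scalar estimate $\sigma_{A,B}(\yy)\leq \lambda_{\min}^{-1}(AB)$. I would prove this by the substitution $\yy=A^{1/2}\zz$, which transforms
\[
\sigma_{A,B}(\yy)=\frac{\yy^{T}(ABA)^{-1}\yy}{\yy^{T}A^{-1}\yy}
\]
into the Rayleigh quotient $\zz^{T}A^{-1/2}B^{-1}A^{-1/2}\zz / \zz^{T}\zz$. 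This is bounded above by $\lambda_{\max}(A^{-1/2}B^{-1}A^{-1/2})$, and since $A^{-1/2}B^{-1}A^{-1/2}$ is similar to $(AB)^{-1}$ (the argument already invoked after Lemma~\ref{lem:AvsBnorm}), its largest eigenvalue equals $1/\lambda_{\min}(AB)$, giving the bound.

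Next, for $\kappaT(AB,C)\leq \frac{1}{n}\Tr[AB]\lambda_{\min}^{-1}(AB)/\lambda_{\min}(C)$, I would cancel $\lambda_{\min}(C)$ and cross-multiply to reduce the claim to $n(\Tr[AB]\lambda_{\min}^{-1}(AB)+2)\leq (n+2)\Tr[AB]\lambda_{\min}^{-1}(AB)$, i.e.\ $n\leq \Tr[AB]\lambda_{\min}^{-1}(AB)$. Since the eigenvalues of $AB$ are all at least $\lambda_{\min}(AB)$, summing gives $\Tr[AB]\geq n\lambda_{\min}(AB)$, which is exactly what is needed. The final inequality $\frac{1}{n}\Tr[AB]\lambda_{\min}^{-1}(AB)\leq \kappa(AB)$ rearranges to $\Tr[AB]\leq n\lambda_{\max}(AB)$, which is trivial from the spectral decomposition.

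No step looks genuinely hard; the only subtlety (and thus the main thing to be careful about) is that $AB$ is not symmetric, so one must justify once and for all that its eigenvalues behave like those of a symmetric positive definite matrix via the similarity $AB\sim A^{1/2}BA^{1/2}$. Once that is invoked, both the Rayleigh-quotient step in the second inequality and the eigenvalue summation in the third become routine.
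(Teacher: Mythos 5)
Your proposal is correct and follows essentially the same route as the paper: positivity via the similarity $AB\sim A^{1/2}BA^{1/2}$, the bound $\sigma_{A,B}(\yy)\leq\lambda_{\min}^{-1}(AB)$ via a Rayleigh-quotient/similarity argument (the paper invokes Lemma~\ref{lem:AvsBnorm}, which is proved by exactly your substitution), and elementary eigenvalue-sum estimates for the last two inequalities. If anything, you are slightly more explicit than the paper in verifying $\Tr[AB]\geq n\lambda_{\min}(AB)$, the hypothesis needed for the third inequality, which the paper's $\frac{a+c}{b+c}\leq\frac{a}{b}$ trick uses implicitly.
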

\begin{proof}
We show the inequalities one by one. For the first one it is enough to show that $\Tr[AB]$ is positive. Let $A^{1/2} \in \Pd_n$ denote the positive definite root of $A$. Then $AB$ and $A^{1/2}BA^{1/2} \in \Pd_n$ have the same eigenvalues, as already mentioned in Section~\ref{sec:quadnorms} above, see e.g.~\cite[Prop.~13.2]{Puntanen:2011}. 
For the second one we use Lemma~\ref{lem:AvsBnorm} to find a uniform upper bound on $\sigma_{A,B}(\yy)$:
\begin{align}
\label{eq:error2}
\sigma_{A,B}(\yy) = \frac{\norm{\yy}_{(ABA)^{-1}}^2}{\norm{\yy}^2_{A^{-1}}} \leq \lambda_{\rm max}(A^{-1} B^{-1}) = \lambda_{\rm min}^{-1}(AB)\,.
\end{align}
For $a\geq b > 0$ it holds $\frac{a+c}{b+c} \leq \frac{a}{b}$ for any $c \geq 0$. Therefore, the choice $a = \Tr[AB] \lambda_{\rm min}^{-1}(AB)$, $b =n$ and $c= 2$ implies the third inequality. The last one is trivial.
\qed
\end{proof}
\section{Convergence of Fixed Metric Random Pursuit}
\label{sec:convergence}

We will now derive the global convergence rates for Algorithm~F-RP on convex and strongly convex functions. To prepare the proof, we first study the expected progress in a single step,
which is the quantity

\[
f(\xx_k) - \E \left[ f(\xx_{k+1}) \mid \xx_k \right] \, .
\]
For the first two theorems, it suffices to assume access to an approximate line search oracle with~\eqref{def:rells2}. However, for Theorem~\ref{thm:3}, the stronger~\eqref{def:rells} is required.
\subsection{Single step progress}
Once a search direction is determined, the subsequent iterate is chosen according to~\eqref{eq:rponestep}. As the step size is determined by the line search oracle, we can derive the following lower bound on the single step progress.
\label{sec:singlestep}
\begin{lemma}[Single step progress of~\eqref{def:rells2}]
\label{lemma:new_onestep}
Let $f \in C_{L}^1$, $\xx \in \R^n$ such that $\nabla f(\xx) \neq \0$, covariance $\Sigma \in \Pd_n$ direction $\uu \sim \bar{\normal}(\0, \Sigma)$, and $\als$ an approximate line search oracle~\eqref{def:rells2} with sufficient decrease $0 \leq \mu \leq 1$ and let
$\xx_+ = \xx + \als_f(\xx, \uu)\cdot \uu$ the next iterate after one step of Algorithm F-RP. Then
\begin{align*}
\E_\uu \left[ f(\xx_+) \mid \xx \right] 
&= f(\xx) - \frac{\mu}{2 n \kappaE(L,\Sigma,I_n, \nabla f(\xx))} \norm{ \nabla f(\xx)}^2_{L^{-1}} \\ &\leq f(\xx) - \frac{\mu}{2 n \kappaT(L\Sigma, I_n)} \norm{ \nabla f(\xx)}^2_{L^{-1}} \,.
\end{align*}
where 
$\kappaE$ and $\kappaT$ 
as in Section~\ref{sec:factors}.
\end{lemma}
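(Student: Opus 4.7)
The plan is to reduce the lemma to a single Cauchy--Schwarz estimate applied to a carefully chosen auxiliary linear functional, after which everything follows from the moment identities collected in Lemma~\ref{lemma:factsnormalized} and the comparison in Lemma~\ref{rem:compare}.

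First I would use the defining property of the oracle \eqref{def:rells2} pointwise: writing $\gg := \nabla f(\xx)$,
\[
f(\xx_+) \leq f(\xx) - \frac{\mu}{2}\cdot\frac{\lin{\gg,\uu}^2}{\norm{\uu}_L^2},
\]
so taking conditional expectation over $\uu$ reduces the proof to establishing the lower bound
\[
\E_{\uu}\!\left[\frac{\lin{\gg,\uu}^2}{\norm{\uu}_L^2}\right] \;\geq\; \frac{\norm{\gg}_{L^{-1}}^2}{n\,\kappaE(L,\Sigma,I_n,\gg)}.
\]
The ratio inside the expectation is the main obstacle: neither the numerator nor the denominator has a simple product structure with $\uu$, and $\E[\lin{\gg,\uu}]=0$ makes the naive Cauchy--Schwarz $\E[X^2/Y]\geq \E[X]^2/\E[Y]$ vacuous.

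My key step is to apply Cauchy--Schwarz in the form
\[
\E\!\left[\frac{\lin{\gg,\uu}^2}{\norm{\uu}_L^2}\right]\cdot \E\!\left[\norm{\uu}_L^2\lin{\hh,\uu}^2\right] \;\geq\; \E\!\left[\lin{\gg,\uu}\lin{\hh,\uu}\right]^2
\]
for a to-be-chosen deterministic vector $\hh\in\R^n$ (essentially Cauchy--Schwarz with the pairing $\tfrac{\lin{\gg,\uu}}{\norm{\uu}_L}$ and $\norm{\uu}_L\lin{\hh,\uu}$). The right-hand side evaluates by the first identity of Lemma~\ref{lemma:factsnormalized} to $(\lin{\gg,\hh}_\Sigma/n)^2$, and the second factor on the left is the fourth-moment identity
\[
\E\!\left[\norm{\uu}_L^2\lin{\hh,\uu}^2\right] \;=\; \frac{\Tr[L\Sigma]\,\norm{\hh}_\Sigma^2+2\norm{\hh}_{\Sigma L\Sigma}^2}{n(n+2)}.
\]

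The choice $\hh := \Sigma^{-1}L^{-1}\gg$ is tailored to produce $\norm{\gg}_{L^{-1}}^2$ from the inner product: one checks $\lin{\gg,\hh}_\Sigma = \gg^{\!\top}L^{-1}\gg = \norm{\gg}_{L^{-1}}^2$, and similarly $\norm{\hh}_\Sigma^2 = \norm{\gg}_{(L\Sigma L)^{-1}}^2$ together with $\norm{\hh}_{\Sigma L\Sigma}^2 = \norm{\gg}_{L^{-1}}^2$. Plugging in and rearranging gives
\[
\E\!\left[\frac{\lin{\gg,\uu}^2}{\norm{\uu}_L^2}\right] \;\geq\; \frac{(n+2)\,(\norm{\gg}_{L^{-1}}^2)^2}{n\bigl(\Tr[L\Sigma]\,\norm{\gg}_{(L\Sigma L)^{-1}}^2+2\norm{\gg}_{L^{-1}}^2\bigr)}.
\]
Factoring $\norm{\gg}_{L^{-1}}^2$ out of the denominator recognises $\sigma_{L,\Sigma}(\gg)$ from \eqref{def:kappaE} and produces exactly $\norm{\gg}_{L^{-1}}^2/(n\kappaE(L,\Sigma,I_n,\gg))$, yielding the first inequality of the lemma.

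Finally the second inequality $\kappaE(L,\Sigma,I_n,\gg)\leq \kappaT(L\Sigma,I_n)$ is precisely the first comparison in Lemma~\ref{rem:compare}, so it is immediate. I expect the only delicate bookkeeping to be verifying that the particular $\hh$ reduces $\norm{\hh}_\Sigma^2$ and $\norm{\hh}_{\Sigma L\Sigma}^2$ to the quantities appearing in $\kappaE$; once this is checked the rest is a clean substitution.
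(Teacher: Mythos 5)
Your proof is correct and is essentially the paper's own argument: the paper lower-bounds the ratio $\lin{\yy,\uu}^2/\norm{\uu}_L^2$ (with $\yy=\nabla f(\xx)$) by $\max_h\bigl(2h\lin{(L\Sigma)^{-1}\yy,\uu}\lin{\yy,\uu}-h^2\norm{\lin{(L\Sigma)^{-1}\yy,\uu}\uu}_L^2\bigr)$, takes expectations inside the maximum via Jensen, and then optimizes $h$ --- which is exactly the discriminant proof of your Cauchy--Schwarz step with the identical test vector $\hh=(L\Sigma)^{-1}\nabla f(\xx)=\Sigma^{-1}L^{-1}\nabla f(\xx)$, followed by the same moment identities from Lemma~\ref{lemma:factsnormalized} and the same appeal to Lemma~\ref{rem:compare}. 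The only point worth flagging is that the ``$=$'' in the first displayed line of the lemma is a typo for ``$\leq$'': both the paper's proof and yours deliver only the inequality, since the oracle condition~\eqref{def:rells2} and the bound on the expectation are both one-sided.
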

\begin{proof}
All we need to find is a lower bound on the conditional expectation
\begin{align}
E_L := \E \left[\frac{\lin{\nabla f(\xx),\uu}^2}{2 \norm{\uu}_L^2} \mid \xx \right]\,, \label{eq:el}
\end{align}
of the expression on the right hand side of~\eqref{def:rells2}. Expressions for such expected values have been derived in the literature (see e.g.~\cite{Mathai:1992}), but no simple closed form solutions exist. As we here only need a lower bound, we can apply the following trick. For fixed $\yy \in \R^n$ and $\uu \in \R^n \setminus \{0\}$ we observe
\begin{align*}
 \frac{\lin{\yy, \uu}^2}{\norm{\uu}_L^2} &= \max_{t} \left(2t \lin{\yy, \uu} - t^2 \norm{\uu}_L^2 \right) \\ 
 &\geq \max_h \left(2 h \lin{(L\Sigma)^{-1} \yy, \uu} \lin{\yy, \uu} - h^2 \norm{ \lin{ (L \Sigma)^{-1} \yy, \uu} \uu}_L^2 \right) \,,
\end{align*}
where the equality follows by standard calculus, and the inequality by suboptimally setting $t = h \lin{(L\Sigma )^{-1}\yy, \uu}$. With Lemma~\ref{lemma:factsnormalized} we can compute the expectation of the terms inside the maximum. We have
\begin{align*}
\E_\uu \bigl[\lin{(L \Sigma)^{-1} \yy, \uu} \lin{\yy, \uu} \mid \xx\bigr] &= \E_\uu \bigl[ \uu^T (L \Sigma)^{-1} \yy \yy^T \uu \mid \xx \bigr] 
= \frac{1}{n} \norm{\yy}_{L^{-1}}^2\,, 
\end{align*}
and
\begin{align*}
\E_\uu \left[  \norm{ \lin{ (L\Sigma)^{-1} \yy, \uu} \uu}_L^2 \mid \xx \right] &= \frac{\Tr[L \Sigma] \norm{\yy}_{(L \Sigma L)^{-1}}^2  + 2 \norm{\yy}^2_{L^{-1}}}{ n (n+2) }\,.
\end{align*}
By Jensen's inequality it is indeed valid to interchange the expectation with the maximum. We have
\begin{align*}
E_L &\geq \max_h \left(2h \frac{\norm{\yy}_{L^{-1}}^2}{n} - h^2 \frac{\Tr[L \Sigma] \norm{\yy}_{(L\Sigma L)^{-1}}^2 + 2 \norm{\yy}_{L^{-1}}^2}{n (n+2)} \right) \\
&\geq \frac{ (n+2) \norm{\yy}_{L^{-1}}^4}{n \bigl(\Tr[L \Sigma] \norm{\yy}_{(L \Sigma L)^{-1}}^2 + 2 \norm{\yy}_{L^{-1}}^2\bigr)} ,,
\end{align*}
where $h$ was chosen to maximize the expression in the bracket, i.e.
\begin{align*}
- (n+2) \norm{\yy)}^2_{L^{-1}} + h \left(\Tr[L \Sigma] \norm{\yy}_{(L\Sigma L)^{-1}}^2  + 2 \norm{\yy}^2_{L^{-1}}\right) = 0\,.
\end{align*}
This choice of $h$ implies the first inequality for $\yy = \nabla f(\xx)$. The second one follows directly from Lemma~\ref{rem:compare}. \qed
\end{proof}

The line search oracle with absolute accuracy~\eqref{def:rells} achieves a single step progress that is as least as good as the bound derived in Lemma~\ref{lemma:new_onestep} above. However, we are more flexible and an can also derive a bound that does not scale directly with $\norm{\nabla f(\xx)}_{L^{-1}}^2$. 
\begin{lemma}[Single step progress of~(\eqref{def:rells}]
\label{lemma:new_onestep2}
Let $f \in C_{L}^1$, $\xx \in \R^n$, covariance $\Sigma \in \Pd_n$ direction $\uu \sim \bar{\normal}(\0, \Sigma)$, and $\als$ an approximate line search oracle~\eqref{def:rells} with relative accuracy $0 \leq \mu \leq 1$ and let
$\xx_+ = \xx + \als_f(\xx, \uu)\cdot \uu$ the next iterate after one step of Algorithm F-RP. In addition, let $\xx^* \in \R^n$ be one of the minimizers of $f$.  Then for every positive $h\geq0$ it holds
\begin{align*}
\E_\uu \left[ f(\xx_+) - f(\xx^*) \mid \xx \right] \leq \! \left( 1-\frac{h \mu}{ n } \right) \left( f(\xx) - f(\xx^*) \right) + \frac{h^2 \mu \kappaT(L\Sigma)}{2 n} \norm{\xx-\xx^*}_{L}^2,,
\end{align*}
where $\kappaT$ as in Section~\ref{sec:factors}.
\end{lemma}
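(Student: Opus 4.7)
The plan is to mimic the structure of Lemma~\ref{lemma:new_onestep}, but now to feed a carefully chosen $t = t(\uu)$ into the quadratic upper bound so that the first-order term produces the progress $f(\xx) - f(\xx^*)$ (via convexity) instead of $\|\nabla f(\xx)\|_{L^{-1}}^2$. The key observation is that the stronger oracle (L1) says $f(\xx_+) \leq f(\xx) - \mu(f(\xx) - f(\xx + \ls_f(\xx,\uu)\uu))$, and since $\ls_f$ is the exact minimizer along $\uu$ we have $f(\xx + \ls_f(\xx,\uu)\uu) \leq f(\xx + t\uu)$ for every $t \in \R$, yielding $f(\xx_+) \leq f(\xx) + \mu (f(\xx + t\uu) - f(\xx))$ pointwise in $\uu$, for any scalar $t$ (possibly depending on $\uu$).

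Next, I would apply the quadratic upper bound \eqref{def:lipschitz} to get $f(\xx + t\uu) - f(\xx) \leq t \lin{\nabla f(\xx), \uu} + \tfrac{t^2}{2}\norm{\uu}_L^2$, and then plug in the specific choice
\begin{align*}
t(\uu) := h \lin{\Sigma^{-1}(\xx^*-\xx), \uu}\,,
\end{align*}
where $h \geq 0$ is the free parameter in the statement. This choice is designed so that taking $\uu$-expectations produces exactly the two quantities $f(\xx)-f(\xx^*)$ and $\norm{\xx-\xx^*}_L^2$. Concretely, the first expectation, using Lemma~\ref{lemma:factsnormalized}, reduces to
\begin{align*}
\E_\uu\bigl[t \lin{\nabla f(\xx),\uu}\bigr] = h\, (\xx^*-\xx)^T \Sigma^{-1} \tfrac{\Sigma}{n} \nabla f(\xx) = \tfrac{h}{n}\lin{\nabla f(\xx), \xx^*-\xx} \leq -\tfrac{h}{n}(f(\xx)-f(\xx^*))
\end{align*}
by convexity. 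For the second expectation I would apply the last identity of Lemma~\ref{lemma:factsnormalized} with $\yy = \Sigma^{-1}(\xx^*-\xx)$ and $A=L$, so that $\norm{\yy}_\Sigma^2 = \norm{\xx-\xx^*}_{\Sigma^{-1}}^2$ and $\norm{\yy}_{\Sigma L\Sigma}^2 = \norm{\xx-\xx^*}_L^2$, giving
\begin{align*}
\E_\uu\bigl[t^2 \norm{\uu}_L^2\bigr] = \tfrac{h^2\bigl(\Tr[L\Sigma]\norm{\xx-\xx^*}_{\Sigma^{-1}}^2 + 2\norm{\xx-\xx^*}_L^2\bigr)}{n(n+2)}\,.
\end{align*}

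Finally, I would convert the $\norm{\cdot}_{\Sigma^{-1}}$ term into $\norm{\cdot}_L$ by applying Lemma~\ref{lem:AvsBnorm} with $A=\Sigma^{-1}$, $B=L$, which gives $\norm{\xx-\xx^*}_{\Sigma^{-1}}^2 \leq \lambda_{\max}(L^{-1}\Sigma^{-1})\norm{\xx-\xx^*}_L^2 = \lambda_{\min}^{-1}(L\Sigma)\norm{\xx-\xx^*}_L^2$. Substituting, the bracket becomes $\bigl(\Tr[L\Sigma]\lambda_{\min}^{-1}(L\Sigma)+2\bigr)\norm{\xx-\xx^*}_L^2$, and dividing by $n+2$ produces exactly $\kappaT(L\Sigma)\norm{\xx-\xx^*}_L^2$ as defined in~\eqref{def:kappaT}. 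Collecting everything, multiplying by $\mu$, and subtracting $f(\xx^*)$ yields the claimed inequality for every $h \geq 0$.

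The routine parts are the two expectation computations via Lemma~\ref{lemma:factsnormalized}. The only step that requires real insight is the choice of the direction $\Sigma^{-1}(\xx^*-\xx)$ inside $t(\uu)$: it is precisely this choice that aligns the first moment with convexity and makes the second moment collapse through Lemma~\ref{lem:AvsBnorm} into the factor $\kappaT(L\Sigma)$. That is where I expect the cleanest writing to demand the most care.
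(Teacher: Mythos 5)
Your proposal is correct and follows essentially the same route as the paper: the suboptimal step $t(\uu)$ proportional to $\lin{\Sigma^{-1}(\xx^*-\xx),\uu}$ fed into the quadratic upper bound, the two expectations from Lemma~\ref{lemma:factsnormalized}, convexity for the first-order term, and Lemma~\ref{lem:AvsBnorm} to collapse the $\Sigma^{-1}$-norm into $\kappaT(L\Sigma)\norm{\xx-\xx^*}_L^2$. Your sign bookkeeping is in fact slightly cleaner than the paper's intermediate display, which writes $t=h\lin{\Sigma^{-1}(\xx-\xx^*),\uu}$ but then carries a minus sign consistent with your choice.
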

\begin{proof}
As in the proof of Lemma~\ref{lemma:l1tol2} we use a supobtimal choice of the unknown optimal value $\ls_f(\xx,\uu)$ together with the quadratic upper bound. Here we use in~\eqref{eq:quad:continue} the value $t= h\lin{\Sigma^{-1} (\xx-\xx^*), \uu}$. This leads to
\begin{align*}
f(\xx_+) \leq f(\xx) - h\mu \lin{\Sigma^{-1} (\xx-\xx^*), \uu}\lin{\nabla f(\xx), \uu} + \frac{h^2 \mu}{2} \norm{\lin{\Sigma^{-1} (\xx-\xx^*), \uu} \cdot \uu}_L^2\,.
\end{align*}
With Lemma~\ref{lemma:factsnormalized} we can again compute the conditional expectation of the terms on the right hand side:
\begin{align*}
\E_\uu [\lin{\Sigma^{-1} (\xx - \xx^*), \uu} \uu \mid \xx] &= \frac{1}{n}\left(\xx-\xx^*\right)\,, \\%
\E_\uu \left[  \norm{\lin{\Sigma^{-1} (\xx - \xx^*), \uu} \cdot \uu}_{L}^2 \mid \xx \right] &= \frac{\Tr[L \Sigma] \norm{\xx-\xx^*}_{\Sigma^{-1}}^2  + 2 \norm{\xx - \xx^*}^2_{L}}{ n (n+2) }\,,
\end{align*}
and obtain
\begin{multline}
\label{eq:onestep}
\E_\uu[ f(\xx_+) \mid \xx] \leq f(\xx) - \frac{h \mu}{ n} \lin{\nabla f(\xx), \xx-\xx^*} \\ + \frac{h^2 \mu }{2 n (n+2)} \left(\Tr[L \Sigma] \norm{\xx-\xx^*}_{\Sigma^{-1}}^2  + 2 \norm{\xx-\xx^*}^2_{L}\right)\,.
\end{multline}
Using the definition of convexity
(see the beginning of Section~\ref{sec:qbounds}) we can bound the term $\lin{\nabla f(\xx), \xx-\xx^*}$ from below by $f(\xx) -f(\xx^*)$.  Finally, we bound the $\Sigma^{-1}$-norm from above with Lemma~\ref{lem:AvsBnorm}. As in the proof of Lemma~\ref{rem:compare} we get
\begin{align*}
\norm{\xx-\xx^*}_{\Sigma^{-1}}^2 &\leq \lambda_{\rm
  max}(L^{-1} \Sigma^{-1}) \norm{\xx-\xx^*}_{L}\,, 
\end{align*}
and the lemma follows from $\lambda_{\rm
  max}(L^{-1} \Sigma^{-1}) = 1/\lambda_{\rm min}(\Sigma L)$. \qed 
\end{proof}

The previous two lemmas shows that, on average, there is progress in every single
step if either $\norm{\nabla f(\xx)}_{L^{-1}}$ or $\norm{\xx-\xx^*}_L^2$ is bounded away from
zero.\footnote{Here we use that $\Tr \left[L \Sigma\right]> 0$; see
  the proof of Lemma~\ref{rem:compare} in Section~\ref{sec:factors}.} 
  This leads us to the next section where we will use the just derived lemmas to prove global convergence.
%
%
\subsection{Global convergence}
We now use the previously derived bounds on the expected
single step progress (Lemma~\ref{lemma:new_onestep} and~\ref{lemma:new_onestep2}) to show convergence of F-RP
in expectation. We first show convergence on smooth but not
necessarily strongly convex functions.
\begin{theorem}    
\label{thm:convex}
Let $f \in C^1_{L}$, let $\xx^* \in \R^n$ be a minimizer of $f$ and let the sequence $\{\xx_k\}_{k \geq 0}$ be generated by Algorithm F-RP with covariance $\Sigma \in \Pd_n$ and line search~\eqref{def:rells2} with sufficient decrease $0< \mu \leq 1$.
Assume there exists $R \in \R$, s.t. $\norm{\yy - \xx_0}_{L} \leq R$ for all $\yy \in \R^n$ with $f(\yy) \leq f(\xx_0)$. Then, for any $N \geq 0$, we have
\begin{align*}
\E \left[ f(\xx_{N}) - f(\xx^*)\right] \leq \frac{Q}{N+1} \,,
\end{align*}
where 
\begin{align*}
Q &:= \max \left\{\frac{2 n R^2 \kappaT(L\Sigma)}{\mu}, f(\xx_0) - f(\xx^*) \right\} \,. & %
\end{align*}
\end{theorem}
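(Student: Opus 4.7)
The plan is to combine the single-step bound of Lemma~\ref{lemma:new_onestep} with a convexity estimate that turns $\|\nabla f(\xx_k)\|_{L^{-1}}^2$ into a function of the suboptimality gap, and then unroll the resulting quadratic recursion.

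First, Lemma~\ref{lemma:new_onestep} gives (conditionally on $\xx_k$)
\begin{equation*}
\E[f(\xx_{k+1}) - f(\xx^*) \mid \xx_k] \leq (f(\xx_k)-f(\xx^*)) - \frac{\mu}{2n\,\kappaT(L\Sigma)} \,\|\nabla f(\xx_k)\|_{L^{-1}}^2 .
\end{equation*}
To relate $\|\nabla f(\xx_k)\|_{L^{-1}}^2$ to $f(\xx_k)-f(\xx^*)$, I would use convexity~\eqref{def:convex1} together with the Cauchy--Schwarz inequality in the form $|\langle a,b\rangle| \leq \|a\|_{L^{-1}}\|b\|_L$ (which follows by writing $\langle a,b\rangle = \langle L^{-1/2}a, L^{1/2}b\rangle$). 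This yields
\begin{equation*}
f(\xx_k)-f(\xx^*) \leq \langle \nabla f(\xx_k),\xx_k-\xx^*\rangle \leq \|\nabla f(\xx_k)\|_{L^{-1}}\,\|\xx_k-\xx^*\|_L .
\end{equation*}

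Next I would invoke the level-set hypothesis. The approximate oracle~\eqref{def:rells2} ensures $f(\xx_{k+1}) \leq f(\xx_k)$ pathwise, so every iterate lies in $\{\yy: f(\yy)\leq f(\xx_0)\}$. By the hypothesis (applied to $\xx_k$ and to $\xx^*$, together with the triangle inequality) we obtain a uniform bound $\|\xx_k - \xx^*\|_L \leq \tilde R$ for some $\tilde R$ of the same order as $R$. Substituting into the two displays above and writing $\phi_k := f(\xx_k)-f(\xx^*)$, we get the conditional recursion
\begin{equation*}
\E[\phi_{k+1}\mid \xx_k] \leq \phi_k - c\,\phi_k^2,\qquad c := \frac{\mu}{2n\,\kappaT(L\Sigma)\,\tilde R^2}.
\end{equation*}

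Taking total expectations and applying Jensen's inequality ($\E[\phi_k^2]\geq (\E[\phi_k])^2 = r_k^2$, where $r_k:=\E\phi_k$) gives the unconditional recursion $r_{k+1}\leq r_k - c r_k^2$. Since $r_{k+1}\leq r_k$, dividing by $r_k r_{k+1}$ and using $(1-x)^{-1}\geq 1+x$ yields $r_{k+1}^{-1} \geq r_k^{-1} + c$, hence $r_N^{-1}\geq r_0^{-1} + cN$, i.e.\ $r_N \leq (r_0^{-1}+cN)^{-1}$. A short case distinction on whether $r_0 \leq c^{-1}$ or $r_0 > c^{-1}$ shows that $r_N \leq Q/(N+1)$ with $Q = \max\{c^{-1}, r_0\}$, which is the claimed form once we identify $c^{-1}$ with $2nR^2\kappaT(L\Sigma)/\mu$ (absorbing the $\tilde R$ vs.~$R$ constants).

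The main obstacle is the step that closes the recursion: the single-step lemma only decreases the gap in terms of $\|\nabla f\|_{L^{-1}}^2$, and without a uniform bound on $\|\xx_k-\xx^*\|_L$ the gradient could shrink much faster than the gap, making the inequality $\phi_k \leq \|\nabla f(\xx_k)\|_{L^{-1}} \tilde R$ vacuous. This is exactly why the level-set diameter $R$ must enter the final constant, and why monotonicity of the line search is essential—if iterates could leave the sublevel set, the bound $\tilde R$ would no longer be available.
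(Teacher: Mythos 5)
Your proposal is correct and follows the same overall strategy as the paper's proof: the single-step bound of Lemma~\ref{lemma:new_onestep}, then convexity plus Cauchy--Schwarz in the $L$-norms to get $f(\xx_k)-f(\xx^*) \leq \norm{\nabla f(\xx_k)}_{L^{-1}}\norm{\xx_k-\xx^*}_L$, then the level-set radius to close a quadratic recursion in the suboptimality gap. Where you diverge is in unrolling that recursion. The paper first \emph{linearizes} pathwise, writing $f_\xx - \tau f_\xx^2 \leq (1-2h)f_\xx + h^2\tau^{-1}$ for a free parameter $h$; because this bound is affine in $f_\xx$, conditional expectations pass through without any convexity argument, and the rate then follows by choosing $h_k = 1/k$ and invoking \cite[Lemma~A.1]{Stich:2011}. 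You instead take total expectations first, use Jensen's inequality $\E[\phi_k^2]\geq(\E\phi_k)^2$ to obtain $r_{k+1}\leq r_k - c r_k^2$ for $r_k=\E\phi_k$, and telescope the reciprocals. Your route is more self-contained (no external lemma) and is the textbook treatment of this recursion; the paper's route avoids Jensen entirely and generalizes more easily to per-iteration accuracies $\mu_k$, which is why the authors set it up that way. One small point: the hypothesis bounds $\norm{\yy-\xx_0}_L$ over the sublevel set, so your triangle-inequality bound $\norm{\xx_k-\xx^*}_L\leq 2R$ is the honest consequence and would yield $8nR^2\kappaT(L\Sigma)/\mu$ in $Q$ rather than $2nR^2\kappaT(L\Sigma)/\mu$; the paper's own proof silently substitutes $\norm{\xx-\xx^*}_L\leq R$ for the stated hypothesis, so this constant discrepancy is in the paper, not a defect of your argument.
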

\begin{proof}
We will apply the bound on the single step progress from Lemma~\ref{lemma:new_onestep}, but first, let us derive a lower bound on the norm $\norm{\nabla f(\xx)}_{L^{-1}}^2$ for $\xx \in \R^n$ with $\norm{\xx - \xx^*}_L \leq R$. By convexity~\eqref{def:convex1} and the assumptions on $\xx$ 
we have 
\begin{align*}
 f(\xx) - f(\xx^*) \leq  \lin{\nabla f(\xx), \xx^*-\xx} \leq R \norm{\nabla f(\xx)}_{L^{-1}} \,.
\end{align*}
Hence, by Lemma~\ref{lemma:new_onestep} we can estimate the single step progress
\begin{align*}
 \E \left[f(\xx_+) \mid \xx \right] \leq f(\xx) - \tau \bigl( f(\xx) - f(\xx^*) \bigr)^2\,,
\end{align*}
where $\tau:= \frac{\mu}{2 n R^2 \kappaT(L\Sigma)}$ and $\xx_+ = \xx + \als_f(\xx,\uu)\cdot \uu$ with the notation from Lemma~\ref{lemma:new_onestep}. Conditioned on $\xx$, the quantity $f(\xx)- f(\xx^*) =: f_\xx$ is just a constant. Hence, subtracting $f(\xx^*)$ on both sides, we can rewrite this bound as
\begin{align}
\E \left[f(\xx_+) - f(\xx^*) \mid \xx \right]  &\leq  f_\xx - \tau f_\xx^2 = f_\xx + 2 \min_{h} \left(-h f_\xx + \frac{h^2}{2 \tau} \right) \notag \\ &\leq (1-2h) f_\xx +h^2\tau^{-1} \label{eq:recurse} \,,
\end{align}
where the last inequality holds for arbitrary parameter $h \in \R$.

Now we can proceed to analyze the multi step behavior. For this, we just repeatedly apply the bound~\eqref{eq:recurse} on the single step progress. Conditioning on $\{\xx_k\}_{k= 0}^{N-1}$, we estimate
\begin{align*}
 \E \left[ f(\xx_N) - f(\xx^*) \mid \{\xx_k\}_{k= 0}^{N-1} \right] \leq (1-2 h_N ) f_\xx + \frac{h_N^2}{\tau}\,,
\end{align*}
for any parameter $h_N \in \R$. Now, formally, we recursively apply the conditional expectations,  onditioning on $\{\xx_k\}_{k= 0}^{N-2}$, $\{\xx_k\}_{k= 0}^{N-3}$, \dots , $\{\xx_0\}$, and use~\eqref{eq:recurse} with different parameters $h_{N-1},\dots,h_1$ in every step. By the tower property of conditional expectations, we end up with a bound on $\E[f(\xx_N)- f(\xx^*)]$ that depends on the free parameters $h_1,\dots h_{N}$. 
As in \cite[Theorem~5.3]{Stich:2011}, 
the choice $h_k := \frac{1}{k}$ for $k=1, \dots, N$ yields the lemma (see also~\cite[Lemma~A.1]{Stich:2011}). \qed

\end{proof}
On strongly convex functions the convergence of F-RP is linear.
\begin{theorem}
\label{thm:global_strong}
Let $f \in C^1_{L}$ and let $f$ in addition be strongly convex with parameter $M \in \Pd_n$. Let $\xx^* \in \R^n$ denote the unique minimizer of $f$,  and let the sequence $\{\xx_k\}_{k \geq 0}$ be generated by Algorithm F-RP with covariance $\Sigma \in \Pd_n$ and line search with accuracy $0 \leq \mu \leq 1$. Then
\begin{align*}
\E \left[ f(\xx_{N})-f(\xx^*) \right] %
  &\leq \left(1 - \frac{\mu}{n \kappaT(L\Sigma,M)} \right)^N  \cdot  \left(f(\xx_0)-f(\xx^*) \right) \, .
\end{align*}
\end{theorem}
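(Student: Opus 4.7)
The plan is the familiar two-step recipe for linear convergence on strongly convex objectives: establish a per-step contraction of the expected suboptimality, then iterate.

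First, I would invoke the single-step progress bound of Lemma~\ref{lemma:new_onestep}, which applies regardless of which admissible line-search oracle is used, because~\eqref{def:rells} implies~\eqref{def:rells2} by Lemma~\ref{lemma:l1tol2}:
\begin{equation*}
\E[f(\xx_k) \mid \xx_{k-1}] \leq f(\xx_{k-1}) - \frac{\mu}{2n\,\kappaT(L\Sigma, I_n)}\|\nabla f(\xx_{k-1})\|_{L^{-1}}^2.
\end{equation*}
This is the analogue, in our matrix setup, of the usual descent lemma for gradient-type methods.

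Second, I would feed in strong convexity via the sharp form~\eqref{eq:quadraticlower_matrix}, namely $\|\nabla f(\xx)\|_{M^{-1}}^2 \geq 2\bigl(f(\xx)-f(\xx^*)\bigr)$, to convert the gradient-squared term into suboptimality. To compare the $M^{-1}$-norm with the $L^{-1}$-norm that appears in the progress bound, I would apply Lemma~\ref{lem:AvsBnorm}; the resulting eigenvalue factor combines with $\kappaT(L\Sigma, I_n)$ to give $\kappaT(L\Sigma, M)$, via the scaling identity $\kappaT(D, C) = \kappaT(D, I_n)/\lambda_{\min}(C)$ built into Definition~\eqref{def:kappaT}. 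Subtracting $f(\xx^*)$ from both sides produces the deterministic per-step contraction
\begin{equation*}
\E[f(\xx_k) - f(\xx^*) \mid \xx_{k-1}] \leq \Bigl(1 - \frac{\mu}{n\,\kappaT(L\Sigma, M)}\Bigr)\bigl(f(\xx_{k-1}) - f(\xx^*)\bigr).
\end{equation*}
Unlike the convex case in Theorem~\ref{thm:convex}, no level-set diameter $R$ is needed, because strong convexity already controls the per-step progress in terms of the suboptimality.

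Third, since the contraction factor is deterministic and independent of $\xx_{k-1}$, I would take expectations over $\xx_{k-1}$ and chain $N$ copies of this inequality using the tower property of conditional expectation (exactly as done in the proof of Theorem~\ref{thm:convex}), which gives the claimed geometric bound.

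The main obstacle is the norm-conversion in the second step: Lemma~\ref{lem:AvsBnorm} must be applied in a way that delivers the factor $\lambda_{\min}(M)$ rather than some weaker quantity like $\lambda_{\min}(ML^{-1})$, so that the $\kappaT(L\Sigma, M)$ in the theorem arises verbatim. This will likely require the sharper form of~\eqref{def:stronconvex} obtained by minimizing the quadratic lower bound over an auxiliary matrix $A \preceq M$ aligned with the geometry induced by $L\Sigma$, in the same spectrum-manipulation spirit as the proof of Lemma~\ref{rem:compare}. Once this alignment is set up correctly, the rest of the argument is routine bookkeeping.
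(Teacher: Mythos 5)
Your proposal matches the paper's proof of Theorem~\ref{thm:global_strong} step for step: Lemma~\ref{lemma:new_onestep} for the single-step progress, the norm conversion of Lemma~\ref{lem:AvsBnorm} combined with~\eqref{eq:quadraticlower_matrix} to bound $\norm{\nabla f(\xx_k)}_{L^{-1}}^2$ from below by $2\lambda_{\rm min}(ML^{-1})\,(f(\xx_k)-f(\xx^*))$, and the tower property to chain the resulting per-step contraction. The ``obstacle'' you raise in your last paragraph requires no extra spectrum manipulation: the paper simply keeps the factor $\lambda_{\rm min}(ML^{-1})$ that Lemma~\ref{lem:AvsBnorm} delivers and absorbs it into the third argument of $\kappaE$ and $\kappaT$ (compare the $\kappaT(L\Sigma,ML^{-1})$ appearing in Theorem~\ref{thm:3}), so no auxiliary matrix alignment is needed.
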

\begin{proof}
We use Lemma~\ref{lem:AvsBnorm} to establish
\[
\norm{\nabla f(\xx_k)}_{L^{-1}}^2 \geq \lambda_{\rm \min}(M L^{-1}) \norm{\nabla f(\xx_k)}_{M^{-1}}^2 \,.
\]
Applying the quadratic lower bound~\eqref{eq:quadraticlower_matrix} to further bound the latter term from below yields
\[
\norm{\nabla f(\xx_k)}_{L^{-1}}^2 \geq 2 \lambda_{\rm \min}(M L^{-1}) \left(f(\xx_k) - f(\xx^*)\right)\,.
\]
Now we can combine this bound with Lemma~\ref{lemma:new_onestep} and get
\begin{align}
\label{eq:oneiter}
\E_\uu \left[ f(\xx_k) - f(\xx^*) \mid \xx_k \right] \leq \varrho(\xx_k) \cdot \left( f(\xx_k)- f(\xx^*) \right)\,,
\end{align}
where
\begin{align}
\label{def:confactor}
\varrho(\xx_k) := 1-  \frac{\mu }{n \kappaE(L,\Sigma,M,\nabla f(\xx_k)) }
 \leq 1 - \frac{\mu}{n \kappaT(L\Sigma,M)} \,,
\end{align}
is the exact convergence factor. The uniform upper bound was established in Lemma~\ref{rem:compare}.
The Theorem follows now 
by taking expectation over $\xx_k$. \qed
\end{proof}
We remark that the progress is strict: by Lemma~\ref{rem:compare} the \emph{convergence factor}
\begin{align}
\label{def:confactor2}
\hat{\varrho} := 1 - \frac{\mu}{n \kappaT(L\Sigma,M)} \,,
\end{align}
is strictly smaller than one.


%

It is not necessary that the function $f$ is strongly convex everywhere for linear convergence to hold. Theorem~\ref{thm:3} below shows that convergence (at about a quarter of the rate of the one in Theorem~\ref{thm:global_strong}) can be proven assuming only a weaker condition. Let us recall that strong convexity with parameter $M$ implies that 
\begin{align}
\label{eq:relaxstrongconvex}
f(\xx) - f(\xx^*) \geq \frac{1}{2} \norm{\xx - \xx^*}_M^2\,, \forall \xx \in \R^n \,.
\end{align}
It turns out that, instead of strong convexity~\eqref{def:stronconvex}, the weaker condition~\eqref{eq:relaxstrongconvex} is enough for linear convergence. Strong convex functions need to have positive curvature everywhere, whereas functions with~\eqref{eq:relaxstrongconvex} could also be linear on bounded subsets. 
%
\begin{theorem}
\label{thm:3}
Let $f \in C^1_{L}$ and let $f$ in addition have a unique minimizer $\xx^* \in \R^n$ satisfying~\eqref{eq:relaxstrongconvex} with $M \in \Pd_n$. Let the sequence $\{\xx_k\}_{k \geq 0}$ be generated by Algorithm F-RP with covariance $\Sigma \in \Pd_n$ and line search oracle~\eqref{def:rells} with relative accuracy $0 \leq \mu \leq 1$. Then
\begin{align}
 \E \left[ f(\xx_{N})-f(\xx^*) \right]  \leq \left(1-\frac{\mu}{4 n \kappaT(L\Sigma,ML^{-1})}\right)^N \cdot \left(f(\xx_0)-f(\xx^*) \right)
\end{align}
\end{theorem}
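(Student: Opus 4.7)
\textbf{Proof sketch for Theorem~\ref{thm:3}.} The plan is to start from the single-step bound of Lemma~\ref{lemma:new_onestep2} (which is tailored to the relative-accuracy oracle~\eqref{def:rells} and contains a free parameter $h \geq 0$) and then use the relaxed condition~\eqref{eq:relaxstrongconvex} to replace $\norm{\xx-\xx^*}_L^2$ by a multiple of $f(\xx)-f(\xx^*)$. This is precisely the reason Theorem~\ref{thm:3} needs~\eqref{def:rells} rather than~\eqref{def:rells2}: the bound from Lemma~\ref{lemma:new_onestep2} is expressed in terms of $\norm{\xx-\xx^*}_L^2$, which plays well with~\eqref{eq:relaxstrongconvex}, whereas the bound of Lemma~\ref{lemma:new_onestep} is expressed in terms of $\norm{\nabla f(\xx)}_{L^{-1}}^2$ and would require true strong convexity to proceed (as in Theorem~\ref{thm:global_strong}).

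First, I would convert $\norm{\xx-\xx^*}_L^2$ to a multiple of $f(\xx)-f(\xx^*)$. By Lemma~\ref{lem:AvsBnorm} applied with $A=L$ and $B=M$, one has $\norm{\xx-\xx^*}_L^2 \leq \lambda_{\max}(M^{-1}L)\norm{\xx-\xx^*}_M^2$. Now, $M^{-1}L$ and $L^{-1}M$ are inverse to each other (as linear maps on $\R^n$), so $\lambda_{\max}(M^{-1}L) = \lambda_{\min}^{-1}(L^{-1}M)$; and $L^{-1}M$ is similar to $ML^{-1}$ via conjugation by $L$, so $\lambda_{\min}(L^{-1}M)=\lambda_{\min}(ML^{-1})$. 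Combining this with~\eqref{eq:relaxstrongconvex} yields
\begin{equation*}
\norm{\xx-\xx^*}_L^2 \;\leq\; \frac{2}{\lambda_{\min}(ML^{-1})}\bigl(f(\xx)-f(\xx^*)\bigr).
\end{equation*}

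Plugging this into Lemma~\ref{lemma:new_onestep2} and using the identity $\kappaT(L\Sigma)/\lambda_{\min}(ML^{-1}) = \kappaT(L\Sigma,ML^{-1})$ that is immediate from definition~\eqref{def:kappaT}, the single-step bound becomes
\begin{equation*}
\E_\uu[f(\xx_+)-f(\xx^*)\mid \xx]\;\leq\;\Bigl(1 - \tfrac{h\mu}{n} + \tfrac{h^2\mu K}{n}\Bigr)\bigl(f(\xx)-f(\xx^*)\bigr),
\end{equation*}
valid for every $h\geq 0$, where I abbreviate $K:=\kappaT(L\Sigma,ML^{-1})$. Since we are free to choose $h$, I pick the optimal value $h = 1/(2K)$, which gives the per-step factor
\begin{equation*}
1 - \frac{\mu}{4nK} \;=\; 1-\frac{\mu}{4n\,\kappaT(L\Sigma,ML^{-1})}.
\end{equation*}

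Finally, I would iterate this contraction by applying the tower property of conditional expectations $N$ times (as done at the end of the proof of Theorem~\ref{thm:global_strong}) to obtain the stated bound. The main conceptual obstacle is the bookkeeping in the first step: one must correctly track the spectral quantity produced by Lemma~\ref{lem:AvsBnorm} and recognize that it fits exactly into the $\kappaT(\cdot,ML^{-1})$ form, so that the final contraction factor is precisely a quarter of the one appearing in Theorem~\ref{thm:global_strong}; the rest is calculus-level optimization over $h$ and a routine recursive expectation argument. \qed
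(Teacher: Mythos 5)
Your proposal is correct and follows essentially the same route as the paper: apply Lemma~\ref{lem:AvsBnorm} and~\eqref{eq:relaxstrongconvex} to convert $\kappaT(L\Sigma)\norm{\xx-\xx^*}_L^2$ into $2\kappaT(L\Sigma,ML^{-1})\bigl(f(\xx)-f(\xx^*)\bigr)$, plug this into Lemma~\ref{lemma:new_onestep2}, optimize with $h^{-1}=2\kappaT(L\Sigma,ML^{-1})$, and iterate as in Theorem~\ref{thm:global_strong}. The spectral bookkeeping and the choice of $h$ match the paper's proof exactly.
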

\begin{proof}
First, we use Lemma~\ref{lem:AvsBnorm} followed by~\eqref{eq:relaxstrongconvex} to estimate 
\begin{align*}
\kappaT(L\Sigma) \norm{\xx-\xx^*}_{L}^2 &\leq 
\kappaT(L\Sigma,ML^{-1}) \norm{\xx-\xx^*}_{M}^2 \\ &\leq 2 \kappaT(L\Sigma,ML^{-1}) (f(\xx) - f(\xx^*))\,.
\end{align*}
Now we can just apply Lemma~\ref{lemma:new_onestep2} to estimate the single step progress as
\begin{align}
\E \left[ f(\xx_+) - f(\xx^*) \mid \xx \right] 
\notag \,,
&\leq \bigg(1- \frac{h \mu}{n}  + \frac{h^2 \mu \kappaT(L\Sigma,ML^{-1})}{n}  \bigg) \left( f(\xx) - f(\xx^*) \right) 
\notag \,.
\end{align}
%
By setting $h^{-1} = 2 \kappaT(L\Sigma,ML^{-1})$, the term in the left bracket 
becomes $\left(1-\frac{\mu}{4 n \kappaT(L\Sigma,ML^{-1})}\right)$ and the proof continues as the proof of Theorem~\ref{thm:global_strong}. \qed
\end{proof}
\subsection{Discussion of the Results}
\label{sec:lsabsolute}
The presented theoretical results extend our previous work in~\cite{Stich:2011} in two ways: (i) the analysis in~\cite{Stich:2011} considered only F-RP with covariance $\Sigma = I_n$ the $n$-dimensional identity matrix with less expressive quadratic lower and upper bound assumptions;   (ii) the lower- and upper bounds introduced in Section~\ref{sec:qbounds} allow for a more detailed description of the convergence rates because the quadratic model captures the eigenspectra of the functions.

We see in Theorem~\ref{thm:global_strong} that the number of iterations of F-RP algorithm to reach a target accuracy is proportional to $\mu^{-1}$. This means that for instance for $\mu=\frac{1}{2}$, only twice as many iterations are necessary to reach the same accuracy as with the choice $\mu = 1$, respectively.

The results from the Theorems~\ref{thm:convex}--\ref{thm:3} can also be extended to accommodate for more general line search oracles, for instance also with 
\emph{additive error} of a constant $\epsilon > 0$ in every step.
Such errors are not crucial, the additional error terms just have to be carried along. We refer the interested reader to~\cite{Stich:2011}, where such analysis has been carried out for a similar problem. For functions that admit linear convergence (i.e. Theorem~\ref{thm:global_strong} and~\ref{thm:3}), these errors add up to an absolute constant $C(\epsilon) = \Theta(\epsilon)$ that does not depend on the number $N$ of iterations. On convex functions as treated in Theorem~\ref{thm:convex}, the error grows as $\epsilon N$ with the number of iterations, leading to divergence if the number $N$ of iterations is too large. Therefore we see, that it is much better to express the errors in terms of the relative parameter $\mu$ instead of absolute values.

All results can also be generalized to the case when the accuracy (the relative $\mu$ and possible additive $\epsilon$) of the line search oracles changes in every iteration. This amounts to different bounds on the single step progress in every iteration, and the summation in the proofs of Theorems~\ref{thm:convex}--\ref{thm:3} becomes slightly more involved (see e.g.~\cite{Stich:2014thesis}).

As a last extension, we would like to point out that the results can also be generalized to different sampling distributions and are not only valid for $\bar{\normal}(\0,\Sigma)$ vectors. However, the actual bounds on the convergence rate may change, depending on the new distribution. To determine the new convergence factors, one only has to calculate the expectation $E_L$ in~\eqref{eq:el} for the new distribution, the rest of the proof remains the same.

\subsection{Illustration of the results}
\label{sec:numericsec4}
Let us illustrate the derived bounds with an example. For simplicity, we consider a quadratic function $f(\xx)= \frac{1}{2} \norm{\xx}_A^2$. Clearly, $f \in C^1_A$ and $f$ is strongly convex with parameter $A$. The algorithm F-RP with covariance $\Sigma = I_n$ converges on this function according to Theorem~\ref{thm:global_strong}, and the convergence rate is described by the convergence factor $\hat{\varrho}$ from~\eqref{def:confactor2}. For exact line search ($\mu = 1$) we have $\hat{\varrho} = \bigl(1 - \frac{1}{n \kappaT(A,A)}\bigr)\leq \bigl(1 - \frac{\lambda_{\rm min}(A)}{\Tr[A]}\bigr)$, where the last estimate follows from Lemma~\ref{rem:compare}. We see that this is an improvement over the factor $\bigl(1- \frac{1}{n \kappa(A)}\bigr)$ derived in~\cite{Stich:2011} if the average of the eigenvalues of $A$ is much smaller than the maximal one.

To demonstrate this, let us consider a class of quadratic functions, $g_i \colon \R^n \to \R$ for $1 \leq i < n$, with parameter $\ell \geq 1$:
\begin{align*}
g_i(\xx) &= \frac{\ell}{2} \sum_{j=1}^{i} x_i^2 + \frac{1}{2} \sum_{j= i+1}^{n} x_i^2 
\end{align*}
The Hessians of all functions $g_i$ have the same maximal ($\ell$) and minimal ($1$) eigenvalues. The functions have two different scales that are distributed among the dimensions according to the parameter $i$. 
A previous numerical study~\cite{Stich:2012a} suggests that function $g_i$ is challenging for RP algorithms if $i$ is large (here we use $g_{\lceil\frac{n}{2}\rceil}$ as in~\cite{Stich:2012a}), and easy for $i$ small (here we use $g_5$).  
Figure~\ref{fig:sec4} shows the numerically observed convergence rates (black lines) of F-RP with exact line search for functions $g_{25}$ and $g_5$ with $\ell=1000$ in dimension $n=50$.

The algorithm F-RP with $\Sigma = I_n$ converges on both functions where the convergence rate can be estimated by the convergence factor (cf. Theorem~\ref{thm:global_strong}). For both functions, $g_{25}$ and $g_5$, the result established in~\cite{Stich:2011} provides the same upper bound $\left(1-\frac{1}{\ell n}\right)$ on the convergence factor. Our new result provided in Theorem~\ref{thm:global_strong} yields two different estimates for these two functions, namely $\bigl(1- \frac{2}{\ell n} \bigr)$ for $g_{25}$ and roughly $\bigl(1 - \frac{1}{5 \ell}  \bigr)$ for $g_5$  (see Table~\ref{tab:sec4}). This is in agreement to the empirical observations, as F-RP converges faster on $g_5$ than on $g_{25}$. However, the presented bounds (red lines) slightly underestimate the rate.
\begin{table}
\begin{center}
\begin{tabular}{ c | c | c }
  \hline  
   Function  & Previous result~\cite{Stich:2011} & New result \\  \hline
   \multirow{2}{*}{$g_i$}          & $1 - \frac{1}{n \kappa(A)}$ & $1 - \frac{\lambda_{\rm min}(A)}{\Tr[A]}$  \\
    \cline{2-3} 
   & $1-\frac{1}{\ell n}$ & $ 1- \frac{1}{i \ell + (n-i)}$ \\
  \hline  
\end{tabular}
\caption{Theoretical convergence rates of F-RP on functions $g_i$ from previous analysis in~\cite{Stich:2011}, and Theorem~\ref{thm:global_strong}.}
\label{tab:sec4}
\end{center}
\end{table}
\begin{figure}
\centering
\includegraphics[width=.99\textwidth]{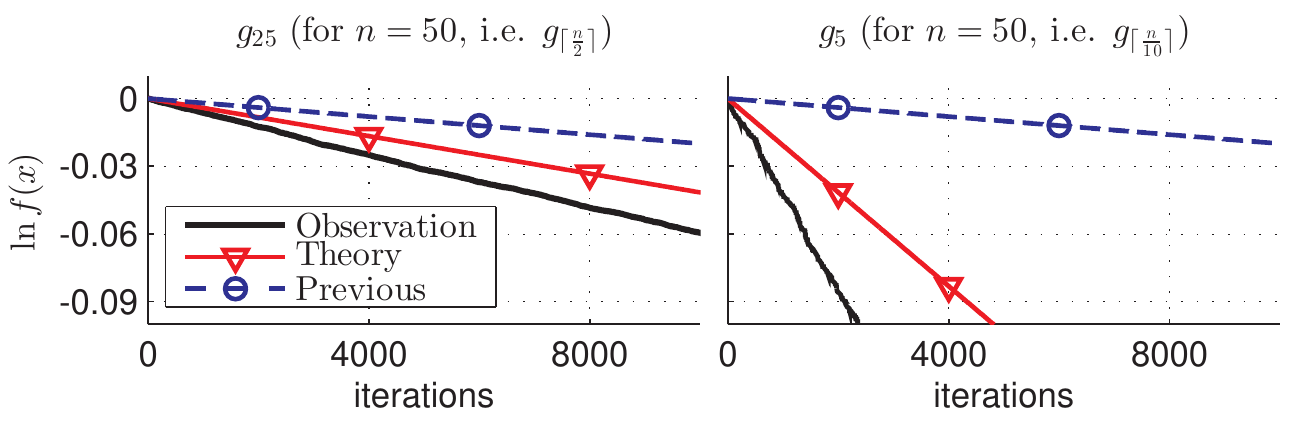}
\caption{Comparison of the derived convergence rates (see Table~\ref{tab:sec4}) with empirical measurements on $g_{25}$ (left) and $g_5$ (right) with $\ell=1000$ in dimension $n=50$. Logarithm of function value vs. number of iterations (ITS). Observed F-RP convergence (solid/black), convergence rate derived in \cite{Stich:2011} (dashed), convergence rate derived in this paper (solid/red).}
\label{fig:sec4}
\end{figure}

It is clear that our worst-case analysis cannot give accurate convergence rates on all convex functions. We can, nonetheless, give a pointer to the part of the proof of Theorem~\ref{thm:global_strong} where we clearly use too conservative estimates. In Equation~\eqref{def:confactor} we
 used a crude estimation of the factor $\varrho(\xx_k)$. This estimate is not tight in every step $k$ (but in the worst case), as can easily seen from Equation~\eqref{eq:error2} in the proof of Lemma~\ref{rem:compare}.  In order to find a convergence factor that best matches the observed rates we may rather analyze an \emph{average case} scenario, and consider the expected value of $\sigma(\xx_k)$ over the trajectory $\{\xx_k\}_{k \geq 0}$ (see Lemma~\ref{remark:jensenforquad} below). However, it seems that such an analysis is the scope of this manuscript as we would not only need the expected function values $\E[f(\xx_k)])$, but also precise information on $\xx_k$ itself. 
Intuitively, we would expect the iterates to be almost always at the ``far ends'' of the ellipsoidal level sets $\{\xx\colon \xx^TA\xx=c\}$ of $f$, and therefore~\eqref{eq:error2} might be only be improved by a small factor.
\begin{lemma}
\label{remark:jensenforquad}
Let $\sigma \colon \R^n \to \R_{\geq 0}$ nonnegative, let $a, b > 0$, let $\{\xx_k\}_{k \geq 0}^N$ and arbitrary sequence of $N$ points in $\R^n$ and $\bar{\sigma} = \frac{1}{N} \sum_{i=1}^{N} \sigma(\xx_k)$. Then
\begin{align*}
\Pi_N:=\prod_{k=1}^N \left(1- \frac{a}{\sigma(\xx_k) + b} \right) \leq  \exp \left[ - \frac{aN}{\bar{\sigma} + b} \right]\,.
\end{align*} 
\end{lemma}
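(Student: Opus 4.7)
The plan is to combine the elementary inequality $1-x\le e^{-x}$ with Jensen's inequality applied to a suitable convex function.

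First, I would bound each factor of $\Pi_N$ using $1-x\le e^{-x}$ (valid for all $x\in\R$, and the terms $\frac{a}{\sigma(\xx_k)+b}$ are well-defined and nonnegative since $\sigma\ge0$ and $b>0$). This yields
\begin{align*}
\Pi_N \;\le\; \exp\!\left[-a\sum_{k=1}^N \frac{1}{\sigma(\xx_k)+b}\right].
\end{align*}
So it remains to prove the arithmetic estimate
\begin{align*}
\frac{1}{N}\sum_{k=1}^N \frac{1}{\sigma(\xx_k)+b} \;\ge\; \frac{1}{\bar\sigma+b}.
\end{align*}

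For this I would invoke Jensen's inequality applied to the map $g\colon[0,\infty)\to\R$, $g(s):=\frac{1}{s+b}$. Since $b>0$, the function $g$ is twice differentiable with $g''(s)=\frac{2}{(s+b)^3}>0$, hence convex on $[0,\infty)$. Jensen's inequality then gives
\begin{align*}
\frac{1}{N}\sum_{k=1}^N g(\sigma(\xx_k)) \;\ge\; g\!\left(\frac{1}{N}\sum_{k=1}^N \sigma(\xx_k)\right) \;=\; g(\bar\sigma) \;=\; \frac{1}{\bar\sigma+b},
\end{align*}
which is exactly the estimate needed. Substituting this back into the exponential bound produces the claimed inequality $\Pi_N\le \exp[-aN/(\bar\sigma+b)]$.

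There is no real obstacle here; the only care needed is in verifying the convexity of $g$ on the relevant domain (the sequence $\sigma(\xx_k)$ lies in $[0,\infty)$ by nonnegativity of $\sigma$, and $b>0$ keeps us away from the pole of $g$). Everything else is a direct chain of two standard inequalities.
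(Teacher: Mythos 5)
Your proposal is correct and follows essentially the same route as the paper: the paper also combines $1-x\leq e^{-x}$ with Jensen's inequality for the convex function $x\mapsto 1/x$ on $x>0$ (applied to the points $\sigma(\xx_k)+b$), which is the same convexity argument you phrase via $g(s)=1/(s+b)$. No substantive difference.
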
 
\begin{proof}
The function $\frac{1}{x}$ for $x > 0$ is convex, therefore by Jensen's inequality 
and $1-x \leq e^{-x}$ we find
\begin{align*}
 -\ln \Pi_N \geq \sum_{k=1}^{N} \frac{a}{\sigma(\xx_k) + b} \geq \frac{aN}{\frac{1}{N} \sum_{i=0}^{N-1} \sigma(\xx_k) + b} = \frac{aN}{\bar{\sigma} + b}\,. \tag*{\qed}
\end{align*}
\end{proof}
\section{Metric Learning in Random Pursuit}
\label{sec:metriclearning}
In Section~\ref{sec:convergence} we have derived exact bounds on the progress rate of F-RP that depend on the sampling distribution. For a quadratic function $f(\xx) = \frac{1}{2} \norm{\xx}_H^2$ with $H \in\Pd_n$, the expected running time of F-RP with search directions $\uu \sim \bar{\normal}(\0,I_n)$ is $O(\kappaT(H) n \ln \frac{1}{\epsilon})$, where $\epsilon > 0$ is the desired accuracy. In contrast, for $\uu \sim \bar{\normal}(\0,H^{-1})$ the running time drops to $O(n \frac{1}{\epsilon})$. This rate is (i) independent of the function $f$ (i.e., the spectrum of $H$) and (ii)
optimal from a theoretical point of view. This follows from the fact that $\left(1- \frac{1}{n}\right)$ is a lower bound on the convergence rate of the ``hit-and-run'' search algorithm as shown by J\"{a}gersk\"{u}pper in~\cite{jaeg:07}. The idealized ``hit-and-run'' scheme analyzed there is identical to the Random Pursuit.

Computing an approximation $\hat{H}$ to $H$ with $\kappaT(\hat{H}^{-1}H) \approx 1$ and then sampling from $\bar{\normal}(\0,\hat{H}^{-1})$ instead for the optimization phase, can reduce the running time to $O(T + n \ln \frac{1}{\epsilon})$, where $T$ denotes the running time of the Hessian Estimation scheme. This Variable Metric Random Pursuit algorithm (V-RP) improves over F-RP if $T \leq \kappaT(H)\ln\frac{1}{\epsilon}$.
This approach also works for general strongly convex functions $f \colon \R^n \to \R$ where the Hessian $\nabla^2 f(\xx)$ is not necessarily constant for all $\xx \in \R^n$. If we assume that the Hessian is only mildly changing (see for instance Lemma~\ref{lemma:factor1} below) then it might suffice to find an approximation of the Hessian $\nabla^2 f(\xx_0)$ of the initial search point $\xx_0 \in \R^n$. Otherwise, we should use a scheme that can iteratively update its estimation of the Hessian, allowing for unforeseen changes. 

We are now left with the challenge of how to efficiently estimate a Hessian matrix $H$ in the present gradient-free setting. Iterative
stochastic covariance matrix adaptation schemes are well-established
in gradient-free continuous optimization
\cite{Hansen:2001,Kjellstrom:1981,Mueller:2010a} and try to estimate directly $\Sigma = H^{-1}$, but are notoriously
difficult to study theoretically. A welcome alternative has recently
been introduced by Leventhal and Lewis~\cite{Leventhal:2011} in form
of RHE. We here review and extend their scheme. For a quadratic function $f(\xx) = \frac{1}{2} \xx^T H \xx$ and initial iterate $B_0 \in \Pd_n$, Leventhal and Lewis
already showed that RHE generates a random sequence $\{B_k\}_{k \geq 0}$ of Hessian approximations with
\begin{align}
\label{eq:resultlev}
 \E[ \norm{B_k - H}_F^2 ] \leq \left(1-\frac{2}{n(n+2)}\right)^k \norm{B_0 - H}_F^2\,.
\end{align}
Therefore, if we use RHE to generate an approximation $\hat{H}$ of the Hessian and then use F-RP with sampling distribution $\bar{\normal}(\0,\hat{H}^{-1})$, the running time of this two-stage V-RP algorithm is $O(n^2 \ln \norm{B_0 - H}_F + n \ln \frac{1}{\epsilon})$ on a quadratic function.

Our contributions are twofold: on the theoretical side, we provide new insights into RHE. We show that (i) RHE itself can be viewed as an instance of F-RP and (ii) give exact expressions for the expectation $\E[ \norm{B_k - H}_F^2 ]$. Furthermore, (iii) we estimate the impact on the running time of the aforementioned two-stage V-RP algorithm if RHE converges to $\nabla^2 f(\xx_0)$, but this matrix is not a very good approximation of the Hessian at the optimum $\xx^*$, $\nabla^2 f(\xx_0) \neq \nabla^2 f(\xx^*)$. On the practical side, we (iv) present three novel and theoretically sound implementations of RHE. For many practical situations, function evaluations are the most costly operations, and the goal is to keep the number of evaluations as low as possible. The third proposed scheme allows---at the expense of $O(n^2)$ storage---to significantly boost the performance of the RHE update in this scenario. 
%

\subsection{Variable Metric update scheme}
\label{sec:lev}
RHE from Leventhal and Lewis~\cite{Leventhal:2011} comprises direct updates of a Hessian estimate. Given a symmetric matrix $B \in\Pd_n$ as current Hessian estimate, the next iterate $B_+$ is determined according to:
\begin{align}
\label{eq:leventhalrule}
B_+ = B + \uu^T \left( H - B \right) \uu \cdot \uu \uu^T \,,
\end{align}
where $\uu \sim \bar{\normal}(\0, I_n)$.
Let us now present a novel interpretation of this update which reveals that RHE is just a special instance of a F-RP algorithm. Here, the search space of the underlying optimization problem is not $\R^n$ as in Section~\ref{sec:convergence}, but $\Sym_n$, the space of symmetric matrices. 
As objective, we aim at minimizing the distance to the Hessian $H$, measured in the Frobenius norm:
\begin{align}
 g(X) := \norm{X - H}_F^2\,. \label{def:g}
\end{align}
This defines a quadratic function $g \colon \Sym_n \to \R$, and for a $B \in \Sym_n$ and a fixed `search direction' $\uu \uu^T$, we can easily derive an analytic expression of $\ls_g(B,\uu \uu^T)$, the exact line search in direction $\uu \uu^T$. By definition, we have
\begin{align*}
 \ls_g(X,\uu \uu^T) = \argmin_t g(B + t \uu \uu^T) = \argmin_t \norm{B + t \uu  \uu^T -H }_F^2\,.
\end{align*}
We now determine the parameter $t$ as to minimize the right hand side, that is 
$ \uu^T B \uu -\uu^T H \uu + t \uu \uu^T \uu \uu^T  = 0$
and conclude
\begin{align}
 \ls_g(B,\uu \uu^T) = \uu^T (H - B) \uu \,. \label{eq:lsopt}
\end{align}
We have now established, that the RHE is (i) just an instance of a specific F-RP algorithm. Moreover, (ii) the update step in~\eqref{eq:leventhalrule} corresponds to a step of F-RP with an exact line search oracle $\ls_g(B, \uu\uu^T)$. 

The formula~\eqref{eq:leventhalrule}, or equivalently~\eqref{eq:lsopt},   requires the evaluation of $\uu^T H \uu$ with unknown $H$. For twice differentiable functions $f$ the second derivative of $f$ at $\xx$ in direction $\uu$ can be well approximated by finite differences\footnote{The two points $\xx \pm \epsilon \uu$ are not required to be at the same distance to $\xx$. For points $\xx - \epsilon_1 \uu$, 
$\xx + \epsilon_2 \uu$ the curvature can be estimated by 
quadratic interpolation with slight adaptation of formula~\eqref{eq:finitediff}.}:
\begin{align}
\label{eq:finitediff}
\uu^T H \uu \approx \frac{f(\xx + \epsilon \uu) - 2 f(\xx) + f(\xx - \epsilon \uu) }{\epsilon^2} 
\end{align}
for some small $\epsilon > 0$ as proposed in~\cite{Leventhal:2011}.
In the convex quadratic case, the above formula is exact for arbitrary $\epsilon > 0$. For general functions the approximation of $\uu^T H \uu$ with formula~\eqref{eq:finitediff} may not be accurate, thus leading to a failure of the update. Note that this approach 
only requires two additional function evaluations at $\xx \pm \epsilon \uu$. In addition, the formula implies that the estimate $B_+$ behaves at $\xx$ like the unknown Hessian along direction $\uu$, that is, $\uu^T B_+ \uu = \uu^T H \uu$. This can be seen directly from~\eqref{eq:leventhalrule} by noting that $\uu^T \uu \uu^T \uu = 1$.

The interpretation of RHE as a F-RP algorithm allows to study inexact line search oracles, which correspond to errors in the estimation~\eqref{eq:finitediff}. Qualitative bounds could be derived by making strong assumptions on $f$, for instance assuming $f(\xx) := f(\xx_0) + \nabla f(\xx_0) (\xx-\xx_0) + \frac{1}{2} (\xx-\xx_0) ^T H (\xx-\xx_0) + O(\norm{(\xx-\xx_0)}^3)$ for every $\xx_0 \in \R^n$, i.e.\ with a Hessian $H$ independed of $\xx_0$. However, this kind of very restricted objective functions are not very interesting in general. We discuss more general functions in Section~\ref{sec:rheconvex} below.

\subsection{Convergence of Random Hessian Estimation}
\label{sec:lev2}
We now derive an exact expression for the expected single step progress of RHE.
\begin{lemma}
\label{lemma:lev1}
Let $B,H \in \Sym_n$ fixed, let $g \colon \Sym_n \to \R$ as in~\eqref{def:g}, let $B_+$ as in~\eqref{eq:leventhalrule} with $\uu \sim \bar{\normal}(\0, I_n)$. Then
\begin{align*}
 \E \left[ g(B_+) \mid B \right] = g(B) - \frac{ 2 g(B) + \Tr[B-H]^2}{n(n+2)}  \leq \left(1 - \frac{2}{n(n+2)} \right) g(B)  \,.
\end{align*}
\end{lemma}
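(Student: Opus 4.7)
The plan is to expand $g(B_+)$ directly and then apply Lemma~\ref{lemma:factsnormalized} (with $\Sigma = I_n$) to evaluate the resulting expectation in closed form.

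First I would set $D := B - H \in \Sym_n$, so that $B_+ - H = D - (\uu^T D \uu)\, \uu\uu^T$. Since $\uu \sim \bar{\normal}(\0, I_n)$ lies on the unit sphere in the ordinary Euclidean norm, $\uu^T\uu = 1$, and hence $\norm{\uu\uu^T}_F^2 = (\uu^T\uu)^2 = 1$ and $\lin{D,\uu\uu^T}_F = \Tr[D\uu\uu^T] = \uu^T D \uu$. Expanding the Frobenius norm therefore gives
\begin{align*}
g(B_+) = \norm{D - (\uu^T D \uu)\,\uu\uu^T}_F^2 = \norm{D}_F^2 - 2(\uu^T D \uu)^2 + (\uu^T D \uu)^2 = g(B) - (\uu^T D \uu)^2.
\end{align*}

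Next I would take the conditional expectation. Because $D$ is symmetric, $\Tr[D^2] = \norm{D}_F^2 = g(B)$, so applying the third identity of Lemma~\ref{lemma:factsnormalized} with $A = D$ and $\Sigma = I_n$ yields
\begin{align*}
\E\!\left[(\uu^T D \uu)^2 \mid B\right] = \frac{\Tr[D]^2 + 2\Tr[D^2]}{n(n+2)} = \frac{\Tr[B-H]^2 + 2\,g(B)}{n(n+2)}.
\end{align*}
Substituting back gives the claimed equality. The inequality then follows immediately from $\Tr[B-H]^2 \geq 0$.

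No real obstacle is anticipated: the only non-cosmetic step is recognizing that on a normalized sample the cross and square terms collapse cleanly to $-(\uu^T D \uu)^2$, after which Lemma~\ref{lemma:factsnormalized} does all the work.
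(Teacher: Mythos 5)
Your proof is correct and follows the same route as the paper: both reduce $g(B_+)$ to $g(B) - (\uu^T(B-H)\uu)^2$ (the paper calls this ``standard calculus,'' which you simply spell out via the Frobenius expansion and $\norm{\uu}=1$), then apply the third identity of Lemma~\ref{lemma:factsnormalized} with $\Sigma = I_n$ and conclude with $\Tr[B-H]^2 \geq 0$. No discrepancies.
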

\begin{proof}
By standard calculus and the definition of $g$ we have
\begin{align*}
 g(B_+) = g(B) - (\uu^T(B-H)\uu)^2\,.
\end{align*}
The expectation of the second term on the right hand side was calculated in Lemma~\ref{lemma:factsnormalized}, and the lemma follows from the trivial estimate  $\Tr[B-H]^2 \geq 0$.
\end{proof}

The (uniform) upper bound in Lemma~\ref{lemma:lev1} can be quite far away from the exact value. We estimate with Cauchy-Schwarz $\Tr[B-H]^2 \leq n \norm{B-H}_F^2$ and
\begin{align*}
g(B) - \frac{ 2 g(B) + \Tr[B-H]^2}{n(n+2)}  \geq \left(1 - \frac{2}{n} \right) g(B)  \,.
\end{align*}
Both, the upper and lower bound on the exact factor are tight in general, but they are different by a factor of approximately $n$. Thus one might wonder if the result~\eqref{eq:resultlev} from~\cite{Leventhal:2011} is too conservative in general. But this is not the case, as we answer in the next theorem.

\begin{theorem}
\label{thm:matrix}[Exact RHE]
\index{convergence factor!exact} 
Let $H \in \Sym_n$ fixed, let $\{B_k\}_{k \geq 0}$ a sequence of iterates with $B_{k+1} = B_k + (\uu_k^T (H-B_k)\uu_k) \cdot \uu_k\uu_k^T$ with $\uu_k \sim \bar{\normal}(\0, I_n)$. Denote $X_k := B_k - H$ and let parameters
$\xi_1(k):= \left(\lambda_1^k + \lambda_2^k\right)$, $\xi_2(k):= \left(\lambda_1^k - \lambda_2^k\right)$ with
\begin{align*}
 &\lambda_1 = \frac{2n^2 + 2n - 5 - \omega}{2 n (n+2)} \,,
 &\lambda_2 = \frac{2n^2 + 2n - 5 + \omega}{2 n (n+2)} \,,
\end{align*}
and $\omega = \sqrt{4n^2 + 4n -7}$. Then for $N > 0$
\begin{align*}
 \E\left[\norm{X_N}_F^2\right] &= \xi_1(N) \frac{\norm{X_0}_F^2}{2} - \xi_2(N) \left( \frac{(2n+1) \norm{X_0}_F^2}{2  \omega} - \frac{\Tr[X_0]^2}{\omega} \right) \,, \\
\E\left[\Tr[X_N]^2\right] &= \xi_1(N) \frac{\Tr[X_0]^2}{2} - \xi_2(N) \left( \frac{2 \norm{X_0}_F^2}{\omega} - \frac{(2n+1)\Tr[X_0]^2}{2\omega} \right).
\end{align*}
\end{theorem}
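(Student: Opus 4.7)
The plan is to derive and solve a two-dimensional linear recurrence governing the pair $(a_k, b_k) := (\E[\|X_k\|_F^2], \E[\Tr[X_k]^2])$. Starting from the update $X_{k+1} = X_k - (\uu_k^T X_k \uu_k)\,\uu_k \uu_k^T$, and using that $\uu_k \sim \bar{\normal}(\0, I_n)$ is normalized in the Euclidean norm so that $\Tr[\uu_k \uu_k^T] = \|\uu_k\|^2 = 1$, I would first record the two scalar identities $\|X_{k+1}\|_F^2 = \|X_k\|_F^2 - (\uu_k^T X_k \uu_k)^2$ and $\Tr[X_{k+1}] = \Tr[X_k] - \uu_k^T X_k \uu_k$, whence $\Tr[X_{k+1}]^2 = \Tr[X_k]^2 - 2\,\Tr[X_k]\,\uu_k^T X_k \uu_k + (\uu_k^T X_k \uu_k)^2$.

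Next I would take conditional expectations. The Frobenius-norm recursion is exactly Lemma~\ref{lemma:lev1} rearranged. For the trace-squared recursion I would invoke Lemma~\ref{lemma:factsnormalized} with $\Sigma = I_n$ and $A = X_k$: it gives $\E[\uu_k^T X_k \uu_k \mid X_k] = \Tr[X_k]/n$ and $\E[(\uu_k^T X_k \uu_k)^2 \mid X_k] = (\Tr[X_k]^2 + 2\|X_k\|_F^2)/(n(n+2))$, using $\Tr[X_k^2] = \|X_k\|_F^2$ by symmetry. Taking the total expectation and collecting terms yields the linear system
\[
\begin{pmatrix} a_{k+1} \\ b_{k+1} \end{pmatrix} = M \begin{pmatrix} a_k \\ b_k \end{pmatrix}, \qquad M := \frac{1}{n(n+2)} \begin{pmatrix} n^2 + 2n - 2 & -1 \\ 2 & n^2 - 3 \end{pmatrix}.
\]

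The remaining step is to diagonalize $M$. Its trace is $(2n^2+2n-5)/(n(n+2))$, which already matches $\lambda_1 + \lambda_2$, and a direct expansion shows that $(\operatorname{tr} M)^2 - 4\det M$ simplifies to $(4n^2+4n-7)/(n(n+2))^2 = \omega^2/(n(n+2))^2$, so the eigenvalues of $M$ are precisely the $\lambda_1, \lambda_2$ stated in the theorem. Consequently $a_N = \alpha \lambda_1^N + \beta \lambda_2^N$ and $b_N = \gamma \lambda_1^N + \delta \lambda_2^N$ for scalars depending only on $X_0$; I would recover them by using the identity $\alpha \lambda_1^N + \beta \lambda_2^N = \tfrac{\alpha+\beta}{2}\,\xi_1(N) + \tfrac{\alpha-\beta}{2}\,\xi_2(N)$ and matching the ansatz at $N=0$ (where $\xi_1(0)=2,\xi_2(0)=0$) and at $N=1$ (read off directly from the matrix $M$ above).

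The only genuine obstacle is algebraic bookkeeping: first verifying $(2n^2+2n-5)^2 - 4\bigl[(n^2+2n-2)(n^2-3)+2\bigr] = 4n^2 + 4n - 7$ to pin down the discriminant, and then checking that the two linear systems for $(\alpha,\beta)$ and $(\gamma,\delta)$ produce exactly the coefficients $(2n+1)\|X_0\|_F^2/(2\omega) - \Tr[X_0]^2/\omega$ and $2\|X_0\|_F^2/\omega - (2n+1)\Tr[X_0]^2/(2\omega)$ multiplying $\xi_2(N)$ in the statement. There is no conceptual subtlety beyond a careful $2\times 2$ eigendecomposition.
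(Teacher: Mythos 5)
Your proposal is correct and follows essentially the same route as the paper: both derive the coupled linear recurrence for the pair $\bigl(\E[\norm{X_k}_F^2],\E[\Tr[X_k]^2]\bigr)$ from Lemma~\ref{lemma:lev1} together with the moment identities of Lemma~\ref{lemma:factsnormalized}, arrive at the identical $2\times 2$ transition matrix (your $M$ is the paper's $C(n)$ from Lemma~\ref{lemma:2x2}), and solve by diagonalization. The only difference is cosmetic: the paper states the eigendecomposition of $C(n)$ explicitly and verifies it by multiplication, whereas you recover the eigenvalues from the characteristic polynomial and fix the coefficients by matching at $N=0$ and $N=1$ --- both computations check out.
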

Before going into the proof of this theorem, let us discuss its statement. It is not hard to see, that $\lambda_2  \leq 1- \frac{2}{n(n+2)}$ and $\lambda_1 = 1- \Theta \bigl( \frac{1}{n} \bigr)$. Therefore we can approximate the factors $\xi_1(k) \approx -\xi_2(k) \approx \lambda_2^k$ for $k$ large enough. The upper bound~\eqref{eq:resultlev} from Leventhal and Lewis~\cite{Leventhal:2011} is therefore reached if $\Tr[X_0] = \Tr[B_0 - H] = 0$. However, if $\abs{\Tr[X_0]}$ is large, $\Tr[X_0]^2 = n \norm{X_0}_F^2$, say, then term in the right bracket almost vanishes and $\E[\norm{X_k}_F^2] \approx \frac{1}{2} \lambda_2^k \norm{X_0}_F^2$.
Thus the estimation~\eqref{eq:resultlev} cannot significantly be improved, regardless of $\Tr[X_0]$ we have
$\frac{1}{2} \norm{X_0}_F^2 \bigl(1-\frac{2}{n(n+2)}\bigr)^k \lesssim \E \bigl[ \norm{X_k}_F^2 \bigr] \leq   \norm{X_0}_F^2 \bigl(1-\frac{2}{n(n+2)}\bigr)^k $, where the first inequality holds up to some lower order therms of $n$.

\begin{remark}
\label{rem:markov}
The above theorem derives an exact expression for $\E[ \norm{B_N - H}_F^2$, but no high-probability estimates. With Markov's inequality
one can easily get an upper bound on $\E[ \norm{B_N - H}_F^2$ that holds with high probability. Let $j \leq N$ and $b > 0$ with $(1-\frac{2}{n(n+2)})^{j} = b$. We have $\E[ \norm{B_{N} - H}_F^2 \leq (1-\frac{2}{n(n+2)})^{N-j} \norm{B_{0} - H}_F^2$ with probability at least $1-b$. 
\end{remark}

\begin{proof}[of Thm.~\ref{thm:matrix}]
Let the iteration $k$ be fixed. The exact expression for  the single step progress from Lemma~\ref{lemma:lev1} depends not only on $\norm{X_k}_F^2$, but also on $\Tr[X_k]^2$. Let us also calculate $\E[\Tr[X_{k+1}]^2$. By the definition of the update~\eqref{eq:leventhalrule} we immediately get $\Tr[X_{k+1}] = \Tr[X_k] - \uu_k^T X_k \uu_k$, and therefore
\begin{align*}
 \E \bigl[ \Tr[X_{k+1}]^2 \mid \{X_i\}_{i=0}^k \bigr] &= \Tr[X_k]^2  - \E \left[2 \Tr[X_k] \uu_k^T X_k \uu_k  - (\uu_k^T X_k \uu_k)^2 \mid X_k \right] \\
  &= \left(1- \frac{2n+3}{n (n+2)} \right) \Tr[X_k]^2 + \frac{2}{n(n+2)} \norm{X_k}_F^2 \,,
\end{align*}
with Lemma~\ref{lemma:factsnormalized}. We obtain a linear recurrence for the conditional expectations of $\norm{X_k}_F^2$ and $\Tr[X_k]^2$.
What we now have to do, formally, is to condition on $\{X_{i}\}_{i=0}^{k-1}$ and calculate the expectations again. By the tower property of conditional expectations, $\E[E[\norm{X_{k+1}}_F^2 \mid \{X_{i}\}_{i=0}^{k}]\mid \{X_{i}\}_{i=0}^{k-1}] = \E[\norm{X_{k+1}}_F^2 \mid \{X_{i}\}_{i=0}^{k-1}]$. Repeating this procedure for $\{X_{i}\}_{i=0}^{k-2}$ up to $X_0$, we finally obtain $E[\norm{X_{k+1}}_F^2 \mid X_0] =\E[\norm{X_{k+1}}_F^2]$. We observe that all intermediate expressions only depend linearly on $\norm{X_0}_F^2$ and $\Tr[X_0]^2$, that is we can write $\bigl(\E[\norm{X_k}_F^2, \E[\Tr[X_k]^2] \bigr)^T = C(n)^k \bigl(\norm{X_0}_F^2, \Tr[X_0]^2\bigr)^T$ for a $2 \times 2$ matrix $C(n)$. By linear algebra, we can now decouple the linear recurrence. This is carried out in detail in Lemma~\ref{lemma:2x2} in the appendix.
\end{proof}

%

\subsection{RHE on general strongly convex functions}
\label{sec:rheconvex}
Theorem~\ref{thm:matrix} shows the convergence of RHE to one fixed target matrix $H \in \Pd_n$, where $H=\nabla^2 f(\xx_0)$ is the Hessian of the objective function $f$ at a point $\xx_0 \in \R^n$. For quadratic functions the Hessian $H$ is constant, hence RHE converges to $H$ regardless whether the estimates~\eqref{eq:finitediff} are evaluated at a single point $\xx_0$, or at various different points $\{\xx_k\}_{k \geq 0}$. For an initial estimate $B_0 \in \Pd_n$, at most $O(n^2 \ln \norm{B_0 - H}_F)$ iterations of RHE are necessary to find an approximation $\hat{H} \approx H$, that can be used for the sampling of the search directions in F-RP. Hence the running time of V-RP on quadratic functions is $O(n^2 \ln \norm{B_0 - H}_F + n \ln \frac{1}{\epsilon})$, where $\epsilon > 0$ is the target accuracy. In Section~\ref{sec:imple} we show how this bound can be improved if we only count function evaluations, instead of iterations of RHE or F-RP.

On general strongly convex functions, the Hessian is different at every point in the space. For a fixed point $\xx_0 \in \R^n$, Theorem~\ref{thm:matrix} shows convergence of RHE to $\nabla^2 f(\xx_0)$ if the estimates~\eqref{eq:finitediff} are evaluated at the single point $\xx_0$. However, it might not be useful to compute an approximation of the Hessian at $\xx_0$ if this matrix is not close to the Hessian at the optimum $\xx^* \in \R^n$. Therefore it seems reasonable to interlace the update steps of RHE with the search steps of F-RP, i.e.\ invoke one update step~\eqref{eq:leventhalrule} at each search point $\{\xx_k\}_{k \geq 0}$ that is generated by F-RP. This approach is theoretically justified: As the iterates $\{\xx_k\}_{k \geq 0}$ of F-RP converge (slowly) to $\xx^*$, also the corresponding Hessians $H_k:=\norm{\nabla^2(\xx_k)}$ will converge to $H:= \nabla f(\xx^*)$, and in~\cite[Theorem 2.3]{Leventhal:2011} it is shown, that the Hessian estimates $\{B_k\}_{k \geq 0}$ generated by this interlaced scheme will converge to $H$ as well. However, this theorem does not imply a strong bound on the running time, as their technique only provides a bound on the convergence factor for iterations $k \geq K$, where $K$ is such that $\norm{H_K - H}_F \ll 1$. The following example shows that $K$ can be as large as $O(n \kappaT(H))$. Consider a strongly convex function $f \colon \R^2 \to \R$ with minima $\xx^* = \0$ and Hessians 
\begin{align*}
H(\xx) := \begin{bmatrix} 10^9 + \abs{x_1} & 0\\0 & 1 \end{bmatrix}\,.
\end{align*}
For $\xx \in \R^2$ it is required $\abs{x_1} < 1$ to guarantee $\norm{H(\xx)-H(\xx^*)}_F < 1$. For initial iterate $\xx_0:=(10^{10},0)^T$, F-RP with sampling distribution $\bar{\normal}(\0,I_2)$ needs $O(n \kappaT(H(\xx^*)))$ iterations to find such a close point. On the other hand, $\kappa(H(\xx_0)^{-1} H(\xx^*)) \approx 10$. That is, it would suffice if RHE is only invoked locally at the initial iterate $\xx_0$, because an approximation $\hat{H} \approx H(\xx_0)$ suffices to guarantee fast convergence of F-RP with sampling distribution $\bar{\normal}(\0,\hat{H}^{-1})$. The running time of this approach is only $O(n^2 \ln \norm{B_0 - H(\xx_0)}_F + n \ln \frac{1}{\epsilon})$ instead (see Theorem~\ref{lemma:factor1} below).

We conclude, that the condition $\norm{H_K - H}_F \ll 1$ in~\cite[Theorem 2.3]{Leventhal:2011} is far too strong for what is needed here. The following theorem aims at relaxing this condition, measuring the deviation by $\kappa(H_K^{-1} H)$ instead. That is, we here consider only the situation where the Hessian $\nabla^2 f(\xx_0)$ of the initial iterate $\xx_0$ is already close enough to $\nabla^2 f(\xx^*)$ and RHE is only invoked at $\xx_0$, finding an approximation $\hat{H}$ of $\nabla^2 f(\xx_0)$. We give a bound on the convergence factor of F-RP with sampling distribution $\bar{\normal}(\0,\hat{H}^{-1})$. Using the triangle inequality as in the proof of Theorem 2.3 in~\cite{Leventhal:2011}, it would also be possible to derive an analogous statement for the interlaced V-RP approach.

\begin{theorem}
\label{lemma:factor1}
Let $0 < a \leq b$, $0 \leq c < 1$, $d \geq 1$ and let $B,H,X \in \Pd_n$, with $\norm{Y}_2 \leq b$ and $\norm{Y^{-1}}_2 \leq a^{-1}$ for $Y=\{B,H,X\}$. Here $\norm{Y}_2$ denotes the operator norm induced by the 2-norm. Let $\norm{B-X}_F  \leq a^2b^{-1}c$ and $\kappa(H^{-1} X) \leq d$. Then $\kappa(H^{-1}B) \leq \frac{d+c}{1-c}$.
\end{theorem}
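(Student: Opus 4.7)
The plan is to reformulate both condition numbers as ratios of generalized Rayleigh quotients and then treat $B$ as a perturbation of $X$. Since for symmetric positive definite $M,H$ the product $H^{-1}M$ has the same (real, positive) eigenvalues as $H^{-1/2} M H^{-1/2}$, one has $\kappa(H^{-1}M) = (\max_{v\neq 0} v^T M v / v^T H v)/(\min_{v\neq 0} v^T M v / v^T H v)$. Introducing $q(v) := (v^T B v)/(v^T H v)$ and $r(v) := (v^T X v)/(v^T H v)$, my goal becomes to control $q$ by $r$ in a simple pointwise manner and then divide the extremal values.

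First I would unpack the spectral hypotheses. The bounds $\norm{Y}_2 \leq b$ and $\norm{Y^{-1}}_2 \leq a^{-1}$ are equivalent, by symmetry of $Y$, to $a \leq \lambda_{\min}(Y) \leq \lambda_{\max}(Y) \leq b$ for $Y\in\{B,H,X\}$. Two useful facts follow. First, $v^T H v \geq a \norm{v}^2$ and $v^T X v \geq a \norm{v}^2$, hence $r(v) \geq a/b$ for every $v$, and so $r_{\min} := \min_v r(v) \geq a/b$. Second, the Frobenius-to-operator bound together with the hypothesis gives $\norm{B-X}_2 \leq \norm{B-X}_F \leq a^2 c/b$, so
\begin{equation*}
|q(v) - r(v)| \;=\; \frac{|v^T(B-X)v|}{v^T H v} \;\leq\; \frac{(a^2 c/b)\norm{v}^2}{a\norm{v}^2} \;=\; \frac{ca}{b}.
\end{equation*}
Since $c<1$, the shift $ca/b$ is strictly below $r_{\min}$, so $\min_v q(v) \geq r_{\min} - ca/b > 0$ and $\max_v q(v) \leq r_{\max} + ca/b$.

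Dividing and using the standing assumption $r_{\max} \leq d\, r_{\min}$ yields
\begin{equation*}
\kappa(H^{-1}B) \;\leq\; \frac{r_{\max} + ca/b}{r_{\min} - ca/b} \;\leq\; \frac{d\, r_{\min} + ca/b}{r_{\min} - ca/b}.
\end{equation*}
Introducing $t := r_{\min}\cdot b/a \geq 1$ rewrites the right-hand side as $(dt+c)/(t-c)$; its derivative in $t$ is $-c(d+1)/(t-c)^2 \leq 0$, so it is non-increasing on $t \geq 1 > c$ and attains its maximum at $t=1$, where the value is exactly $(d+c)/(1-c)$. The hard part, as I see it, is really just spotting the right framework: passing from the non-symmetric matrix product $H^{-1}B$ to a scalar Rayleigh-quotient comparison and then to a one-variable monotonicity argument. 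After that step everything reduces to elementary arithmetic, and the role of each hypothesis is transparent ($c<1$ keeps the denominator positive, the spectral bounds force $t\geq 1$, and the Frobenius hypothesis is used only through its operator-norm consequence $ca/b$).
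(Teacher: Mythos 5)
Your proof is correct and follows essentially the same route as the paper's: both decompose $B = X + (B-X)$, establish $\lambda_{\rm min}(H^{-1}X) \geq a/b$ from the spectral hypotheses, show the perturbation moves the extreme eigenvalues of $H^{-1}X$ by at most $ac/b$, and finish with the monotonicity of $(dt+c)/(t-c)$ in $t \geq 1$. The only difference is presentational: you run the perturbation argument through Rayleigh quotients of the pencil $(B,H)$, whereas the paper applies the Weyl-type bound $\lambda_{\rm min}(H^{-1}X + H^{-1}(B-X)) \geq \lambda_{\rm min}(H^{-1}X) - \norm{H^{-1}(B-X)}_2$ directly to the non-symmetric products; your phrasing makes the implicit symmetrization by $H^{-1/2}$ explicit, but the quantitative content is identical.
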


For $f \in C^1_L$ and strongly convex with parameter $M$, we can estimate: $\norm{\nabla^2 f(\xx)}_2 \leq \lambda_{\rm max}(L)$ and $\norm{(\nabla^2 f(\xx))^{-1}}_2 \leq \lambda_{\rm min}^{-1}(M)$, at any $\xx \in \R^n$. Suppose $\xx_0 \in \R^n$ is such that
for $X := \nabla^2 f(\xx_0)$ and $H:=\nabla^2 f(\xx^*)$, $\kappa(H^{-1}B) \leq d$ for some $d \geq 1$ and $B \in \Pd_n$ an initial iterate of RHE. According to Theorem~\ref{lemma:factor1} it takes $O(n^2 (\ln \norm{B-X}_F  + \ln \frac{\lambda_{\rm max}(L)}{\lambda_{\rm min}(M)^2}))$ iterations of (RHE) to find a sufficiently close estimate $B_K$, s.t. $\kappa(B_K^{-1}H) \leq 2d + 1$, say ($c=\frac{1}{2}$).

\begin{proof}[of Theorem~\ref{lemma:factor1}]
We have $\lambda_{\rm max}(X^{-1}H) = \norm{X^{-1} H}_2 \leq \norm{X^{-1}}_2 \norm{H}_2 \leq a^{-1}b$ by submultiplicativity and the assumptions, hence $\lambda_{\rm min}(H^{-1}X) \geq ab^{-1}$. Therefore
\begin{align*}
\lambda_{\rm min}(H^{-1}B) &= \lambda_{\rm min}(H^{-1}X + H^{-1}(B - X)) \\ &\geq \lambda_{\rm min}(H^{-1}X) - \norm{H^{-1}(B - X)}_2 \\ &\geq \frac{a}{b} -  \norm{(B - X)}_F \norm{H^{-1}}_2 \geq \frac{a}{b} - \frac{a^2 c}{b}  \norm{H^{-1}}_2 \,,
\end{align*}
with the assumed upper bound on $\norm{B-X}_F$. As $\norm{H^{-1}}_2 \leq a^{-1}$, we conclude $\lambda_{\rm min}(H^{-1}B) \geq (1-c)ab^{-1} > 0$.  With the analogous argument $\lambda_{\rm max}(H^{-1} B) \leq \lambda_{\rm max}(H^{-1}X) + ab^{-1}c$ and we can estimate the condition number
\begin{align*}
\kappa(H^{-1}B) \leq \frac{\lambda_{\rm max}(H^{-1}X) + \frac{ac}{b}}{\lambda_{\rm min}(H^{-1}X) - \frac{ac}{b}} \leq \frac{d \lambda_{\rm min}(H^{-1}X) + \frac{ac}{b}}{\lambda_{\rm min}(H^{-1}X) - \frac{ac}{b}}\,.
\end{align*}
The fraction $\frac{dx + y}{x-y}$ for $x - y > 0$, $d,y > 0$ is maximized if $x$ is as small as possible. With the lower bound on $\lambda_{\rm min}(H^{-1}X)$ we finally conclude
\begin{align*}
\kappa(H^{-1}B) \leq \frac{ ab^{-1} (d + c) }{ ab^{-1}(1-c)}\,. \tag*{\qed}
\end{align*}
\end{proof}
\subsection{Implementations of RHE}
\label{sec:imple}
Now we proceed to present three implementations of RHE. One difficulty is, that the update~\eqref{eq:leventhalrule} does not guarantee that the matrix $B_+$ is positive definite. An standard result in Wedderburn \cite[pg. 69]{Wedderburn:38} states that for $B\in \Pd_n$, $\uu \in \R^n$ with $\norm{\uu}=1$, the matrix $B + t \uu \uu^T$ is positive definite if $t^{-1} < \uu^T B^{-1} \uu$. Leventhal and Lewis suggest an \emph{ad hoc} projection of $B_+$ onto the cone of $\Pd_n$ matrices. They numerically show that this yields a practicable algorithm \cite{Leventhal:2011}.

\def \sfactor{0.9}
\begin{figure}[htb]
\centering
\null\hfill %
\scalebox{\sfactor}{	
	\begin{minipage}[t]{.51\linewidth}
	      \vspace{0pt}
	      \begin{algorithm}[H]
	     %
	     %
	      \NoCaptionOfAlgo
	      \SetKwInOut{Output}{Output}
	      \Output{Hessian estimate $B_+ \in \Pd_n$}
	      \SetAlgoVlined
	      \DontPrintSemicolon
	      \SetInd{0.3em}{0.5em}
		      \nl $\uu \sim \bar{\normal}(\0, I_n)$\;
		      \nl $\Delta_\uu \leftarrow \frac{f(\xx+\epsilon \uu) - 2f(\xx) + f(\xx-\epsilon \uu)}{\epsilon^2} - \uu^TB\uu$\;
		      \nl \eIf{ $T \leftarrow B + \Delta_\uu \cdot \uu \uu^T \in \Pd_n$}{
		      \nl $B_+ \leftarrow T$\;
		      }{
		      \nl $\vv \leftarrow \mathtt{smallestEVec}(T)$\;
		      \nl $\Delta_\vv \leftarrow \frac{f(\xx+\epsilon \vv) - 2f(\xx) + f(\xx-\epsilon \vv)}{\epsilon^2} -\vv^TT\vv$\;
		      \nl $B_+ \leftarrow \left(B + \Delta_\vv \cdot \vv \vv^T\right) + \Delta_\uu \cdot \uu \uu^T$\;
		      }%
		      \nl \Return{$B_+$}
		      \caption{{$\texttt{updateHessCorr}(f, \xx, B, \epsilon)$}}%
	     \end{algorithm}%
	\end{minipage}  
}%
\hfill\null
\scalebox{\sfactor}{
	\begin{minipage}[t]{.54\linewidth}
	      \vspace{0pt}
	      \begin{algorithm}[H]
	     %
	     %
	      \NoCaptionOfAlgo
	      \SetKwInOut{Requires}{Requires}
	      \SetKwInOut{Output}{Output}
	      \Requires{Persistent storage S of size $O(n^2)$}	      
	      \Output{Hessian estimate $B_+ \in \Pd_n$}
	      \SetAlgoVlined
	      \DontPrintSemicolon
	      \SetInd{0.3em}{0.5em}
	      \SetKwFor{For}{repeat}{}{}
	      \nl $B_+ \leftarrow \text{\texttt{updateHess\{Corr\}}}(f, \xx, B, \epsilon)$\;
	      \nl Add $(\uu, \uu^T \nabla^2 f(\xx) \uu)$, $(\vv, \vv^T \nabla^2 f(\xx)\vv)$ to $S$ \;
	      \nl \If{reuse}{
	      \nl \For{$\text{m times}$}{
	      \nl \ForEach{$(\sve, s) \in S$}{
	      \nl \lIf{$T \leftarrow B_+ + (s-\sve^T B_+ \sve) \cdot \sve\sve^T \in \Pd_n$}{$B_+ \leftarrow T$
	      }
	      }
	      }
	      }
	      \nl \Return{$B_+$}
	      \caption{{$\texttt{updateHessStore}(f, \xx, B, \epsilon, \text{reuse}, m)$}}%
	     \end{algorithm}%
	\end{minipage}
}%
\hfill\null
\caption{Two implementations of RHE~\eqref{eq:leventhalrule}. Left panel: The Hessian estimation $B$ is updated in every step. Positive definiteness is established by an additional projection step. Right panel: the finite difference approximations for $\uu^T \nabla^2 f(\xx) \uu$ are stored. This information can be used for additional update steps that do not require additional function evaluations. The storage $S$ saves the $O(n^2)$ most recently added elements. If the capacity of $S$ is exceeded, the oldest element is deleted (see main text for further information).}
\label{fig:algonew}
\end{figure}

The projection on $\Pd_n$ is only required if the current iterate $B_+$ is needed for the sampling of the search direction, i.e.\ $\uu \sim \bar{\normal}(\0,B_+^{-1})$. The projection step is not necessary if we let the scheme run until it converges to a matrix $\hat{H}$ that is close to the Hessian $H \in \Pd_n$ (this variant is denoted as $\texttt{updateHess}$ in the supporting online material~\cite{Stich:2014online}).

As an alternative to the projection suggested in~\cite{Leventhal:2011}, we would like propose a different one.
In sub-routine $\texttt{updateHessCorr}$ depicted in Figure~\ref{fig:algonew} we ensure that the generated iterates are always positive definite. If $T$ on line~3 is not positive definite (checked by Wedderburn's formula), we apply a second RHE update step in direction $\vv$, where $\vv$ is an eigenvector of $T$ corresponding to the smallest (hence negative) eigenvalue of $B_+$. By standard matrix perturbation theory, as detailed in Lemma~\ref{lemma:pert} below, the twice updated matrix will be positive definite again (as $H$ is). This scheme comes at the expense of two additional function evaluations at $\xx \pm \epsilon \vv$. The updates in line 3 and 7 can directly be implemented using the Sherman–Morrison formula. This version of the VM update has already been successfully used in a recent numerical study~\cite{Stich:2012a}.

\begin{lemma}
\label{lemma:pert}
Let $A \in \Pd_n$, $\xx \in \R^n$ and $\zz_1 \in \R^n$ an eigenvector corresponding to the smallest eigenvalue of $(A - \xx \xx^T)$. Then
\[
 B := A - \xx \xx^T + \abs{\lambda_{\rm min}(A - \xx \xx^T)} \zz_1 \zz_1^T \in {\Pd}_n \,.
\]
\end{lemma}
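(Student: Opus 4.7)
The plan is to reduce the claim to an eigendecomposition of $T := A - \xx\xx^T$ combined with Cauchy interlacing. Since $T$ is symmetric it admits an orthonormal eigenbasis $\zz_1,\dots,\zz_n$ with real eigenvalues $\lambda_1 \le \lambda_2 \le \cdots \le \lambda_n$, and by hypothesis $\zz_1$ corresponds to $\lambda_1 = \lambda_{\min}(T)$, so the scalar appearing in $B$ is $|\lambda_1|$. Writing $T = \sum_i \lambda_i \zz_i \zz_i^T$ will make the effect of the rank-one correction completely transparent.

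The first key step is to localize the negative eigenvalue. Since $\xx\xx^T$ is rank-one and positive semidefinite, the Cauchy interlacing theorem for rank-one perturbations yields $\lambda_1 \le \mu_1 \le \lambda_2 \le \mu_2 \le \cdots \le \lambda_n \le \mu_n$, where $\mu_1 \le \cdots \le \mu_n$ are the eigenvalues of $A$. Since $A \in \Pd_n$ we have $\mu_1 > 0$, hence $\lambda_i \ge \mu_1 > 0$ for all $i \ge 2$, and only $\lambda_1$ can be $\le 0$.

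The second step is just to read the spectrum of $B$ off the eigenbasis of $T$. Because $\zz_1$ is a unit vector orthogonal to every $\zz_i$ with $i \ge 2$,
\[
B \;=\; T + |\lambda_1|\,\zz_1 \zz_1^T \;=\; (\lambda_1 + |\lambda_1|)\,\zz_1 \zz_1^T \;+\; \sum_{i=2}^{n} \lambda_i\, \zz_i \zz_i^T
\]
is precisely the eigendecomposition of $B$: the offending eigenvalue $\lambda_1$ has been shifted to $\max(2\lambda_1,0) \ge 0$ while the strictly positive eigenvalues $\lambda_2,\dots,\lambda_n$ are untouched, yielding the stated claim.

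The one genuine subtlety is that when $\lambda_1 < 0$ the shifted eigenvalue in the $\zz_1$-direction becomes exactly $0$, so a priori the construction only delivers $B \in \Psd_n$; strict positive definiteness is recovered either by enlarging the correction weight to $|\lambda_1|+\delta$ for any $\delta > 0$ or, as effectively done in Figure~\ref{fig:algonew}, by using the finite-difference weight $\vv^T(\nabla^2 f(\xx))\vv - \vv^T T \vv$, which strictly exceeds $|\lambda_1|$ whenever $\nabla^2 f(\xx)$ is positive definite. The only real technical ingredient is Cauchy interlacing; beyond that the argument is the one-line identity above.
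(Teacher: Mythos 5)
Your proof is correct and takes essentially the same route as the paper's: a spectral decomposition of $T = A - \xx\xx^T$ combined with eigenvalue interlacing for the rank-one perturbation (the paper invokes this as a variant of Weyl's theorem, \cite[Theorem~4.3.4]{horn:1985}) to show that only $\lambda_1$ can fail to be positive, followed by reading the spectrum of $B$ off the eigenbasis. Your closing observation is also accurate and worth keeping: when $\lambda_1 < 0$ the corrected eigenvalue is $\lambda_1 + \abs{\lambda_1} = 0$, so the construction as written only delivers $B \in \Psd_n$ (indeed the paper's own proof concludes merely $\yy^T B \yy \geq 0$), and your proposed remedies---inflating the weight to $\abs{\lambda_1} + \delta$ for some $\delta > 0$, or using the strictly larger finite-difference weight as is effectively done in the algorithm of Figure~\ref{fig:algonew}---are the right way to recover strict positive definiteness.
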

\begin{proof}
The matrix $(A-\xx \xx^T)$ is symmetric. Let $(A - \xx \xx^T) = \sum_{i=1}^n \lambda_i \zz_i \zz_i^T$ denote its spectral decomposition with $\lambda_1 \leq \lambda_2 \leq \dots \lambda_n$ in increasing order. If $\lambda_1 \geq 0$, then there is nothing to show. Otherwise, we observe that by a variant of Weyl's theorem (cf.~\cite[Theorem~4.3.4]{horn:1985}), $0 \leq \lambda_i (A) \leq \lambda_{i+1}(A - \xx \xx^T) = \lambda_{i+1}$ for $i = 1, \dots, n-1$. Thus at most $\lambda_1$ can be negative. We conclude
\[
 \yy^T B \yy = \yy^T \left( \sum_{i=1}^n \lambda_i \zz_i \zz_i^T + \abs{\lambda_1} \zz_1 \zz_1^T \right) \yy \geq \yy^T \left(\lambda_1 \zz_1 \zz_1^T + \abs{\lambda_1} \zz_1 \zz_1^T \right) \yy \geq 0\,,
\]
for all $\yy \in \R^n$. \qed
\end{proof}

The two implementations discussed so far need in every iteration at least two function evaluations to perform the update. In settings where function evaluations are costly or time consuming one could also store previously computed function values and reuse them for the updates. 
If we assume that either the RHE scheme is invoked locally at one fixed point $\xx_0$ (as discussed in Section~\ref{sec:rheconvex}), or the Hessian $\nabla^2 f(\xx_k)$ is only mildly changing between successive iterates $\xx_k$, then the previously computed values $\uu_{k-t}^T \nabla^2 f(\xx_{k-t}) \uu_{k-t}$ back to some horizon $h$, $t=1,\dots, h$, might still be accurate estimates of the curvature in direction $\uu_{k-t}$ at the current position $\xx_k$. 
Thus, one might apply the update~\eqref{eq:leventhalrule} again for directions $\uu_{k-t}$ using the approximation $\uu_{k-t}^T H(\xx_k) \uu_{k-t} \approx \uu_{k-t}^T H(\xx_{k-t}) \uu_{k-t}$. This requires additional computation time but no additional function evaluations. 
This version of the update is presented in sub-routine $\texttt{updateHessCorr}$ depicted in Figure~\ref{fig:algonew} with with horizon $h=O(n^2)$. 
This variant is motivated by the following observation.

\begin{theorem}
\label{lemma:concentration}
Let $0 < \epsilon < 1$ and constant $C > 0$ large enough, according to~\cite[Theorem 4.2]{Adamczak:2010} (see the proof below). Let $U$ be a set of $h =C n^2$ normalized normal vectors $\{\uu_i\}_{i=1}^h$, $\uu_i \sim \bar{\normal}(\0, I_n)$ for $i=1,\dots, h$ and let $H,B \in \Pd_n$. Then for $\uu$ sampled uniformly at random from the (fixed) set $U$, denoted as $\uu \sim U$, it holds for $B_+ = B + \uu^T (H-B) \uu \cdot \uu \uu^T$:
\begin{align*}
\E_{\uu \sim U} \left[ \norm{B_+ - H}_F^2 \right] \leq \left(1 - \frac{(1-  \epsilon) 2 }{n(n+2)}\right) \norm{B-H}_F^2\,,
\end{align*}
with probability at least $1-e^{-\sqrt{n}}$ over the choice of $U$.
\end{theorem}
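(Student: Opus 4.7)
I would first recast the claim as a lower bound on a random self-adjoint operator on $\Sym_n$. Expanding $\norm{B_+-H}_F^2 = \norm{B-H}_F^2 - (\uu^T(B-H)\uu)^2$ exactly as in the proof of Lemma~\ref{lemma:lev1}, averaging over the uniform choice $\uu \sim U$, and writing $X := B-H \in \Sym_n$, the statement reduces to showing that with probability at least $1-e^{-\sqrt{n}}$ over $U$,
\begin{align*}
\frac{1}{h}\sum_{i=1}^h (\uu_i^T X \uu_i)^2 \;\geq\; \frac{(1-\epsilon)\cdot 2}{n(n+2)} \norm{X}_F^2 \qquad \text{for every } X \in \Sym_n.
\end{align*}
Introducing the self-adjoint linear map $T_h : \Sym_n \to \Sym_n$ defined by $T_h(M) := \frac{1}{h}\sum_{i=1}^h (\uu_i^T M \uu_i)\,\uu_i \uu_i^T$ makes the left-hand side equal to $\lin{M, T_h(M)}$ in the Frobenius inner product, so the inequality is equivalent to the operator bound $T_h \succeq \frac{(1-\epsilon)\cdot 2}{n(n+2)} I_{\Sym_n}$.

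To exploit this reformulation I would compute the population counterpart. Polarizing the expression for $\E[(\uu^T A \uu)^2]$ in Lemma~\ref{lemma:factsnormalized} gives
\begin{align*}
T(M) := \E_{\uu \sim \bar{\normal}(\0, I_n)} \!\left[(\uu^T M \uu)\,\uu \uu^T\right] \;=\; \frac{\Tr[M]\, I + 2 M}{n(n+2)},
\end{align*}
whose smallest eigenvalue, viewed as an operator on $\Sym_n$, is exactly $\frac{2}{n(n+2)}$ and is attained on the codimension-one subspace of traceless symmetric matrices. Consequently it suffices to prove the relative concentration $T_h \succeq (1-\epsilon)\, T$ with probability at least $1-e^{-\sqrt{n}}$ whenever $h = C n^2$, and then combine with this smallest-eigenvalue bound on $T$.

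This last step is a statement about the empirical covariance of the unit-norm random vectors $V_i := \uu_i \uu_i^T$ viewed in $\Sym_n$, a Hilbert space of dimension $d = n(n+1)/2 = \Theta(n^2)$. After the whitening $V_i \mapsto W_i := T^{-1/2} V_i$ the samples $W_i$ become isotropic in $\Sym_n$ and the target inequality becomes the standard empirical-covariance estimate $\norm{\frac{1}{h}\sum_i W_i W_i^T - I_{\Sym_n}}_{\rm op} \leq \epsilon$. For $h = C n^2 \gtrsim d$ this is exactly the regime covered by~\cite[Theorem 4.2]{Adamczak:2010}, whose conclusion supplies the deviation probability $e^{-\sqrt{n}}$ claimed in the statement.

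The main obstacle is verifying that~\cite[Theorem 4.2]{Adamczak:2010} actually applies to the whitened samples $W_i$. The underlying $\uu_i$ are sub-Gaussian on $S^{n-1}$, but $V_i$ is their quadratic image and hence only sub-exponential; worse, the whitening $T^{-1/2}$ has largest eigenvalue $\Theta(n)$, so any pointwise bound on $\norm{W_i}$ inflates to $\Theta(n)$. A naive Matrix Bernstein or Matrix Chernoff argument would therefore require $h = \Omega(n^2 \log n/\epsilon^2)$ samples; to reach $h = C n^2$ with the claimed probability $1-e^{-\sqrt{n}}$ one must exploit the log-concave / $\psi_1$ machinery of~\cite{Adamczak:2010}, controlling the uniform-in-$M$ fluctuations of $\frac{1}{h}\sum_i (\uu_i^T M \uu_i)^2$ via a chaining argument rather than a crude $\epsilon$-net over the unit sphere of $\Sym_n$.
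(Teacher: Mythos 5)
Your reduction is sound up to the point where everything hinges on the concentration step, and that step is exactly where the proposal stops short. Rewriting the claim as $T_h \succeq \frac{2(1-\epsilon)}{n(n+2)}\, I_{\Sym_n}$, computing the population operator $T(M) = \frac{\Tr[M]\,I + 2M}{n(n+2)}$ and its minimal eigenvalue $\frac{2}{n(n+2)}$ on the traceless subspace is all correct and consistent with Lemma~\ref{lemma:lev1}. But the remaining assertion, $T_h \succeq (1-\epsilon)T$ with $h = Cn^2$ samples and failure probability $e^{-\sqrt{n}}$, is precisely the theorem in disguise, and you do not prove it: your last paragraph concedes that the whitening $T^{-1/2}$ has operator norm $\Theta(n)$, that the $V_i = \uu_i\uu_i^T$ are only sub-exponential, and that naive matrix concentration needs $\Omega(n^2\log n)$ samples. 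Moreover, the cited result~\cite[Theorem 4.2]{Adamczak:2010} is not an empirical-covariance estimate in the $\Theta(n^2)$-dimensional space $\Sym_n$; it is a uniform approximation of \emph{fourth moments of one-dimensional marginals}, i.e.\ a bound on $\sup_{\yy\in S^{n-1}}\bigl\lvert \E_{\uu\sim U}\lin{\uu,\yy}^4 - \E_{\normal}\lin{\uu,\yy}^4\bigr\rvert$ with the supremum taken over the sphere in $\R^n$, not over the unit ball of $\Sym_n$. Invoking it for the whitened samples $W_i$ is therefore not a direct application, and no argument is supplied to close that gap.

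The missing idea, which is the actual content of the paper's proof, is how to pass from the $S^{n-1}$-uniform fourth-moment bound to the full quadratic form over $\Sym_n$ without ever working in the $n^2$-dimensional space: diagonalize $X = B-H$ (replacing $U$ by $VU$ for the diagonalizing orthogonal $V$), expand $(\uu^TX\uu)^2 = \sum_i\lambda_i^2 u_i^4 + \sum_{i\neq j}\lambda_i\lambda_j u_i^2u_j^2$, and then extract the needed empirical moments by testing the Adamczak bound at the specific directions $\yy=\ee_i$ (giving $\E_{U}[u_i^4] \geq \frac{3-\epsilon}{n(n+2)}$) and $\yy=\tfrac{1}{\sqrt2}(\ee_i\pm\ee_j)$ (whose sum cancels the odd terms and pins down $\E_{U}[u_i^2u_j^2]$ to within $\frac{\epsilon}{n(n+2)}$). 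Reassembling these entrywise estimates yields $\E_{\uu\sim U}[(\uu^TX\uu)^2] \geq (1-\epsilon)\,\E_{\uu\sim\bar\normal(\0,I_n)}[(\uu^TX\uu)^2]$, which combined with Lemma~\ref{lemma:lev1} finishes the proof. This polarization-at-coordinate-directions device is what lets $h=Cn^2$ suffice and is absent from your argument; without it (or an equivalent substitute) the proposal does not establish the theorem.
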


\begin{proof}
As in the proof of Lemma~\ref{lemma:lev1} we need to give a lower bound on the expectation $\E_{\uu \sim U}[(\uu^T X \uu)^2]$ for $X = B-H$. Without loss of generality we can assume $\norm{X}_F = 1$. Let $V$ denote an orthogonal matrix such that $VXV^T$ is diagonal, with the vector of eigenvalues $\boldsymbol {\lambda} \in \R^n$ on its diagonal. By considering the set $U' := \{ V\uu \colon \uu \in U\}$ instead of $U$, we can therefore also assume that $X$ is diagonal and write
\begin{align}
 \E_{\uu \sim U'}\left[(\uu^T X \uu)^2\right] =  \E_{\uu \sim U'}\left[ \sum_{i=1}^n \lambda_i^2 u_i^4 + \sum_{i \neq j} \lambda_i \lambda_j u_i^2 u_j^2 \right]\,, \label{eq:est3}
\end{align}
where the subscripts $u_i,\lambda_i$ denote the $i$-th entry of the vectors $\uu$ or $\boldsymbol{\lambda}$. By Lemma~\ref{lemma:factsnormalized} (just set $A=\ee_i \ee_i^T$ and $\xx=\ee_j$, where $\ee_i$ denotes the standard unit vector) we have
\begin{align*}
 \E_{\uu \sim \bar{\normal}(\0,I_n)} [u_i^4] &= \frac{3}{n(n+2)} \,, & \E_{\uu \sim \bar{\normal}(\0,I_n)} [u_i^2 u_j^2] &= \frac{1}{n(n+2)}\,,
\end{align*}
for $i \neq j$. We will now show that the sample approximation, i.e. the expectation over the set $U'$, approximates these values very well with high probability. Theorem 4.2 from~\cite{Adamczak:2010} states that for $\epsilon > 0$, there exist a $C > 0$ (depending only on $\epsilon$), such that with probability at least $1- e^{-\sqrt{n}}$
\begin{align}
 \sup_{\yy \in S^{n-1}} \abs{ \E_{\uu \sim U'} \left[ \lin{\uu,\yy}^4 \right] -  \E_{\uu \sim \bar{\normal}(\0,I_n)} \left[\lin{\uu,\yy}^4\right] } \leq \frac{\epsilon}{n(n+2)} \,, \label{eq:concentrationbd}
\end{align}
if the sample size $h \geq  Cn^2$. Here we have chosen $h$ large enough s.t.~\eqref{eq:concentrationbd} holds with probability at least $1- e^{-\sqrt{n}}$. Hence, the choice $\yy = \ee_i$ gives an estimate of the fourth moment of $u_i$, i.e.
\begin{align}
 \frac{3 - \epsilon}{n(n+2)} \leq  \E_{\uu \sim U'} [u_i^4] \leq\frac{ 3 + \epsilon}{n(n+2)} \,. \label{eq:est1}
\end{align}
The choice $\yy=\frac{1}{\sqrt{2}}(\ee_i \pm \ee_j)$ in~\eqref{eq:concentrationbd} gives us the estimates
\begin{align*}
\frac{12 - 4 \epsilon }{n(n+2)} \leq  \E_{\uu \sim U'} [u_i^4 + u_j^4 + 6 u_i^2 u_j^2 \pm 4 (u_i u_j^3 + u_i^3 u_j) ] \leq \frac{12+4\epsilon}{n(n+2)} \,.
\end{align*}
Adding these two bounds eliminates the last two terms with odd exponents and by subtracting the estimates of $\E_{\uu \sim U'} [u_i^4]$ and $\E_{\uu \sim U'} [u_j^4]$, we get
\begin{align}
\frac{1- \epsilon}{n(n+2)} \leq  \E_{\uu \sim U'} [u_i^2 u_j^2] \leq \frac{1 + \epsilon}{n(n+2)} \,. \label{eq:est2}
\end{align}
Using~\eqref{eq:est1} and~\eqref{eq:est2} in~\eqref{eq:est3}, we get the lower bound
\begin{align*}
 \E_{\uu \sim U'}\left[(\uu^T X \uu)^2\right] \geq (1-\epsilon)  \E_{\uu \sim \bar{\normal}(\0,I_n)}\left[(\uu^T X \uu)^2\right]\,, 
\end{align*}
and the theorem follows from Lemma~\ref{lemma:lev1} which provides a lower bound on $\E_{\uu \sim \bar{\normal}(\0,I_n)}\left[(\uu^T X \uu)^2\right]$. \qed
\end{proof}

\subsection{Computational Experiments}
\label{sec:comp_new}
To complement our theoretical investigation, we conducted a few numerical experiments. We compared the here presented V-RP variants with a number of randomized and deterministic derivative-free algorithms. The set of benchmark functions comprised three quadratic, and one non-convex function.

\subsubsection{Benchmark setting}
The tested variants of V-RP comprise both implementations of RHE that were presented in Figure~\ref{fig:algonew}, in combination with three different implementation of the line search oracle: (i) MATLAB's built-in routine \texttt{fminunc.m}, (ii) an exact line search that needs only two additional function evaluations (three in total on the line) on quadratic functions, by interpolation, and (iii) adaptive step size control from Evolutionary Computation. This last scheme only proves one additional point along the chosen line. The new point is accepted if the function value of the new point is lower than the function value of the previous iterate. In order to ensure that a positive fraction $p$ is accepted on average we use the subroutine \texttt{aSS} detailed in Figure~1 from~\cite{Stich:2012a} with parameters $p=0.27$ and $\sigma = 1$. 
We used the following schemes for our comparison: The Evolution Strategy with Covariance Matrix Adaptation and mirrored sampling and sequential selection (CMA-ES)~\cite{Hansen:2001,Brockhoff:2010}; Implicit Filtering (IMFILL)~\cite{kelly:11}; the classical Down-Hill Simplex algorithm (Nelder-Mead)~\cite{Nelder:1965}, 
 the accelerated version of Nesterov's~\cite{nesterov:randomgradientfree} gradient-free Random Gradient algorithm (Nesterov acc.); Powell's~NEWUOA\cite{powell:06}; and Pattern-Search that is available in MATLAB.
%
%
The full description of the algorithms, as well as the details regarding the parameter selection, we refer to the supporting online material~\cite{Stich:2014online} where the benchmark setting is presented in full detail.

To evaluate the power of adaptation, we tested the algorithms on the following parametric set of functions with increasing curvature. We consider 
\begin{equation*}
f_1(\xx) = \frac{1}{2} \sum_{i=1}^n e^{1 + (i-1) \frac{\log \ell - 1}{n-1}} x_i^2 \,,
\end{equation*}
with parameters $\ell=10^{i}$ for $i=0,\dots, 7$. To test the ``valley-following" abilities of the different algorithms we also include the non-convex Rosenbrock~\cite{Rosenbrock:60} function $f_2$ in the benchmark set:
\begin{equation*}
f_2(\xx) = \sum_{i=1}^{n-1} \left( 100(x_{i+1}-x_{i}^2)^2 +(x_{i}-1)^2 \right)\,.
\end{equation*}
In order to prohibit the tested algorithms from making use of the diagonal structure of the Hessian matrices of $f_1$ we rotate the function domain by generating random rotation matrices $R$ with $RR^T=I_n$ and a shift parameter $\xx_s \sim \normal (0, I_n)$, thus leading to function instances of the form $f(R(\xx-\xx_s))$. We also apply the same transformation and shift to the initial iterate $\xx_0$, which is $\xx_0= \1_n$ for $f_1$ and $\xx_0=\0_n$ for $f_2$.

\subsubsection{Computational results}
We report the average number of function evaluations (FES) needed to reach accuracy $10^{-8}$ on each function (for 31 independent trials). 
A summary of the collected data on $f_1$ for all parameters $\ell$ is presented in Table~\ref{tab:overL}. Table~\ref{tab:newappendix} in the appendix shows more details for $\ell=10^5$.
\begin{table}[tb]
\centering
\scalebox{\sfactor}{
\begin{tabular}{| P{0.6cm} | P{1cm} | P{1cm} | P{1cm} | P{1cm} | P{1cm} | P{1cm} | P{1cm} | P{1cm} |} 
\hline 
$\ell$ & Nesterov Acc. & NEW- UOA & IMFIL & Nelder- Mead & Pattern  & CMA-ES & V-RP (corr) exact & V-RP (store) exact\\ 
\hline  \hline 
0& 3.96  & 0.70  & 4.08 & -& 66.35 & 3.51 & 5.08 & 4.87  \\ \hline 
1& 9.77  & 0.81  & 7.39 & -& 103.61& 3.83 & 6.14 & 5.20  \\ \hline 
2& 37.54 & 2.22  & 8.25 & -& -     & 6.06 & 10.12& 7.70  \\ \hline 
3& 138.25& 6.74  & 10.60& -& -     & 11.20& 17.42& 9.38  \\ \hline 
4& -     & 20.03 & -    & -& -     & 18.56& 27.41& 10.32  \\ \hline 
5& -     & 51.14 & -    & -& -     & 26.45& 38.76& 12.33  \\ \hline 
6& -     & 105.19& -    & -& -     & 34.41& 50.05& 15.52  \\ \hline 
7& -     & -     & -    & -& -     & 42.82& 62.44& 19.44  \\ \hline 
 \end{tabular}

}
\caption{Number of FES/$n^2$ to reach accuracy $10^{-8}$ on $f_1$ with parameter $\ell$, in $n=20$ dimensions (mean over 31 independent runs). A dash `-' indicates that accuracy could not be reached within a budget of 200$n^2$ FES. V-RP equipped with an exact line search oracle.}
\label{tab:overL}
\end{table}
We observe that V-RP-ES (with $\texttt{updateHessStore}$ outperforms all algorithms for $\ell>3$. IMFIL reaches the target accuracy only for $\ell \leq 3$, but in this regime its efficiency is comparable to V-RP. NEWUOA is superior to V-RP for $\ell \leq 3$. For $\ell \geq 4$ NEWUOA needs a rapidly increasing number of FES and can not reach the target accuracy for $\ell=7$. CMA-ES is superior for $\ell \leq 3$ and is outperformed by V-RP for $\ell > 3$. Nelder-Mead fails for all settings to reach the target accuracy (its progress can be observed in Table~\ref{tab:newappendix}. Pattern search is only successful for $\ell \leq 1$ and needs at least a factor of 10 times more FES than all other algorithms. 

The data for Rosenbrock's function is listed in Table~\ref{tab:rosen}. 
\begin{table}[tb]
\centering
\scalebox{\sfactor}{
\begin{tabular}{| p{0.6cm} | P{1cm} | P{1cm} | P{1cm} | P{1cm} | P{1cm} |   P{1cm} | P{1cm} | P{1cm} |} 
\hline 
acc. & F-RP & NEW- UOA & IMFIL & Nelder- Mead & Pattern & CMA-ES & V-RP (corr) ES  & V-RP (store) ES \\ 
\hline  \hline 
$10^{1}$ & 57.21 & 4.73 & -& -& 139.26&  10.36& 24.83& 22.95 \\
$10^{0}$ & 186.64& 10.14& -& -& -     &  26.30& 49.79& 44.52 \\
$10^{-2}$& -     & 12.94& -& -& -     &  31.76& 64.52& 56.18  \\
$10^{-4}$& -     & 14.81& -& -& -     &  33.47& 70.26& 61.02  \\
$10^{-6}$& -     & 16.29& -& -& -     &  34.76& 73.31& 63.23  \\
$10^{-8}$& -     & 17.55& -& -& -     &  35.96& 75.56& 65.16  \\
 \hline \hline 
 sec. & 32.90 & 6.55 & 8588.06 & 14.38 & 1979.45  & 13.96 & 11.62 & 140.89 \\ \hline 
\end{tabular}

}
\caption{Reached accuracy vs. number of FES/$n^2$ on $f_2$ in $n=20$ dimensions (mean over 31 independent runs). A dash `-' indicates that accuracy could not be reached within a budget of 200$n^2$ FES. V-RP equipped with adaptive step size control, see~\cite{Stich:2014online}.}
\label{tab:rosen}
\end{table}
We observe that only V-RP algorithms, CMA-ES, and NEWUOA are able to solve Rosenbrock's function $f_4$ in $n=20$ dimensions. Surprisingly, the NEWUOA outperforms both CMA-ES and all V-RP variants, not only in number of FES but also in computation time. None of the non-adaptive algorithms shows competitive performance.

In summary, our results show that only adaptive schemes like V-RP, CMA-ES and NEWUOA, are competitive algorithms in the presence of ill-conditioning.

\section{Discussion and Conclusion}

\label{sec:discussion}
In this contribution we have analyzed Random Pursuit algorithms that employ (i) a fixed but arbitrary metric (Fixed Metric Random Pursuit) and (ii) a variable metric learning procedure (Variable Metric Random Pursuit). We have detailed convergence proofs and convergence rates for these Random Pursuit algorithms on convex functions. We have used an improved (matrix) quadratic upper bound technique to show expected single-step progress and global convergence of Fixed Metric Random Pursuit on (strictly) convex functions. We have provided exact expressions for the expected progress of the Randomized Hessian estimation scheme (RHE). We have shown that Variable Metric Random Pursuit can achieve almost optimal convergence rate on strongly convex functions that---after a finite learning phase of length at most $O(n^2)$---does not depend on the underlying properties of the unknown Hessian of the function. If the Hessian $H_0$ at the initial search point is close to the Hessian $H$ at the optimum, i.e. $\kappa(H_0^{-1}H) \leq c$ for a constant $c$, it suffices to invoke RHE only once at the beginning.

The numerical experiments show that adaptive schemes are in general (condition number exceeding $10^3$) superior to non-adaptive schemes. For high target accuracy, both V-RP and CMA-ES outperformed the other tested algorithms on the quadratic functions, both in terms of number of FES and time efficiency. NEWUOA shows excellent performance on the non-convex Rosenbrock function. 


A number of theoretical challenges remain. 
For instance, it is still an open question how to analyze Random Pursuit schemes for constrained optimization problems of the form
\begin{align}
 \label{eq:problem_const}
 \min f(x) \quad \text{subject to} \quad x \in \mathcal{C} \, ,
\end{align}
where $\mathcal{C} \subset \R^n$ is a convex set. And it is an open problem to derive convergence guarantees for Random Pursuit schemes on non-convex functions, such as, e.g., on the class of globally convex (or $\delta$-convex) functions \cite{Hu:1989} or on noisy functions with certain bounds on the variance of the noise. Finally, convergence on the important class of non-smooth convex functions is another fundamental challenge for gradient-free optimization that, most likely, needs novel tools and techniques to be developed by the mathematical programming community. 

\subsubsection*{Acknowledgements}
We like to thank the anonymous reviewers whose comments and suggestions very much helped to improve the quality and content of this paper.

\bibliographystyle{spmpsci}      
\bibliography{biblev}   

\appendix
\section{Appendix}

\subsection{Proof of Lemma~\ref{lemma:factsnormalized}}
\begin{proof}
\label{proof:lemma:normalized}
Let $\uu \sim \normal(\0, I_n)$ and let $C \in \Pd_n$ with $C^2 = \Sigma$. The random vector $C\uu$ is $\normal(\0,\Sigma)$ distributed and hence $\ww := C\uu/\norm{C\uu}_{\Sigma^{-1}} = C\uu/\norm{\uu}_2$ has the same distribution as $\vv$ by definition of the normalized distribution. 
Substituting $\vv$ by $\ww$, we obtain expressions that depend only on $\uu$, more precisely, ratios $R_i\bigl( \frac{\uu}{\norm{\uu}_2} \bigr)$ for $i=1,\dotsc,5$ with powers of $\norm{\uu}_2$ in the denominator. For instance, the first and the last one read as:
\begin{align*}
\vv \vv^T = \frac{ C \uu \uu^T C}{\norm{\uu}_2^2} &=: R_1\left( \frac{\uu}{\norm{\uu}_2} \right)\,, &
\norm{\lin{\xx,\vv} \vv}_A^2 = \frac{\norm{\lin{C\xx,\uu} \uu}_{CAC}^2}{\norm{\uu}_2^4} &=: R_5 \left( \frac{\uu}{\norm{\uu}_2} \right)\,.
\end{align*}
Let $S,T  \colon \R^n \to \R$ denote two measurable functions in the random variable $\uu$. We write $S$ and $R$ for short to denote $S(\uu)$ and $T(\uu)$ respectively. Lemma~1 from~\cite{Heijmans:1999} shows that $\E \bigl[\frac{S}{T} \bigr] = \frac{\E[S]}{\E[T]}$ if and only if the covariance $\cov\bigl(\frac{S}{T},T\bigr)= 0$. This follows immediately from $\cov\bigl(\frac{S}{T},T\bigr) = \E[V]- \E \bigl[ \frac{S}{T} \bigr] \E[T]$. We will now apply this result here. The functions $R_i$ for $i=1,\dotsc,5$ do only depend on the direction of the vector $\uu$, but not on its norm. Hence, $R_i$ and $\norm{\uu}_2$ are independent, and especially uncorrelated.

This means that we can calculate the expectations of the numerators and denominators in $R_i$ for $i=1,\dotsc,5$ separately. These values for the numerators follow directly from Lemma~\ref{lemma:factsnormal}, and for the denominators we use $\E\bigl[\norm{\uu}_2^2\bigr] = n$  and $\E\bigl[\norm{\uu}_2^4\bigr] = n(n+2)$, two well-known properties of $\chi^2$-distributed random variables, see e.g.~\cite[Thm.~3.2b.2]{Mathai:1992}. \qed
\end{proof}

The following lemma summarizes some facts about moments of quadratic forms in multivariate normal random variables.

\begin{lemma}
\label{lemma:factsnormal}
Let $\uu \in\normal(\0,\Sigma)$ be drawn from the multivariate normal distribution 
over $\R^n$ with covariance $\Sigma \in \Pd_n$, and let $A \in \Sym_n$ be a symmetric $n \times n$ matrix. Then
\begin{align*}
 \E [ \uu \uu^T ] & = \Sigma\,, &%
 \E [ \uu^T A \uu ] &= \Tr[A \Sigma]\,, &%
 \E [ (\uu^T A \uu)^2 ] = \Tr[A \Sigma]^2 + 2 \Tr[(A\Sigma)^2]\,,
\end{align*}
and for $\xx \in \R^n$,
\begin{align*}
\expect \left[ \lin{\xx,\uu} \uu \right] = \Sigma \xx \, , %
 \quad &\text{and} \quad %
 \expect \left[ \norm{\lin{\xx,\uu} \uu}_A^2 \right] = %
         \Tr[A \Sigma] \norm{\xx}_{\Sigma}^2 %
         + 2 \norm{\xx}_{\Sigma A \Sigma}^2 \, .
\end{align*}
\end{lemma}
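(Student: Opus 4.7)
The plan is to reduce everything to the standard normal case and then compute via diagonalization. Write $\uu = \Sigma^{1/2} \zz$ with $\zz \sim \normal(\0, I_n)$ and $\Sigma^{1/2} \in \Pd_n$ the symmetric square root of $\Sigma$. Every expectation in the lemma becomes a polynomial in the components of $\zz$, and those components are i.i.d.~$\normal(0,1)$ with well-known moments $\E[z_i]=\E[z_i^3]=0$, $\E[z_i^2]=1$, $\E[z_i^4]=3$.

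First I would dispatch the easy identities. The first formula is immediate: $\E[\uu \uu^T] = \Sigma^{1/2} \E[\zz \zz^T] \Sigma^{1/2} = \Sigma^{1/2} I_n \Sigma^{1/2} = \Sigma$. The second follows by linearity and the cyclic property of trace: $\E[\uu^T A \uu] = \E[\Tr(A\uu\uu^T)] = \Tr[A\,\E(\uu\uu^T)] = \Tr[A\Sigma]$. The fourth identity is equally direct: $\E[\lin{\xx,\uu}\uu] = \E[\uu\uu^T]\xx = \Sigma\xx$.

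For the fourth-moment identity $\E[(\uu^T A \uu)^2] = \Tr[A\Sigma]^2 + 2\Tr[(A\Sigma)^2]$, set $B := \Sigma^{1/2} A \Sigma^{1/2}$ so that $\uu^T A \uu = \zz^T B \zz$. Since $B$ is symmetric, diagonalize $B = Q D Q^T$ with $D = \mathrm{diag}(d_1,\dots,d_n)$; the rotated vector $\yy := Q^T \zz$ is again $\normal(\0,I_n)$. Then $\zz^T B \zz = \sum_i d_i y_i^2$, and by independence
\begin{align*}
\E\Bigl[\Bigl(\sum_i d_i y_i^2\Bigr)^2\Bigr]
 = \sum_i d_i^2 \E[y_i^4] + \sum_{i\neq j} d_i d_j \E[y_i^2] \E[y_j^2]
 = 3\sum_i d_i^2 + \sum_{i \neq j} d_i d_j
 = 2\Tr[B^2] + \Tr[B]^2.
\end{align*}
It remains to note, using cyclicity of the trace, that $\Tr[B] = \Tr[\Sigma^{1/2} A \Sigma^{1/2}] = \Tr[A\Sigma]$ and $\Tr[B^2] = \Tr[\Sigma^{1/2} A \Sigma A \Sigma^{1/2}] = \Tr[(A\Sigma)^2]$.

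The last identity is the one I expect to be the main calculation, but it reduces to the quadratic-form-product version of the same Isserlis/Wick identity. Rewriting $\norm{\lin{\xx,\uu}\uu}_A^2 = (\uu^T \xx\xx^T \uu)(\uu^T A \uu)$, I would prove, by exactly the same diagonalization argument applied to the symmetric matrix $\Sigma^{1/2}(\xx\xx^T + tA)\Sigma^{1/2}$ and polarizing in $t$, the identity
\begin{align*}
\E[(\uu^T M_1 \uu)(\uu^T M_2 \uu)] = \Tr[M_1 \Sigma]\Tr[M_2 \Sigma] + 2\Tr[M_1 \Sigma M_2 \Sigma],
\end{align*}
valid for symmetric $M_1,M_2$. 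Applied to $M_1 = \xx\xx^T$ and $M_2 = A$, this gives $\norm{\xx}_\Sigma^2 \Tr[A\Sigma] + 2\,\xx^T \Sigma A \Sigma \xx$, which is exactly $\Tr[A\Sigma]\norm{\xx}_\Sigma^2 + 2\norm{\xx}_{\Sigma A \Sigma}^2$. The only real care needed is to keep track of cyclic permutations inside the trace; nothing beyond elementary linear algebra and moments of i.i.d.~standard Gaussians is required.
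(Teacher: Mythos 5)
Your proposal is correct, and every step checks out: the reduction $\uu = \Sigma^{1/2}\zz$, the diagonalization $B = QDQ^T$ with $\yy = Q^T\zz$ again standard normal, the moment count $3\sum_i d_i^2 + \sum_{i\neq j} d_i d_j = \Tr[B]^2 + 2\Tr[B^2]$, and the polarization in $t$ to extract the mixed identity $\E[(\uu^T M_1\uu)(\uu^T M_2\uu)] = \Tr[M_1\Sigma]\Tr[M_2\Sigma] + 2\Tr[M_1\Sigma M_2\Sigma]$ all go through. The route differs from the paper's in one substantive way: for the two fourth-moment identities the paper does not compute anything, but instead appeals to a reference (Mathai and Provost, Thm.~3.2d.3, with the choice $A_1 = A$, $A_2 = \xx\xx^T$ for the last claim), remarking only that a direct proof "uses the fourth moments of normal random variables." You actually carry out that direct proof. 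What the paper's citation buys is brevity; what your argument buys is a self-contained, elementary derivation that makes visible exactly which Gaussian moments ($\E[z^4]=3$, independence of rotated coordinates) drive the factor of $2$ in $2\Tr[(A\Sigma)^2]$, and the polarization step is a clean device for deducing the mixed product formula from the pure square rather than redoing a Wick-type expansion with two matrices. For the first, second, and fourth identities your argument coincides with the paper's (linearity of expectation applied to $\uu^T A\uu = \Tr[\uu\uu^T A]$ and $\lin{\xx,\uu}\uu = \uu\uu^T\xx$).
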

The first claim immediately follows from the definition and the second and fourth are consequences of it. This can be seen by applying linearity of expectation to the two identities $\uu^T A \uu = \Tr[\uu\uu^T A]$ and $\lin{\xx,\uu}\uu = \uu\uu^T \xx$. To prove the third and fifth equalities directly, one has again to use linearity of expectation, but also the fourth-moments of normal random variables. We omit the this presentation here, as the claims also follow from~\cite[Thm.~3.2d.3]{Mathai:1992} (with the choice $A_1 = A$ and $A_2 = \xx \xx^T$ for the last claim).

\subsection{Matrix diagonalization}
\begin{lemma}
\label{lemma:2x2}
Let $n \geq 1$ and consider the following $2\times 2$ matrix:
\[
C(n) := \begin{bmatrix}
            1-2\eta & -\eta          \\
            2\eta   & 1- (2n+3)\eta 
          \end{bmatrix} \,,
\]
where $\eta = \frac{1}{n(n+2)}$.
Then
\[
C(n) = 
\begin{bmatrix}
\frac{2n + 1 - \omega}{4 \omega} & \frac{2n + 1 + \omega}{4 \omega} \\
\frac{1}{\omega} & \frac{1}{\omega}
\end{bmatrix}
\begin{bmatrix}
\lambda_1 & 0 \\
0 & \lambda_2 
\end{bmatrix}
\begin{bmatrix}
-2 & \frac{\omega +2n +1}{2}\\
2 & \frac{\omega -2n -1}{2} 
\end{bmatrix}\,,
\]
with $\omega = \sqrt{4n^2 + 4n -7}$,
\begin{align*}
 &\lambda_1 = \frac{2n^2 + 2n - 5 - \omega}{2 n (n+2)} \,,
 &\lambda_2 = \frac{2n^2 + 2n - 5 + \omega}{2 n (n+2)}  \,.
\end{align*}
\end{lemma}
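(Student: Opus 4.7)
The plan is to treat this as a straightforward eigendecomposition of a $2\times 2$ matrix, so the whole proof is pure algebra. First I would compute the trace and determinant:
$$\operatorname{tr}(C(n)) = 2 - (2n+5)\eta, \qquad \det(C(n)) = (1-2\eta)(1-(2n+3)\eta) + 2\eta^2.$$
Then the characteristic polynomial is $\lambda^2 - \operatorname{tr}(C)\lambda + \det(C) = 0$, whose discriminant I would show equals $\omega^2 \eta^2$. Concretely, one expands
$$\operatorname{tr}(C)^2 - 4\det(C) = \bigl((2n+5)^2 - (16n+32)\bigr)\eta^2 = (4n^2 + 4n - 7)\eta^2 = \omega^2\eta^2.$$
Substituting back into the quadratic formula and rewriting $1$ as $n(n+2)/(n(n+2))$ immediately yields the claimed eigenvalues $\lambda_1, \lambda_2$.

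Next I would verify the eigenvectors. Writing $P$ and $P^{-1}$ for the two outer matrices in the claimed factorization, it suffices to check $C(n)\,P = P\,\operatorname{diag}(\lambda_1,\lambda_2)$ column by column, i.e.\ that the $i$-th column of $P$ is an eigenvector with eigenvalue $\lambda_i$. For the first column $\bigl(\tfrac{2n+1-\omega}{4\omega}, \tfrac{1}{\omega}\bigr)^T$, a short computation gives
$$1 - 2\eta - \lambda_1 = \frac{2n+1+\omega}{2n(n+2)},$$
so the first component of $(C(n) - \lambda_1 I)v_1$ becomes
$$\frac{(2n+1+\omega)(2n+1-\omega) - 8}{8\omega\, n(n+2)} = \frac{(2n+1)^2 - \omega^2 - 8}{8\omega\, n(n+2)} = 0,$$
using the key simplification $(2n+1)^2 - \omega^2 = 8$. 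The second component of $(C(n)-\lambda_1 I)v_1$ reduces similarly by computing $1-(2n+3)\eta - \lambda_1$ and using the same identity. The column for $\lambda_2$ follows by replacing $\omega$ with $-\omega$ in the analogous identities.

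Finally, I would confirm that the third matrix in the factorization is indeed $P^{-1}$. The determinant of $P$ is $\det P = \tfrac{1}{\omega}\bigl(\tfrac{2n+1-\omega}{4\omega} - \tfrac{2n+1+\omega}{4\omega}\bigr) = -\tfrac{1}{2\omega}$, so
$$P^{-1} = -2\omega \begin{bmatrix} \frac{1}{\omega} & -\frac{2n+1+\omega}{4\omega} \\ -\frac{1}{\omega} & \frac{2n+1-\omega}{4\omega} \end{bmatrix} = \begin{bmatrix} -2 & \frac{\omega + 2n + 1}{2} \\ 2 & \frac{\omega - 2n - 1}{2} \end{bmatrix},$$
matching the claimed expression exactly.

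The main obstacle, if any, is purely bookkeeping: keeping track of signs of $\omega$ in the two eigen-calculations and repeatedly using the identity $(2n+1)^2 - \omega^2 = 8$ to cancel cross terms. There is no conceptual difficulty, since $C(n)$ is $2\times 2$ and diagonalizable whenever $\omega^2 > 0$ (i.e.\ for all $n \geq 1$, since $4n^2 + 4n - 7 > 0$).
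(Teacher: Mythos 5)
Your proposal is correct, and all the computations check out: the discriminant identity $\operatorname{tr}(C)^2-4\det(C)=(4n^2+4n-7)\eta^2$, the key cancellation $(2n+1)^2-\omega^2=8$, and $\det P=-\tfrac{1}{2\omega}$ are all right, so the eigenvalues, eigenvectors, and the inverse factor all match the claimed decomposition. The paper's own proof is a one-line "verify by multiplying the three matrices," so your argument is essentially the same elementary $2\times2$ verification; the only difference is that you additionally \emph{derive} the eigenvalues from the characteristic polynomial and check $CP=PD$ column by column, which explains where the decomposition comes from rather than merely confirming it.
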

\begin{proof}
The claim can be verified by calculating the product of the three matrices. 
\end{proof}


\subsection{Additional empirical data}
\begin{table}[h!]
\centering
\scalebox{\sfactor}{
\begin{tabular}{| p{0.6cm} | P{1cm} | P{1cm} | P{1cm} | P{1cm} | P{1cm} | P{1cm} | P{1cm} |   P{1cm} | P{1cm} |} 
\hline 
acc. & Nesterov Acc. & NEW- UOA & IMFIL & N-M & Pattern  & CMA-ES & V-RP (corr) exact & V-RP (store) exact \\ 
\hline  \hline 
$10^{4}$ & 2.21 & 0.09 & 1.59 & 0.86 & 2.87 & 0.71 & 0.27 & 0.20 \\
$10^{3}$ & 79.53& 0.33 & 5.95 & 2.83 & 59.55& 2.87 & 1.59 & 1.30 \\
$10^{2}$ & -    & 1.44 & 54.07& 18.63& -    & 7.18 & 6.79 & 5.56  \\
$10^{0}$ & -    & 14.35& -    & -    & -    & 16.35& 22.97& 8.16  \\
$10^{-2}$& -    & 29.21& -    & -    & -    & 21.87& 30.26& 9.20  \\
$10^{-4}$& -    & 38.96& -    & -    & -    & 24.50& 34.22& 10.25 \\
$10^{-6}$& -    & 45.76& -    & -    & -    & 25.52& 36.76& 11.26 \\
$10^{-8}$& -    & 51.14& -    & -    & -    & 26.45& 38.76& 12.33 \\
 \hline \hline 
 sec. & 21.96 & 50.67 & 8273.24 & 16.28 & 1863.09 & 10.08 & 7.15 & 26.35 \\ \hline 
\end{tabular}

}
\caption{Reached accuracy vs. number of FES/$n^2$ on $f_1$ with parameter $\ell = 10^5$ in $n=20$ dimensions (mean over 31 independent runs). A dash `-' indicates that accuracy could not be reached within a budget of 200$n^2$ FES. Average computation time of a single run on a single core CPU: either the time until the budget of 200$n^2$ FES is exceed or the time needed to reach accuracy $10^{-8}$.}
\label{tab:newappendix}
\end{table}

\cleardoublepage
\thispagestyle{plain}

\begin{center}
\textsc{Supporting Online Material for}\\[2em]
{ {\Large {{Variable Metric Random Pursuit}$^*$}}}
\end{center}
~\\
\begin{center}
\textsc{S.~U. Stich$^\dagger$, C.~L. M\"{u}ller$^\ddagger$, and B. G\"{a}rtner$^\S$}
\end{center}

\newpage
\section{Computational Experiments}
\label{sec:numerical}
In \cite{Stich:2012a} we have already presented extensive numerical results of V-RP in comparison with other randomized variable metric schemes. There, we analyzed the influence of the Hessian eigenvalue )]spectrum on the convergence of these schemes. The main result from these tests were that among a parametrized set of Hessian matrices with equal trace and condition number, the matrices with a sigmoidal spectrum are the most difficult to learn for the RHE update scheme and matrices with an inverse sigmoidal (almost flat) distribution of eigenvalues are easier to learn.

We here compare the performance of V-RP with a number of randomized and \emph{deterministic} derivative-free algorithms. The set of test functions comprises three quadratic functions (including $f_1$) with different spectra and one non-convex function ($f_2$). We first present the definition of the test functions and describe the numerical performance evaluation protocol. We then detail the algorithms and their parametrization.

\subsection{Benchmark Functions}
\label{sec:benchfun}
The first two functions are $f_1$ and $f_2$ (see definition in Section~\ref{sec:comp_new}). 
We consider two additional quadratic functions with parameter $\ell \geq 1$.
\begin{align*}
f_3(\xx) &= \frac{1}{2} \left( \sum_{i=1}^{\lceil\frac{n}{2}\rceil} x_i^2 + \ell \sum_{i= \lfloor\frac{n}{2} \rfloor}^{n} x_i^2 \right)\,, & 
f_4(\xx) &= \frac{1}{2} \left( x_1^2 + \frac{\ell}{2} \sum_{i=2}^{n-1}  x_i^2 +  \ell x_n^2 \right)\,,
\end{align*}
The Hessian matrices in both functions have the same maximal ($\ell$) and minimal ($1$) eigenvalues. The function $f_3$ has two different scales that are distributed evenly among the dimensions. The second function $f_4$ has -- for large dimension -- one global scale with one small and one large eigenvalue. A previous numerical study \cite{Stich:2012a} suggests that function $f_3$ is challenging for RP algorithms and $f_4$ is easy among all convex quadratic functions with the same condition number and trace.

 The functions $f_3$ and $f_4$ are limit cases of these function classes and can be considered as worst and best cases. 

The quadratic functions attain their minimum function value at $\xx^*=\0_n$, the all zero vector ($f_1(\0_n)=f_3(\0_n)=f_4(\0_n)=0$). The Rosenbrock function is minimized at $\xx^* = \1_n$, with $f_2(\1_n)=0$.

In order to prohibit the tested algorithms from making use of the diagonal structure of the Hessian matrices of $f_1$, $f_3$ and $f_4$, we rotate the function domain by generating random rotation matrices $R$ with $RR^T=I_n$ and a shift parameter $\xx_s \sim \normal (0, I_n)$, thus leading to function instances of the form
\[
 f(R(\xx-\xx_s)) \, .
\]
We also apply the same transformation and shift to the initial iterate $\xx_0$. This procedure  and/or the special structure of $\xx^*$. We use as initial iterate $\xx_0= \1_n$ for the quadratic functions and $\xx_0=\0_n$ for $f_2$.

\subsection{Algorithms}
\subsubsection{V-RP schemes}
\label{sec:lsimplement}
We implemented F-RP with fixed covariance $\Sigma=I_n$. This scheme is simply referred to as RP. We also implemented two RHE schemes presented in this paper, \texttt{updateHessCorr} and \texttt{updateHessStore}. The parameter setting for the latter one can be found in Fig.~\ref{fig:algonew} with $m=10$. The choice of the parameter $\epsilon$ does not influence the performance of the schemes on the quadratic functions. We thus set it to $\epsilon = 1$. For $f_4$ we used $\epsilon = 10^{-6}$. For \texttt{updateHessStore} we apply the updates from the storage every $n$-th iteration in random order, starting after the $n^2$-th iteration (as soon as enough data is collected).
All RP schemes have to be combined with an implementation of the line search oracle. We tested three different implementations and present them here in increasing order of function evaluations they consume.

\paragraph{ES:} This scheme is also know as (1+1)-Evolution Strategy (ES). To perform the line search, only one additional point along the chosen line is probed. The scheme accepts the new point if the function value of the new point is lower than the function value of the previous iterate. In order to ensure that a positive fraction $p$ is accepted on average we use an adaptive step size scheme (aSS) as detailed in the procedure \texttt{aSS} in Figure~1 from~\cite{Stich:2012a} with parameters $p=0.27$ and $\sigma = 1$.

\paragraph{Exact:} By probing two additional points on the given line (three in total), the exact minimizer can be computed if the function is quadratic ($f_1$,$f_3$,$f_4$). For $f_2$ this scheme may fail to report a better value but we observed in our experiments that the quality of the guessed minimizer is sufficient.

\paragraph{Matlab:} We use the built-in MATLAB routine \texttt{fminunc.m} from the optimization toolbox~\cite{MATLAB:2012} with \texttt{optimset('TolX'=$0.01$)} as approximate line search. In the present gradient-free setting \texttt{fminunc.m} uses a mixed cubic/quadratic polynomial line search where the first three points bracketing the minimum are found by bisection~\cite{MATLAB:2012}.

\subsubsection{CMA-ES}
The Evolution Strategy with Covariance Matrix Adaptation~\cite{Hansen:2001} (CMA-ES) is one of the most popular and efficient schemes for derivative free optimization on non-convex and noisy problems. New search points are sampled from a multivariate normal distribution whose parameter are updated in each iteration. The fundamental design principle used here is slightly different than for the V-RP schemes.  Instead of performing the updates on an estimation of the Hessian (and then computing its inverse), the updates are performed directly on the inverse directly.
The CMA-ES scheme is augmented by an auxiliary variable called evolution path that takes into account the correlation of successive means taken over a finite horizon. This is similar in spirit to Rao-Blackwellization techniques in Marko Chain Monte Carlo methods \cite{Andrieu:2008} and Polyak's heavy ball method in first-order optimization \cite{Polyak:1987}.

Among the many different instances of CMA-ES, we consider here the one that is the fastest scheme for quadratic functions known today. This scheme is called the (1,4)-CMA-ES with mirrored sampling and sequential selection. We also refer to~\cite{Brockhoff:2010} for a full description of this scheme and all parameter settings used. The scale parameter is set to $\sigma=1$ for our experiments. The code for the (1,4)-CMA-ES scheme has been retrieved from \texttt{http://coco.gforge.inria.fr/doku.php?id=bbob-2010-results}.

\subsubsection{Nesterov's Random Gradient schemes}
Nesterov~\cite{nesterov:randomgradientfree} introduced a derivative-free optimization scheme that is very similar to RP. Optimization is performed iteratively among randomly chosen lines. The optimal step size is estimated by finite-difference estimation. This scheme is called Random Gradient (RG) method. Its advantage over RP is that the finite-difference calculation needs only one additional function evaluation, and it is guaranteed to make progress in every iteration (opposed to the ES line search). One disadvantage is that the RG method needs an estimate of the curvature of the function which is not available in practice. For test purposes, we always use the correct curvature of the objective function (parameter $\ell$) as input to the RG scheme.

Similar to the accelerated gradient methods for convex optimization, an accelerated version of the RG scheme is available~\cite{nesterov:randomgradientfree}. This scheme also needs only two function evaluation per iteration and shows superior theoretical convergence properties \cite{nesterov:randomgradientfree}.

\subsubsection{Pattern Search}
Pattern Search is a deterministic scheme that evaluates the objective function in every iteration on $2n$ predefined points on a stencil. We use the built-in MATLAB routine \texttt{patternsearch} from the Global Optimization Toolbox~\cite{MATLAB:2012} with parameters \texttt{Cache}$=$\texttt{on}, \texttt{InitialMeshSize}=$1$, \texttt{TolMesh}=$1e$-$20$, \texttt{TolX}=$1e$-$20$, \texttt{TolFun}=$1e$-$20$.

\subsubsection{Nelder-Mead}
We use the built-in MATLAB routine \texttt{fminsearch} from the Optimization Toolbox~\cite{MATLAB:2012} which implements the classical Nelder-Mead (N-M) Down-Hill Simplex algorithm~\cite{Nelder:1965}. We use the algorithm with parameters \texttt{TolX}=$1e$-$20$, \texttt{TolFun}=$1e$-$20$.

\subsubsection{NEWUOA}
NEWUOA~\cite{powell:06} is an iterative algorithm that builds a quadratic model of the objective function. Steps are proposed by minimizing this model within a trust region. When the quadratic model is updated, the new model interpolates the objective function in \texttt{npt} points, typically \texttt{npt}=$2n+1$. We use the C implementation made available by M. Guilbert on \texttt{http://www.inrialpes.fr/bipop/ people/guilbert/newuoa/newuoa.html}. This code is based on the original FORTRAN implementation of NEWUOA by Powell. We use the standard setting \texttt{npt}=$2n+1$ and $\rho_{\rm beg} = 1 $, $\rho_{\rm end} = 10^{-14}/\ell$.

\subsubsection{Implicit Filtering}
Implicit filtering (IMFIL) is a hybrid of a Quasi-Newton and a VM scheme. The gradients and Hessians are approximated by finite differences.
We use the MATLAB code by Kelly~\cite{kelly:11}, available on \texttt{http://www4.ncsu.edu/\textasciitilde ctk/imfil.html} with the setting \texttt{smooth\_problem}=$1$ and \texttt{bscales}$=(1, 2^{-1}, \dots, 2^{-100})$ to avoid premature convergence.

\subsection{Convergence on the function pair $f_3/f_4$}
We test the convergence of all algorithms on $f_3$ and $f_4$ for dimension $n=20$. We performed 31 independent trials of the same experiment. We let the algorithms run until either the accuracy $10^{-8}$ was successfully reached, or a budget of total 200$n^2$ function evaluations (FES) was consumed. In addition to the number of FES we also recorded the run time (in seconds) needed to perform a single trial. All algorithms where executed on a single core. We report the number of function evaluations performed by the algorithm to reduce the function value by one order of magnitude. 

We first focus on the results for function $f_1$ as it presents a kind of worst-case scenario for adaptive schemes. The data for the benchmark set is listed in Table~\ref{tab:appendix_twosphere}. Table~\ref{tab:sec6_1} shows a subset of these data, neglecting non-adaptive schemes as well as some combinations of V-RP and line search implementation. The data in Table~\ref{tab:sec6_1} are graphically depicted in Fig.~\ref{fig:sec6_1}. We observe that among the successful algorithms (CMA-ES, V-RP ES, and V-RP Exact) CMA-ES needed about a factor of 3.4 more FES to reach accuracy $10^{-9}$ than both V-RP schemes but only needs half the run time. The other four algorithms (NEWUOA, IMFIL, Nelder-Mead, Pattern search) only managed to reach accuracy $10^{0}$ - $10^{2}$ with the same budget of FES. With the exception of Nelder-Mead their execution time is exceeding the time of CMA-ES by a factor of 14--190. 

We also observe that all seven tested algorithms make rapid progress at the beginning (up to accuracy roughly $10^{2}$). They then get either stuck or---after a learning phase---resume fast convergence toward higher accuracy levels (roughly $10^{-2}$-$10^{-8}$). These phases are typical for these kind of algorithms (see the  discussion in Section~\ref{sec:numericssec5} below).

\def \sfactor{0.89}
\begin{table}
\scalebox{\sfactor}{
\begin{tabular}{| p{0.6cm} | P{1.3cm} | P{1.3cm} | P{1.3cm} | P{1.3cm} | P{1.3cm} | P{1.3cm} | P{1.3cm} |} 
\hline 
acc. & NEWUOA & IMFIL & N-M & Pattern & CMA-ES & V-RP \newline ES & V-RP Exact \\ 
\hline  \hline 
$10^{7}$ & 0.06  & 1.58  & 0.74  & 2.18  & 0.23  & 0.68  & 0.11  \\
 $10^{6}$ & 0.13  & 2.32  & 1.72  & 6.52  & 0.57  & 1.21  & 0.30  \\
 $10^{5}$ & 0.17  & 3.07  & 2.52  & 10.63  & 0.93  & 1.77  & 0.53  \\
 $10^{4}$ & 0.23  & 3.91  & 3.19  & 15.32  & 1.28  & 2.29  & 0.80  \\
 $10^{3}$ & 0.28  & 4.68  & 3.71  & 19.79  & 1.62  & 2.83  & 1.09  \\
 $10^{2}$ & 0.34  & 5.59  & 4.11  & 25.18  & 2.27  & 3.37  & 1.40  \\
 $10^{1}$ & 0.40  & - & 6.58  & - & 15.29  & 6.15  & 2.97  \\
 $10^{0}$ & 175.60  & - & - & - & 30.44  & 11.45  & 13.89  \\
 $10^{-1}$ & - & - & - & - & 45.58  & 13.75  & 17.52  \\
 $10^{-2}$ & - & - & - & - & 55.81  & 14.74  & 19.25  \\
 $10^{-3}$ & - & - & - & - & 62.74  & 15.68  & 20.07  \\
 $10^{-4}$ & - & - & - & - & 66.43  & 16.61  & 20.61  \\
 $10^{-5}$ & - & - & - & - & 69.65  & 17.49  & 21.11  \\
 $10^{-6}$ & - & - & - & - & 70.65  & 18.49  & 21.65  \\
 $10^{-7}$ & - & - & - & - & 71.81  & 19.42  & 22.17  \\
 $10^{-8}$ & - & - & - & - & 72.23  & 20.37  & 22.75  \\
 \hline \hline 
 sec. & 438.34 & 5711.35 & 10.70 & 1904.77 & 29.63 & 45.37 & 35.92 \\ 
 \hline 
\end{tabular}

}
\caption{Accuracy vs. number of FES/$n^2$ on $f_3$ with parameter $\ell=10^{7}$ in $n=20$ dimensions (mean over 31 independent runs). A dash `-' indicates that accuracy could not be reached within a budget of 200$n^2$ FES. V-RP is implemented with update scheme \texttt{updateHessStore} for two different implementations of the line search (ES and Exact). Average computation time of a single run on a single core CPU; the time until the budget of 200$n^2$ FES is exceeded or the time needed to reach accuracy $10^{-8}$.
}
\label{tab:sec6_1}
\end{table}

\begin{figure}[ht]
\centering
\includegraphics[width=.95\textwidth]{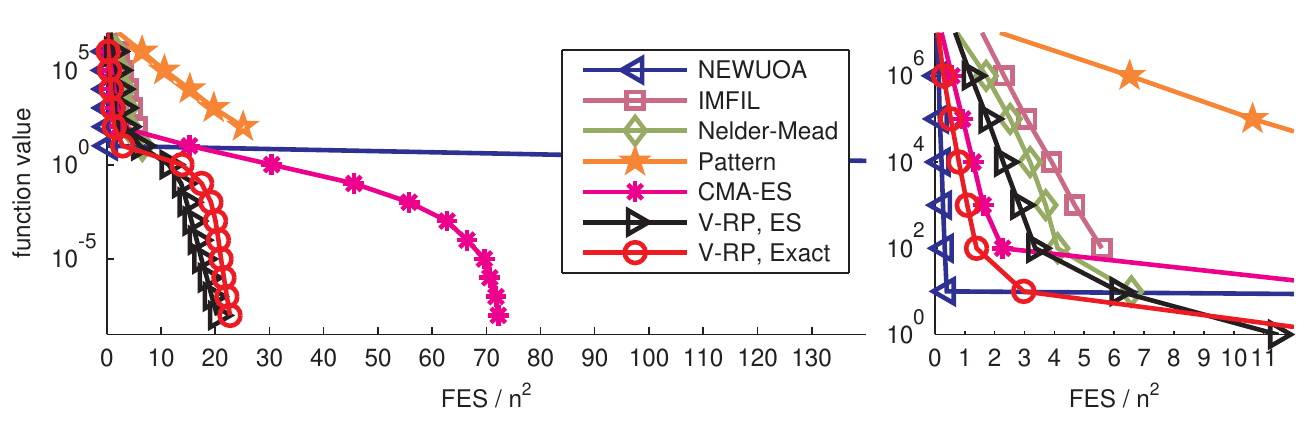}
\caption{Reached accuracy vs. number of FES/$n^2$ on $f_3$ with parameter $\ell=10^{7}$ in $n=20$ dimensions (mean over 31 independent runs). See Tables~\ref{tab:sec6_1} and~\ref{tab:appendix_twosphere}. V-RP is implemented with update scheme \texttt{updateHessStore} for two different implementations of the line search (ES and Exact).}
\label{fig:sec6_1}
\end{figure}

The empirical results on function $f_4$ reveal several interesting features. NEWUOA, CMA-ES, and V-RP all show faster convergence on this function compared to $f_3$ (in accordance with previous experiments \cite{Stich:2012a}). These schemes also solve the problem to high accuracy. All other algorithms show, however, reduced performance on $f_4$ when compared to $f_3$ (see data in Tables~\ref{tab:appendix_twosphere} and ~\ref{tab:appendix_flatsphere}). Both Pattern Search and Nesterov's schemes do not reach an accuracy below $10^5$. RP ES, IMFIL, and N-M need a considerably higher number of FES to reach an accuracy of $10^3$. 

In summary, these results show that only adaptive schemes and, to some extent, NEWUOA, are competitive algorithms in the presence of ill-conditioning. The results also suggest that the performance of popular methods such as Implicit Filtering and Nelder-Mead (the standard derivative-free method in MATLAB) are not suitable \emph{even for problems where only a few dimensions are not on the same scale} (such as $f_4$).  

\subsection{Evaluating the power of adaptation}
Given practical limitations on the available budget of function evaluations it is natural to ask whether function evaluations should be rather spent on estimating the Hessian or for direct function optimization. In order to evaluate the power of adaptation we test the described algorithms on Rosenbrock's function and the following parametric set of functions with increasing curvature. We consider $f_1$ with parameters $\ell=10^{i}$ for $i=0,\dots, 7$. For $i=0$ this function equals the so-called sphere function $f(\xx)=\frac{1}{2} \xx^T \xx$. We report the average number of FES needed to reach accuracy $10^{-8}$ on each function (for 31 independent trials). The data of the full benchmark set are listed in Tables~\ref{tab:appendix_linexp0}-\ref{tab:appendix_linexp7}. Table~\ref{tab:overL} shows a subset of algorithms including Nesterov's accelerated RG scheme.

We observe that V-RP-ES (with $\texttt{updateHessStore}$ and line search ES) outperforms all algorithms for $\ell>3$. Nesterov's non-adaptive accelerated RG scheme is superior to V-RP only for the isotropic case $\ell=0$. IMFIL reaches the target accuracy only for $\ell \leq 3$, but in this regime it is more efficient than V-RP. NEWUOA is superior to V-RP for $\ell \leq 3$. For $\ell \geq 4$ NEWUOA needs a rapidly increasing number of FES and can not reach the target accuracy for $\ell=7$. CMA-ES is superior for $\ell \leq 3$ and is outperformed by V-RP for $\ell > 3$. Nelder-Mead fails for all settings to reach the target accuracy. 
Pattern search is only successful for $\ell \leq 1$ and needs at least a factor of 10 times more FES than all other algorithms. 

Finally, only the V-RP algorithms, CMA-ES, and NEWUOA are able to solve Rosenbrock's function $f_2$ in $n=20$ dimensions (see Table~\ref{tab:rosen} for all data). 
Surprisingly, the NEWUOA outperforms both CMA-ES and all V-RP variants. None of the non-adaptive algorithms shows competitive performance. Pattern search and standard RP with ES line search reach an accuracy of $10^1$ and $10^0$, respectively. Implicit filtering, Nelder-Mead, and Nesterov's schemes even fail to get an accuracy of $10^1$.

In summary, the empirical results from the presented benchmark clearly show the superiority of adaptive schemes such as V-RP and CMA-ES.

\def \sfactor{0.65}
\begin{table}
\hspace{-1.8cm}
\scalebox{\sfactor}{
\begin{tabular}{| p{0.6cm} | P{1.3cm} | P{1.3cm} | P{1.3cm} | P{1.3cm} | P{1.3cm} | P{1.3cm} | P{1.3cm} |   P{1.3cm} | P{1.3cm} | P{1.3cm} | P{1.3cm} | P{1.3cm} | P{1.3cm} | P{1.3cm} |} 
\hline 
acc. & RP \newline ES & NEWUOA & IMFIL & N-M & Pattern & Nesterov RG & Nesterov Acc. & CMA-ES & V-RP (corr) ES & V-RP (corr) Exact & V-RP (corr) matlab & V-RP (store) ES & V-RP (store) Exact & V-RP (store) matlab \\ 
\hline  \hline 
$10^{7}$ & 0.10  & 0.06  & 1.58  & 0.74  & 2.18  & 0.36  & 0.27  & 0.23  & 0.72  & 0.19  & 0.35  & 0.68  & 0.11  & 0.22  \\
 $10^{6}$ & 0.25  & 0.13  & 2.32  & 1.72  & 6.52  & 0.90  & 0.92  & 0.57  & 1.27  & 0.61  & 1.28  & 1.21  & 0.30  & 0.74  \\
 $10^{5}$ & 0.40  & 0.17  & 3.07  & 2.52  & 10.63  & 1.47  & 2.54  & 0.93  & 1.80  & 0.93  & 2.08  & 1.77  & 0.53  & 1.37  \\
 $10^{4}$ & 0.55  & 0.23  & 3.91  & 3.19  & 15.32  & 2.04  & 3.91  & 1.28  & 2.30  & 1.24  & 2.88  & 2.29  & 0.80  & 1.97  \\
 $10^{3}$ & 0.71  & 0.28  & 4.68  & 3.71  & 19.79  & 2.60  & 5.05  & 1.62  & 2.80  & 1.55  & 3.59  & 2.83  & 1.09  & 2.64  \\
 $10^{2}$ & 0.88  & 0.34  & 5.59  & 4.11  & 25.18  & 3.18  & - & 2.27  & 3.37  & 1.84  & 4.51  & 3.37  & 1.40  & 3.49  \\
 $10^{1}$ & - & 0.40  & - & 6.58  & - & 3.92  & - & 15.29  & 17.93  & 11.47  & 32.47  & 6.15  & 2.97  & 12.42  \\
 $10^{0}$ & - & 175.60  & - & - & - & - & - & 30.44  & 34.11  & 44.37  & 86.42  & 11.45  & 13.89  & 30.24  \\
 $10^{-1}$ & - & - & - & - & - & - & - & 45.58  & 40.75  & 52.29  & 101.82  & 13.75  & 17.52  & 36.26  \\
 $10^{-2}$ & - & - & - & - & - & - & - & 55.81  & 45.26  & 59.92  & 113.60  & 14.74  & 19.25  & 39.83  \\
 $10^{-3}$ & - & - & - & - & - & - & - & 62.74  & 48.28  & 65.66  & 121.34  & 15.68  & 20.07  & 41.49  \\
 $10^{-4}$ & - & - & - & - & - & - & - & 66.43  & 50.45  & 69.75  & 127.00  & 16.61  & 20.61  & 42.54  \\
 $10^{-5}$ & - & - & - & - & - & - & - & 69.65  & 52.16  & 72.68  & 131.63  & 17.49  & 21.11  & 43.62  \\
 $10^{-6}$ & - & - & - & - & - & - & - & 70.65  & 53.72  & 74.87  & 135.02  & 18.49  & 21.65  & 44.86  \\
 $10^{-7}$ & - & - & - & - & - & - & - & 71.81  & 54.99  & 76.78  & 137.87  & 19.42  & 22.17  & 46.00  \\
 $10^{-8}$ & - & - & - & - & - & - & - & 72.23  & 56.15  & 78.39  & 140.34  & 20.37  & 22.75  & 47.10  \\
 \hline \hline 
 sec. & 30.77 & 438.34 & 5711.35 & 10.70 & 1904.77 & 18.26 & 22.68 & 29.63 & 16.83 & 18.65 & 85.60 & 45.37 & 35.92 & 521.82 \\ \hline 
\end{tabular}

}
\caption{Reached accuracy vs. number of FES/$n^2$ on $f_3$ with parameter $\ell=10^{7}$ in $n=20$ dimensions (mean over 31 independent runs). A dash `-' indicates that accuracy could not be reached withing a budget of 200$n^2$ FES. See main text for further information.}
\label{tab:appendix_twosphere}
\end{table}
\begin{table}
\hspace{-1.8cm}
\scalebox{\sfactor}{
\begin{tabular}{| p{0.6cm} | P{1.3cm} | P{1.3cm} | P{1.3cm} | P{1.3cm} | P{1.3cm} | P{1.3cm} | P{1.3cm} |   P{1.3cm} | P{1.3cm} | P{1.3cm} | P{1.3cm} | P{1.3cm} | P{1.3cm} | P{1.3cm} |} 
\hline 
acc. & RP \newline ES & NEWUOA & IMFIL & N-M & Pattern & Nesterov RG & Nesterov Acc. & CMA-ES & V-RP (corr) ES & V-RP (corr) Exact & V-RP (corr) matlab & V-RP (store) ES & V-RP (store) Exact & V-RP (store) matlab \\ 
\hline  \hline 
 $10^{6}$ & 0.01  & 0.05  & 1.26  & 0.29  & 0.01  & 0.35  & 0.25  & 0.01  & 0.02  & 0.01  & 0.02  & 0.02  & 0.01  & 0.02  \\
 $10^{5}$ & 0.05  & 0.08  & 4.45  & 0.35  & 0.71  & 0.94  & 0.42  & 1.06  & 0.10  & 0.10  & 0.08  & 0.11  & 0.03  & 0.20  \\
 $10^{4}$ & 8.54  & 1.69  & 37.59  & 4.33  & - & - & - & 4.97  & 3.79  & 2.48  & 5.08  & 3.99  & 2.41  & 4.88  \\
 $10^{3}$ & 19.20  & 4.03  & 144.48  & 72.65  & - & - & - & 6.92  & 7.40  & 5.82  & 11.90  & 7.49  & 5.65  & 12.21  \\
 $10^{2}$ & 29.92  & 6.63  & - & - & - & - & - & 8.50  & 10.20  & 9.08  & 18.22  & 10.59  & 9.16  & 20.04  \\
 $10^{1}$ & 41.29  & 8.95  & - & - & - & - & - & 10.11  & 12.93  & 12.54  & 24.20  & 12.52  & 12.43  & 26.54  \\
 $10^{0}$ & - & 11.78  & - & - & - & - & - & 14.99  & 21.38  & 31.35  & 45.85  & 13.64  & 16.29  & 33.48  \\
 $10^{-1}$ & - & 51.82  & - & - & - & - & - & 18.59  & 32.83  & 43.32  & 72.59  & 14.59  & 18.43  & 37.34  \\
 $10^{-2}$ & - & 82.25  & - & - & - & - & - & 19.48  & 36.66  & 51.46  & 89.86  & 15.52  & 19.19  & 38.63  \\
 $10^{-3}$ & - & 104.93  & - & - & - & - & - & 20.03  & 38.28  & 54.49  & 97.54  & 16.53  & 19.72  & 39.72  \\
 $10^{-4}$ & - & 113.03  & - & - & - & - & - & 20.56  & 39.45  & 55.86  & 100.07  & 17.49  & 20.25  & 40.86  \\
 $10^{-5}$ & - & 117.40  & - & - & - & - & - & 21.10  & 40.56  & 56.95  & 102.38  & 18.43  & 20.80  & 41.98  \\
 $10^{-6}$ & - & 120.75  & - & - & - & - & - & 21.61  & 41.53  & 57.76  & 104.36  & 19.35  & 21.30  & 43.15  \\
 $10^{-7}$ & - & 123.84  & - & - & - & - & - & 22.13  & 42.52  & 58.49  & 105.94  & 20.34  & 21.82  & 44.24  \\
 $10^{-8}$ & - & 126.26  & - & - & - & - & - & 22.60  & 43.55  & 59.24  & 107.37  & 21.25  & 22.36  & 45.36  \\
 \hline \hline 
 sec. & 30.76 & 274.79 & 7976.66 & 17.38 & 1926.26 & 17.36 & 22.94 & 8.90 & 13.13 & 13.86 & 85.21 & 46.66 & 37.94 & 528.10 \\ \hline 
\end{tabular}

}
\caption{Reached accuracy vs. number of FES/$n^2$ on $f_4$ with parameter $\ell=10^{7}$ in $n=20$ dimensions (mean over 31 independent runs). A dash `-' indicates that accuracy could not be reached within a budget of 200$n^2$ FES. See main text for further information.}
\label{tab:appendix_flatsphere}
\end{table}
\begin{table}
\hspace{-1.8cm}
\scalebox{\sfactor}{
\begin{tabular}{| p{0.6cm} | P{1.3cm} | P{1.3cm} | P{1.3cm} | P{1.3cm} | P{1.3cm} | P{1.3cm} | P{1.3cm} |   P{1.3cm} | P{1.3cm} | P{1.3cm} | P{1.3cm} | P{1.3cm} | P{1.3cm} | P{1.3cm} |} 
\hline 
acc. & RP \newline ES & NEWUOA & IMFIL & N-M & Pattern & Nesterov RG & Nesterov Acc. & CMA-ES & V-RP (corr) ES & V-RP (corr) Exact & V-RP (corr) matlab & V-RP (store) ES & V-RP (store) Exact & V-RP (store) matlab \\ 
\hline  \hline 
 $10^{1}$ & 0.07  & 0.06  & 0.23  & 0.42  & 3.16  & 0.08  & 0.07  & 0.24  & 0.19  & 0.11  & 0.16  & 0.21  & 0.10  & 0.18  \\
 $10^{0}$ & 0.35  & 0.15  & 0.66  & 5.10  & 10.06  & 0.45  & 0.44  & 0.59  & 1.13  & 0.64  & 1.14  & 1.14  & 0.57  & 1.02  \\
 $10^{-1}$ & 0.67  & 0.22  & 1.09  & 46.34  & 16.86  & 0.87  & 0.86  & 0.95  & 2.04  & 1.19  & 2.28  & 2.06  & 1.10  & 2.09  \\
 $10^{-2}$ & 0.99  & 0.30  & 1.51  & - & 23.58  & 1.35  & 1.29  & 1.30  & 2.98  & 1.73  & 3.38  & 3.02  & 1.64  & 3.14  \\
 $10^{-3}$ & 1.32  & 0.37  & 1.96  & - & 30.87  & 1.85  & 1.72  & 1.66  & 3.90  & 2.31  & 4.44  & 3.93  & 2.19  & 4.16  \\
 $10^{-4}$ & 1.64  & 0.44  & 2.35  & - & 37.27  & 2.35  & 2.17  & 2.03  & 4.88  & 2.90  & 5.61  & 4.81  & 2.71  & 5.26  \\
 $10^{-5}$ & 1.97  & 0.51  & 2.80  & - & 44.71  & 2.86  & 2.62  & 2.39  & 5.80  & 3.46  & 6.80  & 5.78  & 3.24  & 6.42  \\
 $10^{-6}$ & 2.32  & 0.58  & 3.23  & - & 52.74  & 3.37  & 3.05  & 2.76  & 6.75  & 3.99  & 7.94  & 6.69  & 3.79  & 7.64  \\
 $10^{-7}$ & 2.66  & 0.64  & 3.66  & - & 59.56  & 3.88  & 3.53  & 3.13  & 7.67  & 4.54  & 9.08  & 7.65  & 4.30  & 8.73  \\
 $10^{-8}$ & 3.02  & 0.70  & 4.08  & - & 66.35  & 4.40  & 3.96  & 3.51  & 8.63  & 5.08  & 10.28  & 8.57  & 4.87  & 9.89  \\
 \hline \hline 
 sec. & 1.15 & 0.26 & 28.37 & 16.94 & 1292.33 & 17.88 & NaN & 1.50 & 2.25 & 1.36 & 44.50 & 21.90 & 2.23 & 135.19 \\ \hline 
\end{tabular}

}
\caption{Reached accuracy vs. number of FES/$n^2$ on $f_1$ with parameter $\ell=10^{0}$ in $n=20$ dimensions (mean over 31 independent runs). A dash `-' indicates that accuracy could not be reached within a budget of 200$n^2$ FES. See main text for further information.}
\label{tab:appendix_linexp0}
\end{table}
\begin{table}
\hspace{-1.8cm}
\scalebox{\sfactor}{
\begin{tabular}{| p{0.6cm} | P{1.3cm} | P{1.3cm} | P{1.3cm} | P{1.3cm} | P{1.3cm} | P{1.3cm} | P{1.3cm} |   P{1.3cm} | P{1.3cm} | P{1.3cm} | P{1.3cm} | P{1.3cm} | P{1.3cm} | P{1.3cm} |} 
\hline 
acc. & RP \newline ES & NEWUOA & IMFIL & N-M & Pattern & Nesterov RG & Nesterov Acc. & CMA-ES & V-RP (corr) ES & V-RP (corr) Exact & V-RP (corr) matlab & V-RP (store) ES & V-RP (store) Exact & V-RP (store) matlab \\ 
\hline  \hline 
 $10^{1}$ & 0.23  & 0.08  & 0.42  & 1.42  & 12.38  & 0.73  & 0.49  & 0.43  & 0.73  & 0.43  & 0.72  & 0.75  & 0.30  & 0.49  \\
 $10^{0}$ & 0.54  & 0.20  & 0.98  & 22.41  & 20.49  & 2.00  & 1.20  & 0.79  & 1.79  & 1.14  & 2.10  & 1.86  & 0.90  & 1.49  \\
 $10^{-1}$ & 0.91  & 0.27  & 1.57  & - & 30.73  & 3.53  & 2.26  & 1.17  & 2.78  & 1.91  & 3.51  & 2.89  & 1.51  & 2.66  \\
 $10^{-2}$ & 1.27  & 0.36  & 2.17  & - & 41.19  & 5.26  & 3.42  & 1.54  & 3.79  & 2.60  & 4.88  & 3.83  & 2.08  & 3.77  \\
 $10^{-3}$ & 1.61  & 0.43  & 2.97  & - & 51.40  & 7.01  & 4.38  & 1.92  & 4.70  & 3.21  & 6.19  & 4.79  & 2.60  & 4.84  \\
 $10^{-4}$ & 1.97  & 0.51  & 3.93  & - & 61.55  & 8.84  & 5.39  & 2.30  & 5.68  & 3.83  & 7.47  & 5.71  & 3.12  & 5.98  \\
 $10^{-5}$ & 2.31  & 0.59  & 4.93  & - & 72.39  & 10.65  & 6.53  & 2.69  & 6.63  & 4.45  & 8.63  & 6.62  & 3.65  & 7.12  \\
 $10^{-6}$ & 2.63  & 0.67  & 5.91  & - & 82.77  & 12.55  & 7.58  & 3.07  & 7.59  & 5.01  & 9.78  & 7.56  & 4.18  & 8.19  \\
 $10^{-7}$ & 2.98  & 0.74  & 6.75  & - & 93.21  & 14.48  & 8.64  & 3.45  & 8.60  & 5.59  & 10.99  & 8.47  & 4.69  & 9.39  \\
 $10^{-8}$ & 3.35  & 0.81  & 7.39  & - & 103.61  & 16.42  & 9.77  & 3.83  & 9.53  & 6.14  & 12.15  & 9.35  & 5.20  & 10.48  \\
 \hline \hline 
 sec. & 1.09 & 0.27 & 42.87 & 16.93 & 1652.45 & 18.72 & 22.82 & 1.64 & 2.26 & 1.47 & 32.35 & 23.23 & 2.60 & 131.40 \\ \hline 
\end{tabular}

}
\caption{Reached accuracy vs. number of FES/$n^2$ on $f_1$ with parameter $\ell=10^{1}$ in $n=20$ dimensions (mean over 31 independent runs). A dash `-' indicates that accuracy could not be reached within a budget of 200$n^2$ FES. See main text for further information.}
\label{tab:appendix_linexp1}
\end{table}
\begin{table}
\hspace{-1.8cm}
\scalebox{\sfactor}{
\begin{tabular}{| p{0.6cm} | P{1.3cm} | P{1.3cm} | P{1.3cm} | P{1.3cm} | P{1.3cm} | P{1.3cm} | P{1.3cm} |   P{1.3cm} | P{1.3cm} | P{1.3cm} | P{1.3cm} | P{1.3cm} | P{1.3cm} | P{1.3cm} |} 
\hline 
acc. & RP \newline ES & NEWUOA & IMFIL & N-M & Pattern & Nesterov RG & Nesterov Acc. & CMA-ES & V-RP (corr) ES & V-RP (corr) Exact & V-RP (corr) matlab & V-RP (store) ES & V-RP (store) Exact & V-RP (store) matlab \\ 
\hline  \hline 
 $10^{2}$ & 0.10  & 0.06  & 0.53  & 0.61  & 4.41  & 0.55  & 0.35  & 0.35  & 0.35  & 0.13  & 0.28  & 0.33  & 0.12  & 0.19  \\
 $10^{1}$ & 0.54  & 0.21  & 1.25  & 5.11  & 25.12  & 4.13  & 2.46  & 0.90  & 1.82  & 1.18  & 2.24  & 1.83  & 0.83  & 1.42  \\
 $10^{0}$ & 1.42  & 0.44  & 2.14  & 65.80  & 50.68  & 14.13  & 5.55  & 1.51  & 3.40  & 2.53  & 4.77  & 3.31  & 1.85  & 3.53  \\
 $10^{-1}$ & 2.50  & 0.70  & 3.20  & - & 78.55  & 28.72  & 9.23  & 2.15  & 4.75  & 3.98  & 7.39  & 4.21  & 2.93  & 5.55  \\
 $10^{-2}$ & 3.68  & 0.94  & 4.47  & - & 108.86  & 45.79  & 12.95  & 2.83  & 5.88  & 5.17  & 9.69  & 5.12  & 3.91  & 7.26  \\
 $10^{-3}$ & 4.91  & 1.17  & 5.91  & - & - & 64.04  & 16.96  & 3.41  & 6.98  & 6.26  & 11.70  & 5.95  & 4.68  & 8.67  \\
 $10^{-4}$ & 6.17  & 1.40  & 7.03  & - & - & 83.27  & 20.92  & 3.99  & 8.07  & 7.22  & 13.33  & 6.94  & 5.37  & 10.00  \\
 $10^{-5}$ & 7.42  & 1.62  & 7.55  & - & - & 102.76  & 24.94  & 4.55  & 8.99  & 8.05  & 14.77  & 7.89  & 5.99  & 11.19  \\
 $10^{-6}$ & 8.72  & 1.83  & 7.83  & - & - & 122.68  & 29.09  & 5.05  & 9.96  & 8.80  & 16.18  & 8.78  & 6.56  & 12.37  \\
 $10^{-7}$ & 10.10  & 2.04  & 8.07  & - & - & 142.71  & 33.35  & 5.56  & 10.88  & 9.48  & 17.55  & 9.68  & 7.14  & 13.54  \\
 $10^{-8}$ & 11.50  & 2.22  & 8.25  & - & - & 162.88  & 37.54  & 6.06  & 11.80  & 10.12  & 18.85  & 10.61  & 7.70  & 14.62  \\
 \hline \hline 
 sec. & 3.37 & 0.50 & 45.49 & 16.30 & 1863.95 & 17.80 & 23.71 & 2.30 & 2.67 & 2.02 & 43.35 & 28.89 & 10.28 & 383.55 \\ \hline 
\end{tabular}

}
\caption{Reached accuracy vs. number of FES/$n^2$ on $f_1$ with parameter $\ell=10^{2}$ in $n=20$ dimensions (mean over 31 independent runs). A dash `-' indicates that accuracy could not be reached within a budget of 200$n^2$ FES. See main text for further information.}
\label{tab:appendix_linexp2}
\end{table}
\begin{table}
\hspace{-1.8cm}
\scalebox{\sfactor}{
\begin{tabular}{| p{0.6cm} | P{1.3cm} | P{1.3cm} | P{1.3cm} | P{1.3cm} | P{1.3cm} | P{1.3cm} | P{1.3cm} |   P{1.3cm} | P{1.3cm} | P{1.3cm} | P{1.3cm} | P{1.3cm} | P{1.3cm} | P{1.3cm} |} 
\hline 
acc. & RP \newline ES & NEWUOA & IMFIL & N-M & Pattern & Nesterov RG & Nesterov Acc. & CMA-ES & V-RP (corr) ES & V-RP (corr) Exact & V-RP (corr) matlab & V-RP (store) ES & V-RP (store) Exact & V-RP (store) matlab \\ 
\hline  \hline 
 $10^{3}$ & 0.03  & 0.06  & 0.56  & 0.33  & 0.56  & 0.27  & 0.22  & 0.14  & 0.12  & 0.05  & 0.08  & 0.10  & 0.05  & 0.07  \\
 $10^{2}$ & 0.38  & 0.17  & 2.30  & 1.95  & 27.54  & 3.63  & 3.17  & 0.97  & 1.27  & 0.81  & 1.52  & 1.17  & 0.58  & 0.97  \\
 $10^{1}$ & 2.12  & 0.61  & 4.64  & 28.07  & - & 27.79  & 13.60  & 2.27  & 3.81  & 3.15  & 4.99  & 3.20  & 2.26  & 4.06  \\
 $10^{0}$ & 6.65  & 1.38  & 6.51  & - & - & 115.70  & 24.95  & 3.76  & 6.45  & 6.36  & 10.71  & 4.15  & 4.80  & 8.52  \\
 $10^{-1}$ & 13.08  & 2.18  & 8.59  & - & - & - & 39.27  & 5.06  & 8.38  & 8.97  & 15.30  & 5.05  & 5.77  & 10.64  \\
 $10^{-2}$ & 20.44  & 2.90  & 9.11  & - & - & - & 51.70  & 6.25  & 9.92  & 10.84  & 18.93  & 6.02  & 6.32  & 11.60  \\
 $10^{-3}$ & 28.33  & 3.73  & 9.42  & - & - & - & 65.28  & 7.29  & 11.22  & 12.38  & 21.99  & 6.94  & 6.86  & 12.61  \\
 $10^{-4}$ & 36.81  & 4.42  & 9.66  & - & - & - & 79.90  & 8.30  & 12.31  & 13.62  & 24.44  & 7.87  & 7.36  & 13.78  \\
 $10^{-5}$ & 45.38  & 5.14  & 9.85  & - & - & - & 92.56  & 9.13  & 13.33  & 14.74  & 26.33  & 8.78  & 7.88  & 14.89  \\
 $10^{-6}$ & 54.13  & 5.70  & 10.13  & - & - & - & 107.68  & 9.89  & 14.24  & 15.71  & 28.09  & 9.74  & 8.36  & 15.99  \\
 $10^{-7}$ & 63.07  & 6.24  & 10.41  & - & - & - & 121.35  & 10.59  & 15.23  & 16.61  & 29.74  & 10.67  & 8.86  & 17.06  \\
 $10^{-8}$ & 72.09  & 6.74  & 10.60  & - & - & - & 138.25  & 11.20  & 16.15  & 17.42  & 31.22  & 11.60  & 9.38  & 18.21  \\
 \hline \hline 
 sec. & 23.01 & 1.61 & 65.92 & 18.51 & 1843.56 & 19.05 & 21.96 & 4.28 & 4.02 & 3.09 & 48.35 & 31.62 & 19.01 & 596.88 \\ \hline 
\end{tabular}

}
\caption{Reached accuracy vs. number of FES/$n^2$ on $f_1$ with parameter $\ell=10^{3}$ in $n=20$ dimensions (mean over 31 independent runs). A dash `-' indicates that accuracy could not be reached within a budget of 200$n^2$ FES. See main text for further information.}
\label{tab:appendix_linexp3}
\end{table}
\begin{table}
\hspace{-1.8cm}
\scalebox{\sfactor}{
\begin{tabular}{| p{0.6cm} | P{1.3cm} | P{1.3cm} | P{1.3cm} | P{1.3cm} | P{1.3cm} | P{1.3cm} | P{1.3cm} |   P{1.3cm} | P{1.3cm} | P{1.3cm} | P{1.3cm} | P{1.3cm} | P{1.3cm} | P{1.3cm} |} 
\hline 
acc. & RP \newline ES & NEWUOA & IMFIL & N-M & Pattern & Nesterov RG & Nesterov Acc. & CMA-ES & V-RP (corr) ES & V-RP (corr) Exact & V-RP (corr) matlab & V-RP (store) ES & V-RP (store) Exact & V-RP (store) matlab \\ 
\hline  \hline 
 $10^{4}$ & 0.01  & 0.03  & 0.64  & 0.23  & 0.01  & 0.14  & 0.11  & 0.02  & 0.04  & 0.02  & 0.03  & 0.04  & 0.02  & 0.04  \\
 $10^{3}$ & 0.23  & 0.12  & 1.81  & 1.13  & 7.74  & 2.64  & 2.84  & 0.93  & 0.84  & 0.49  & 0.67  & 0.81  & 0.30  & 0.61  \\
 $10^{2}$ & 2.00  & 0.50  & 7.75  & 5.61  & - & 26.29  & 31.08  & 2.97  & 2.95  & 2.32  & 3.83  & 3.06  & 1.66  & 3.36  \\
 $10^{1}$ & 11.98  & 1.80  & 58.45  & 71.09  & - & - & 69.18  & 6.13  & 7.71  & 7.08  & 12.08  & 4.28  & 5.48  & 9.98  \\
 $10^{0}$ & 38.13  & 4.36  & - & - & - & - & 111.78  & 8.69  & 11.54  & 12.30  & 23.04  & 5.15  & 6.22  & 11.36  \\
 $10^{-1}$ & 79.47  & 7.32  & - & - & - & - & 160.50  & 10.91  & 14.12  & 16.49  & 30.71  & 6.06  & 6.70  & 12.36  \\
 $10^{-2}$ & 127.32  & 9.78  & - & - & - & - & - & 12.92  & 15.83  & 19.27  & 36.45  & 6.92  & 7.21  & 13.37  \\
 $10^{-3}$ & - & 12.06  & - & - & - & - & - & 14.31  & 17.16  & 21.34  & 39.90  & 7.83  & 7.74  & 14.39  \\
 $10^{-4}$ & - & 14.10  & - & - & - & - & - & 15.63  & 18.35  & 23.05  & 42.88  & 8.72  & 8.23  & 15.54  \\
 $10^{-5}$ & - & 15.80  & - & - & - & - & - & 16.66  & 19.40  & 24.38  & 45.28  & 9.65  & 8.76  & 16.72  \\
 $10^{-6}$ & - & 17.25  & - & - & - & - & - & 17.42  & 20.40  & 25.50  & 47.26  & 10.54  & 9.30  & 17.83  \\
 $10^{-7}$ & - & 18.60  & - & - & - & - & - & 18.01  & 21.36  & 26.51  & 48.93  & 11.49  & 9.81  & 18.92  \\
 $10^{-8}$ & - & 20.03  & - & - & - & - & - & 18.56  & 22.31  & 27.41  & 50.54  & 12.39  & 10.32  & 20.06  \\
 \hline \hline 
 sec. & 35.37 & 5.24 & 8588.05 & 17.96 & 1853.12 & 17.40 & 21.82 & 7.07 & 5.21 & 5.08 & 58.95 & 29.18 & 23.94 & 624.25 \\ \hline 
\end{tabular}

}
\caption{Reached accuracy vs. number of FES/$n^2$ on $f_1$ with parameter $\ell=10^{4}$ in $n=20$ dimensions (mean over 31 independent runs). A dash `-' indicates that accuracy could not be reached within a budget of 200$n^2$ FES. See main text for further information.}
\label{tab:appendix_linexp4}
\end{table}
\begin{table}
\hspace{-1.8cm}
\scalebox{\sfactor}{
\begin{tabular}{| p{0.6cm} | P{1.3cm} | P{1.3cm} | P{1.3cm} | P{1.3cm} | P{1.3cm} | P{1.3cm} | P{1.3cm} |   P{1.3cm} | P{1.3cm} | P{1.3cm} | P{1.3cm} | P{1.3cm} | P{1.3cm} | P{1.3cm} |} 
\hline 
acc. & RP \newline ES & NEWUOA & IMFIL & N-M & Pattern & Nesterov RG & Nesterov Acc. & CMA-ES & V-RP (corr) ES & V-RP (corr) Exact & V-RP (corr) matlab & V-RP (store) ES & V-RP (store) Exact & V-RP (store) matlab \\ 
\hline  \hline 
 $10^{5}$ & 0.01  & 0.01  & 0.76  & 0.14  & 0.01  & 0.06  & 0.06  & 0.01  & 0.02  & 0.02  & 0.02  & 0.02  & 0.01  & 0.02  \\
 $10^{4}$ & 0.16  & 0.09  & 1.59  & 0.86  & 2.87  & 2.01  & 2.21  & 0.71  & 0.62  & 0.27  & 0.44  & 0.60  & 0.20  & 0.33  \\
 $10^{3}$ & 1.19  & 0.33  & 5.95  & 2.83  & 59.55  & 20.91  & 79.53  & 2.87  & 2.38  & 1.59  & 2.79  & 2.19  & 1.30  & 2.58  \\
 $10^{2}$ & 11.48  & 1.44  & 54.07  & 18.63  & - & - & - & 7.18  & 7.16  & 6.79  & 11.99  & 4.77  & 5.56  & 11.23  \\
 $10^{1}$ & 88.03  & 5.25  & - & - & - & - & - & 12.53  & 14.00  & 15.21  & 29.17  & 5.74  & 7.58  & 14.94  \\
 $10^{0}$ & - & 14.35  & - & - & - & - & - & 16.35  & 18.34  & 22.97  & 44.82  & 6.66  & 8.16  & 16.07  \\
 $10^{-1}$ & - & 22.55  & - & - & - & - & - & 19.91  & 21.17  & 27.41  & 53.49  & 7.57  & 8.68  & 17.21  \\
 $10^{-2}$ & - & 29.21  & - & - & - & - & - & 21.87  & 22.87  & 30.26  & 58.76  & 8.52  & 9.20  & 18.31  \\
 $10^{-3}$ & - & 34.72  & - & - & - & - & - & 23.54  & 24.24  & 32.46  & 62.10  & 9.45  & 9.75  & 19.39  \\
 $10^{-4}$ & - & 38.96  & - & - & - & - & - & 24.50  & 25.42  & 34.22  & 64.89  & 10.34  & 10.25  & 20.53  \\
 $10^{-5}$ & - & 42.51  & - & - & - & - & - & 25.06  & 26.47  & 35.58  & 67.16  & 11.22  & 10.75  & 21.68  \\
 $10^{-6}$ & - & 45.76  & - & - & - & - & - & 25.52  & 27.47  & 36.76  & 69.11  & 12.11  & 11.26  & 22.88  \\
 $10^{-7}$ & - & 48.49  & - & - & - & - & - & 26.03  & 28.42  & 37.84  & 70.90  & 13.05  & 11.76  & 24.03  \\
 $10^{-8}$ & - & 51.14  & - & - & - & - & - & 26.45  & 29.37  & 38.76  & 72.66  & 14.01  & 12.33  & 25.13  \\
 \hline \hline 
 sec. & 34.46 & 50.67 & 8273.24 & 16.28 & 1863.09 & 19.17 & 21.96 & 10.08 & 6.69 & 7.15 & 66.13 & 36.11 & 26.35 & 569.45 \\ \hline 
\end{tabular}

}
\caption{Reached accuracy vs. number of FES/$n^2$ on $f_1$ with parameter $\ell=10^{5}$ in $n=20$ dimensions (mean over 31 independent runs). A dash `-' indicates that accuracy could not be reached within a budget of 200$n^2$ FES. See main text for further information.}
\label{tab:appendix_linexp5}
\end{table}
\begin{table}
\hspace{-1.8cm}
\scalebox{\sfactor}{
\begin{tabular}{| p{0.6cm} | P{1.3cm} | P{1.3cm} | P{1.3cm} | P{1.3cm} | P{1.3cm} | P{1.3cm} | P{1.3cm} |   P{1.3cm} | P{1.3cm} | P{1.3cm} | P{1.3cm} | P{1.3cm} | P{1.3cm} | P{1.3cm} |} 
\hline 
acc. & RP \newline ES & NEWUOA & IMFIL & N-M & Pattern & Nesterov RG & Nesterov Acc. & CMA-ES & V-RP (corr) ES & V-RP (corr) Exact & V-RP (corr) matlab & V-RP (store) ES & V-RP (store) Exact & V-RP (store) matlab \\ 
\hline  \hline 
 $10^{6}$ & 0.01  & 0.01  & 0.87  & 0.04  & 0.01  & 0.01  & 0.01  & 0.01  & 0.01  & 0.01  & 0.02  & 0.02  & 0.01  & 0.02  \\
 $10^{5}$ & 0.10  & 0.08  & 1.57  & 0.65  & 2.37  & 1.76  & 1.57  & 0.45  & 0.60  & 0.19  & 0.26  & 0.56  & 0.11  & 0.23  \\
 $10^{4}$ & 0.77  & 0.27  & 4.51  & 1.89  & 31.90  & 17.38  & - & 2.20  & 1.93  & 1.07  & 2.26  & 1.98  & 0.80  & 1.79  \\
 $10^{3}$ & 8.45  & 1.15  & 42.37  & 5.98  & - & 170.48  & - & 6.29  & 6.40  & 4.41  & 10.98  & 5.08  & 4.26  & 10.04  \\
 $10^{2}$ & 79.40  & 5.11  & - & 35.85  & - & - & - & 13.25  & 13.16  & 12.54  & 33.67  & 7.20  & 9.16  & 20.72  \\
 $10^{1}$ & - & 19.53  & - & - & - & - & - & 21.68  & 20.56  & 24.92  & 58.73  & 8.21  & 10.67  & 24.05  \\
 $10^{0}$ & - & 37.01  & - & - & - & - & - & 26.47  & 25.09  & 33.63  & 74.31  & 9.09  & 11.38  & 25.24  \\
 $10^{-1}$ & - & 52.61  & - & - & - & - & - & 29.18  & 27.85  & 38.78  & 83.25  & 9.99  & 11.92  & 26.35  \\
 $10^{-2}$ & - & 65.85  & - & - & - & - & - & 31.09  & 29.72  & 41.81  & 88.14  & 10.87  & 12.41  & 27.41  \\
 $10^{-3}$ & - & 74.92  & - & - & - & - & - & 31.90  & 31.25  & 43.96  & 91.83  & 11.80  & 12.95  & 28.41  \\
 $10^{-4}$ & - & 82.32  & - & - & - & - & - & 32.64  & 32.40  & 45.62  & 94.67  & 12.70  & 13.45  & 29.53  \\
 $10^{-5}$ & - & 89.27  & - & - & - & - & - & 33.08  & 33.45  & 46.93  & 97.09  & 13.67  & 13.97  & 30.68  \\
 $10^{-6}$ & - & 95.14  & - & - & - & - & - & 33.48  & 34.41  & 48.12  & 99.09  & 14.60  & 14.49  & 31.81  \\
 $10^{-7}$ & - & 100.65  & - & - & - & - & - & 34.01  & 35.32  & 49.17  & 100.86  & 15.57  & 14.99  & 32.99  \\
 $10^{-8}$ & - & 105.19  & - & - & - & - & - & 34.41  & 36.34  & 50.05  & 102.39  & 16.51  & 15.52  & 34.21  \\
 \hline \hline 
 sec. & 30.90 & 194.63 & 7987.02 & 16.29 & 1888.71 & 17.42 & 22.54 & 12.99 & 8.47 & 9.21 & 76.31 & 40.34 & 29.91 & 547.15 \\ \hline 
\end{tabular}

}
\caption{Reached accuracy vs. number of FES/$n^2$ on $f_1$ with parameter $\ell=10^{6}$ in $n=20$ dimensions (mean over 31 independent runs). A dash `-' indicates that accuracy could not be reached within a budget of 200$n^2$ FES. See main text for further information.}
\label{tab:appendix_linexp6}
\end{table}
\begin{table}
\hspace{-1.8cm}
\scalebox{\sfactor}{
\begin{tabular}{| p{0.6cm} | P{1.3cm} | P{1.3cm} | P{1.3cm} | P{1.3cm} | P{1.3cm} | P{1.3cm} | P{1.3cm} |   P{1.3cm} | P{1.3cm} | P{1.3cm} | P{1.3cm} | P{1.3cm} | P{1.3cm} | P{1.3cm} |} 
\hline 
acc. & RP \newline ES & NEWUOA & IMFIL & N-M & Pattern & Nesterov RG & Nesterov Acc. & CMA-ES & V-RP (corr) ES & V-RP (corr) Exact & V-RP (corr) matlab & V-RP (store) ES & V-RP (store) Exact & V-RP (store) matlab \\ 
\hline  \hline 
 $10^{6}$ & 0.08  & 0.07  & 1.58  & 0.56  & 0.95  & 1.44  & 0.96  & 0.30  & 0.48  & 0.11  & 0.13  & 0.52  & 0.12  & 0.15  \\
 $10^{5}$ & 0.54  & 0.23  & 3.85  & 1.52  & 17.25  & 14.66  & - & 1.84  & 1.48  & 0.81  & 1.53  & 1.53  & 0.59  & 1.36  \\
 $10^{4}$ & 6.50  & 0.94  & 21.60  & 4.43  & - & 142.72  & - & 5.64  & 4.94  & 3.52  & 6.83  & 4.53  & 3.14  & 7.41  \\
 $10^{3}$ & 66.49  & 4.03  & - & 17.45  & - & - & - & 11.49  & 12.13  & 12.30  & 28.55  & 8.04  & 9.57  & 22.97  \\
 $10^{2}$ & - & 17.57  & - & 67.45  & - & - & - & 20.94  & 19.79  & 24.87  & 59.06  & 9.84  & 12.91  & 32.27  \\
 $10^{1}$ & - & 42.19  & - & - & - & - & - & 30.81  & 26.41  & 37.43  & 91.47  & 10.91  & 14.50  & 36.40  \\
 $10^{0}$ & - & 82.48  & - & - & - & - & - & 36.20  & 31.75  & 46.10  & 110.00  & 11.89  & 15.19  & 37.98  \\
 $10^{-1}$ & - & 113.01  & - & - & - & - & - & 38.59  & 35.23  & 50.36  & 120.59  & 12.81  & 15.72  & 39.09  \\
 $10^{-2}$ & - & 133.34  & - & - & - & - & - & 40.11  & 37.23  & 53.69  & 126.24  & 13.78  & 16.25  & 40.10  \\
 $10^{-3}$ & - & 149.80  & - & - & - & - & - & 40.80  & 38.63  & 56.18  & 129.87  & 14.74  & 16.73  & 41.09  \\
 $10^{-4}$ & - & 165.02  & - & - & - & - & - & 41.22  & 39.90  & 57.91  & 132.38  & 15.65  & 17.27  & 42.16  \\
 $10^{-5}$ & - & 177.24  & - & - & - & - & - & 41.62  & 40.92  & 59.22  & 134.52  & 16.56  & 17.81  & 43.24  \\
 $10^{-6}$ & - & - & - & - & - & - & - & 42.01  & 41.92  & 60.42  & 136.33  & 17.49  & 18.31  & 44.39  \\
 $10^{-7}$ & - & - & - & - & - & - & - & 42.41  & 42.92  & 61.56  & 137.85  & 18.43  & 18.88  & 45.66  \\
 $10^{-8}$ & - & - & - & - & - & - & - & 42.82  & 43.94  & 62.44  & 139.45  & 19.40  & 19.44  & 46.85  \\
 \hline \hline 
 sec. & 30.98 & 447.57 & 7705.72 & 17.72 & 1861.84 & 18.08 & 22.77 & 16.30 & 10.45 & 11.96 & 74.15 & 45.21 & 34.37 & 519.52 \\ \hline 
\end{tabular}

}
\caption{Reached accuracy vs. number of FES/$n^2$ on $f_1$ with parameter $\ell=10^{7}$ in $n=20$ dimensions (mean over 31 independent runs). A dash `-' indicates that accuracy could not be reached within a budget of 200$n^2$ FES. See main text for further information.}
\label{tab:appendix_linexp7}
\end{table}

\section{RHE: llustrative numerical example}
\label{sec:numericssec5}
We now illustrate the typical convergence behavior of Variable Metric Random Pursuit on the challenging convex quadratic function $f_3$, introduced in Section~\ref{sec:benchfun}. This function has two different scales that need to be learned. We use parameter $\ell = 10^7$. The ratio of largest to smallest eigenvalue of the Hessian (i.e. the condition number) is $10^7$, and the global minimum of $f_3$ is at $\xx^*= \0_n$ (where $\0_n$ is the all-zeros vector) with $f_3(\xx^*)=0$. We conduct 51 runs of V-RP in $n=20$ dimensions. The initial conditions are $\xx_0=(1, \dots, 1, 1/\sqrt{\ell}, \dots, 1/\sqrt{\ell})^T$, $B_0=\frac{1}{2} \ell \cdot I_n$. The two VM update schemes \texttt{updateHessCorr} and \texttt{updateHessStore} (see Fig.~\ref{fig:algonew}) are tested with the setting $\epsilon=1$ for both schemes. The \texttt{updateHessStore} scheme reuses samples from the storage $S$ in every $n$-th iteration. We here report the evolution of the mean, maximum,  and minimum function value vs. number of iterations (\#ITS). We also calculate and report the evolution of the derived convergence factor $\hat{\varrho}$ from Thm.~\ref{thm:global_strong}. 

\begin{figure}[ht]
\centering
\includegraphics[width=.95\textwidth]{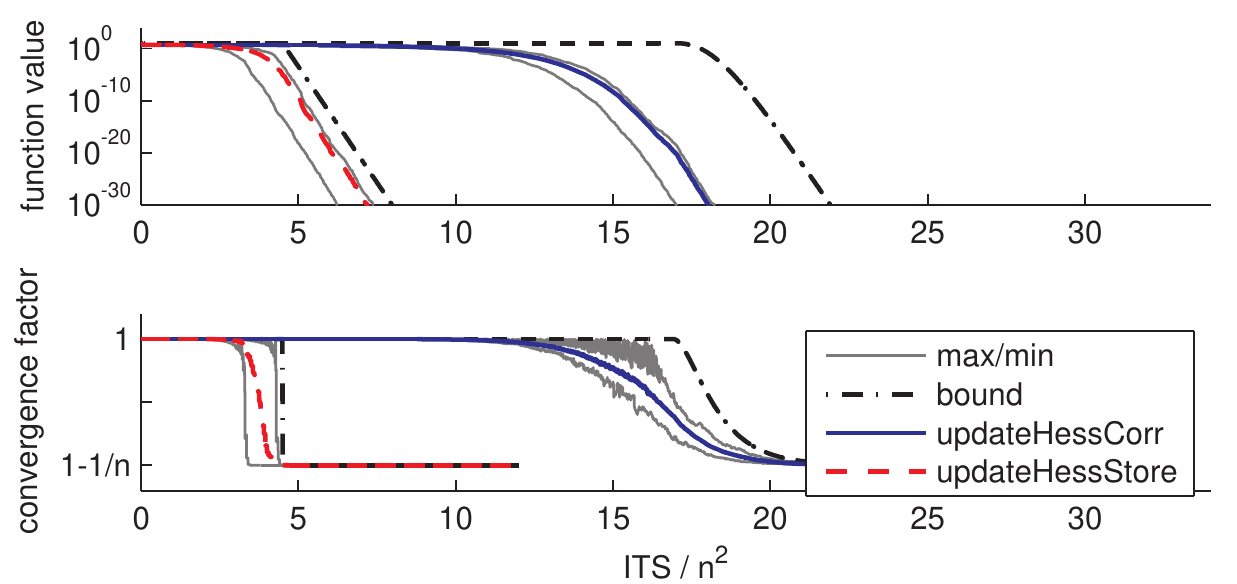}
\caption{Convergence of V-RP on $f_3$ for \texttt{updateHessCorr} (blue/solid) and \texttt{updateHessStore} (red/dashed).  Parameter $\ell = 10^7$ in $n=20$ dimensions. Upper panel:  Mean and max/min (grey) function values vs. \#ITS over 51 runs. Lower Panel: Mean and max/min (grey) convergence factor $\hat{\varrho}$ vs. \#ITS over 51 runs. Respective upper bounds (black/dash-dotted). See main text for further information.}
\label{fig:f1}
\end{figure}

On quadratic functions, a typical V-RP optimization trajectory (see ~\cite{Leventhal:2011,Stich:2012a} for several examples) shows three distinct phases of convergence in function values: (i) a first short phase of rapid improvement, (ii) a metric learning phase with only marginal progress in function decrease, and (iii) a final rapid decrease in function value. In the present experiments we chose the initial iterate $\xx_0$ such as to minimize the first phase. This allows a clearer quantification of the length of the adaptation phase. We see that the adaptation phase lasts for roughly 5$n^2$ iterations in case of \texttt{updateHessStore} and 15-18$n^2$ iterations for \texttt{updateHessCorr}. We also visualize the derived upper bounds on the convergence factor (see Remark~\ref{rem:markov}) in the lower panel of Fig.~\ref{fig:f1}. For \texttt{updateHessCorr} the curve is plotted using $b=1$, and for \texttt{updateHessStore} using $\tilde{c} = 1.5$. We see that the shape of both curves resembles the observed data. However, in both cases the theoretical bounds overestimate the empirically observed curves (``shifted'' to the right). In the upper panel of Fig.~\ref{fig:f1} we depict the theoretical derived upper bound on the function value (Thm.~\ref{thm:global_strong}). For \texttt{updateHessStore} the shape of this curve well matches the observed convergence. For \texttt{updateHessCorr} we see that the empirically observed phase transition between phase (ii) and (iii) occurs more smoothly than predicted by the theoretical bound.

\begin{figure}[ht]
\centering
\includegraphics[width=.95\textwidth]{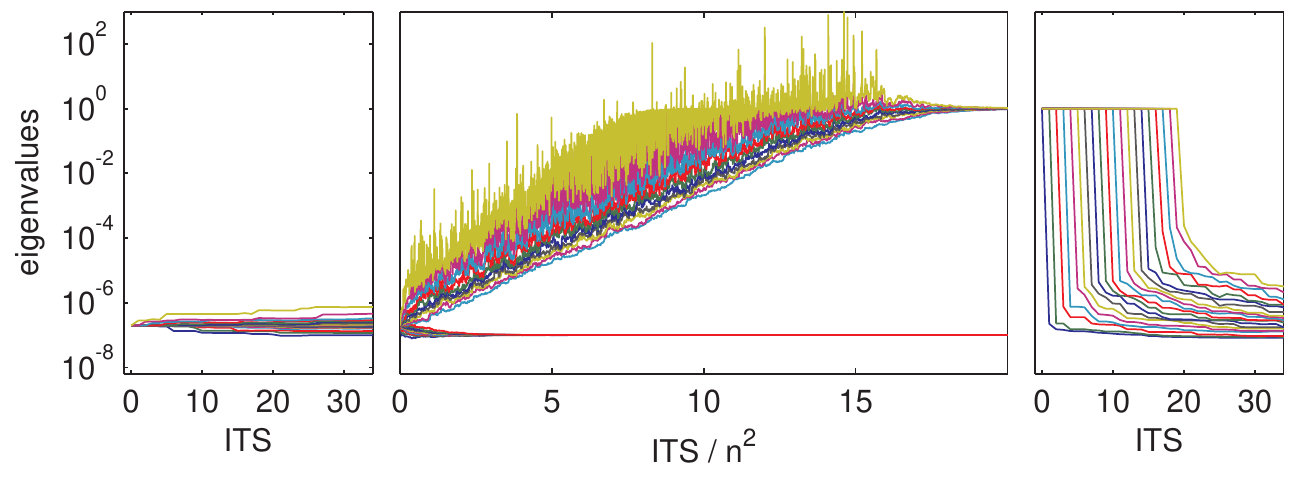}
\caption{Evolution of the spectrum of $\Sigma = B_k^{-1}$ for VM scheme \texttt{updateHessCorr} on $f_3$. Eigenvalues vs. \# ITS for 1 run. Parameter $\ell = 10^7$ in $n=20$ dimensions. Left two panels with initial setting $B_0 = \frac{\ell}{2} I_n$, right panel with $B_0 = I_n$. See main text for further information.}
\label{fig:f1hess}
\end{figure}

We also illustrate the evolution of the spectrum of the estimated inverse Hessian $\Sigma = B_k^{-1}$ in Fig.~\ref{fig:f1hess} for one run with update scheme \texttt{updateHessCorr}. At the beginning all eigenvalues are close to $\frac{2}{\ell}$, as $B_0^{-1} = \frac{2}{\ell} I_n$ (left panel). Then, about half of the eigenvalues start to increase up to 1, the other half decreases to $\frac{1}{\ell}$. We see that the large eigenvalues of $\Sigma$ (or correspondingly the small eigenvalues of $H$) are more difficult to approximate. This takes up to 16-18$n^2$ iterations. In the right panel we depicted another run with initial matrix $\Sigma = B_0^{-1} = I_n$. At the beginning all eigenvalues are equal to 1. Due to the nature of the VM update scheme (rank one updates), at most one eigenvalue can become different from 1 in every iteration. Thus it takes exactly $n=20$ iterations until all eigenvalues are between $10^{-7}$ and $10^{-5}$. From this moment, the situation is similar to the experiment in the left two panels with $B_0 = \frac{\ell}{2} I_n$.
\end{document}